\documentclass[11pt]{article}
\usepackage[a4paper,margin=1in]{geometry}
\usepackage{amsmath,amssymb,amsthm,mathtools,bm}
\usepackage{microtype}
\usepackage{enumitem}
\usepackage[colorlinks=true,linkcolor=blue,citecolor=blue,urlcolor=blue]{hyperref}
\usepackage[nameinlink,capitalize]{cleveref}

\newtheorem{theorem}{Theorem}[section]
\newtheorem{proposition}[theorem]{Proposition}
\newtheorem{lemma}[theorem]{Lemma}

\theoremstyle{remark}
\newtheorem{remark}[theorem]{Remark}

\newcommand{\R}{\mathbb R}
\newcommand{\C}{\mathbb C}
\newcommand{\N}{\mathbb N}
\newcommand{\Z}{\mathbb Z}
\newcommand{\T}{\mathbb T}

\newcommand{\Om}{\Omega}
\newcommand{\eps}{\varepsilon}
\newcommand{\ip}[2]{\left\langle #1,#2\right\rangle}
\newcommand{\norm}[1]{\left\lVert #1\right\rVert}
\newcommand{\abs}[1]{\left\lvert #1\right\rvert}
\newcommand{\dd}{\,\mathrm d}
\newcommand{\supp}{\operatorname{supp}}
\newcommand{\Lip}{\mathrm{Lip}}
\newcommand{\Id}{\mathrm{Id}}
\newcommand{\HS}{\mathrm{HS}}

\newcommand{\Span}{\operatorname{span}}

\title{Almost-sure quenched KAM tori for cubic NLS
with a spatial white-noise potential}
\author{
Yingdu Dong\thanks{Affiliation: School of Mathematical Sciences, Fudan University, Shanghai, China. Email: 202131130009@mail.bnu.edu.cn.}
\and
Wenwen Jian\thanks{Affiliation: School of Mathematics, Physics and Statistics,
Shanghai Polytechnic University, Shanghai, China. Email: wwjian@sspu.edu.cn. Funding: NNSFC(11201392).}
\and
Xiaoping Yuan\thanks{Affiliation: School of Mathematical Sciences, Fudan University, Shanghai, China. Email: xpyuan@fudan.edu.cn. Funding: NNSFC(12371189).}
}
\date{}

\begin{document}
\maketitle

\begin{abstract}
Let
\[
 A_\omega=-\partial_x^2+\rho\dot B_x(\omega),
 \qquad \rho\ne0,
\]
be the Dirichlet Schr\"odinger operator on $(0,\pi)$ with a spatial
Gaussian white-noise potential, realized pathwise via
quasi-derivatives.  Here $\dot B_x$ denotes the distributional derivative
of Brownian motion with respect to the spatial variable $x$.  For $\kappa\ne0$, we consider the cubic nonlinear
Schr\"odinger equation
\[
 i u_t=A_\omega u+\kappa |u|^2u.
\]
We first prove a zero-set theorem for locally real-analytic functions on
classical Wiener space.  As a consequence, on a single event of probability
one, no nontrivial finitely supported integer combination of the
random eigenvalues vanishes, and the quartic twist matrix is nonsingular for every
finite tangential set.  After fixing a path in this event, we combine these qualitative nondegeneracy properties with a partial quartic Birkhoff normal form adapted to the KAM scheme. For every finite nonempty tangential
set \(J\) of cardinality \(b\), we  obtain a Cantor family of
linearly stable, real-analytic, small-amplitude invariant \(b\)-tori. The
family is parametrized by a Cantor subset of \([\nu,2\nu]^b\) whose relative
measure tends to one as \(\nu\to0\).
\end{abstract}

\medskip
\noindent\textbf{Keywords.}
Hamiltonian PDE; KAM theory; nonlinear Schr\"odinger equation; white-noise potential; random Sturm--Liouville operator; Birkhoff normal form; Gaussian zero sets.

\medskip
\noindent\textbf{2020 Mathematics Subject Classification.}
37K55, 35Q55, 60H40, 34L40.

\tableofcontents

\section{Introduction}
We study the one-dimensional nonlinear Schr\"odinger equation
\begin{equation}\label{eq:NLS}
 i u_t=A_\omega u+\kappa |u|^2u,
 \qquad
 A_\omega=-\partial_x^2+\rho\dot B_x(\omega),
 \qquad 0<x<\pi,
\end{equation}
with Dirichlet boundary conditions.  Here $B=(B_x)_{0\le x\le\pi}$ is a standard real Brownian motion on a probability space $(\Omega,\mathcal F,\mathbb P)$, and $\rho,\kappa\in\R\setminus\{0\}$.  Throughout, $\dot B_x=\partial_x B_x$ denotes the derivative in the sense of distributions with respect to the spatial variable $x$ for each fixed $\omega$.  For every continuous Brownian path, the potential belongs to $H^{-1}(0,\pi)$, and hence the operator $A_\omega$ is interpreted through the primitive $\sigma_\omega=\rho B(\omega)$ and the quasi-derivative realization.

The operator $A_\omega$ describes a particle confined to a disordered
one-dimensional medium.  The kinetic term is $-\partial_x^2$, while the real potential represents spatial fluctuations of the medium.  Gaussian
white noise is a natural idealization when the correlation length of
these fluctuations is small compared with the observation scale.  As
discussed in \cite{DongJianYuan2026}, consider a centered stationary
random field $V$ with integrable covariance
$R(r)=\mathbb E[V(r)V(0)]$ and the rescaling
\[
 V_\epsilon(x)=\epsilon^{-1/2}V(x/\epsilon).
\]
Its covariance satisfies, in the distributional sense,
\[
 \mathbb E[V_\epsilon(x)V_\epsilon(y)]
 =\epsilon^{-1}R((x-y)/\epsilon)
 \longrightarrow \rho_{\mathrm{eff}}^2\delta(x-y),
 \qquad
 \rho_{\mathrm{eff}}^2=\int_\R R(r)\dd r.
\]
Under suitable mixing and moment assumptions, a functional central limit
theorem also gives
\[
 \int_0^x V_\epsilon(s)\dd s
 \Longrightarrow \rho_{\mathrm{eff}}B_x
\]
on compact intervals, and hence convergence of $V_\epsilon$ to spatial
white noise as a random distribution.  This scaling interpretation
motivates the choice $\rho\dot B$ in \eqref{eq:NLS}, with $\rho^2$ measuring the disorder strength.  On $(0,\pi)$ the potential is characterized by
\[
 \mathbb E\bigl[\langle\rho\dot B,f\rangle
                    \langle\rho\dot B,g\rangle\bigr]
 =\rho^2\int_0^\pi f(x)g(x)\dd x,
 \qquad f,g\in C_c^\infty(0,\pi;\R).
\]
Thus disorder is delta-correlated in space, and the Dirichlet boundary conditions describe confinement to the finite interval.  

The linear operator belongs to the one-dimensional continuous Anderson
model.  Early rigorous studies of white-noise Schr\"odinger operators include the work of Fukushima and Nakao~\cite{FukushimaNakao1977}; more recent work establishes
the construction and localization theory on the line
\cite{DumazLabbe2024}.  Here the interval is fixed, and the spectral
information needed for the nonlinear problem concerns arbitrarily high
Dirichlet modes.  Since Brownian paths are continuous, the primitive
$\sigma_\omega=\rho B(\omega)$ belongs to $L^2(0,\pi)$ almost surely.
The quasi-derivative theory for $H^{-1}$ potentials consequently gives a
pathwise self-adjoint realization of $A_\omega$ with compact resolvent and
form domain $H^1_0(0,\pi)$
\cite{SavchukShkalikov2003,HrynivMykytyuk2006,HrynivMykytyuk2012}.
The high-energy eigenvalue and eigenfunction estimates used below were
obtained in our preliminary work \cite{DongJianYuan2026}.

Infinite-dimensional KAM theory for Hamiltonian PDEs began in the late
1980s with Kuksin's persistence results for infinite-dimensional linear and
integrable Hamiltonian systems and their PDE applications
\cite{Kuksin1987,Kuksin1989Izvestiya,Kuksin1989Sbornik}.  In the early
parameter-dependent formulation, the tangential and normal frequencies vary
with a finite-dimensional external vector, and the values for which the
Melnikov conditions fail are removed from that parameter set; see also the
explicit vector-parameter formulation in \cite{Kuksin1989Parameter} and the
subsequent monographs and papers \cite{Kuksin1993,Poschel1996,Kuksin2000}.  For
one-dimensional Schr\"odinger and wave equations, parameters in the linear potential provided the frequency variation required by the KAM scheme.

Wayne's 1990 work on the one-dimensional Dirichlet nonlinear wave equation
provides a probabilistic formulation particularly relevant to the present problem
\cite{Wayne1990}.  Instead of using only a prescribed finite-dimensional
family, Wayne considered an infinite-dimensional class of admissible
$L^2$ potentials and, using inverse spectral coordinates, endowed that
class with a Gaussian probability measure.  The nonresonance conditions
needed for any fixed finite set of modes hold outside a null set of
potentials.  Thus a potential is selected probabilistically and then fixed
before the invariant torus is constructed.  This probabilistic selection of a fixed potential is also central to the quenched formulation of the present paper.  There remains an essential
difference: Wayne's probability measure is introduced through a regular
potential class and its inverse spectral parametrization, so that the
spectral variables themselves are the coordinates used to establish
genericity.  Here the law is prescribed a priori in physical space by
$V=\rho\dot B$, and the potential is a distribution rather than an
$L^2$ function.  The consequences of this distinction for spectral
nondegeneracy are described after the main theorem.

Subsequent developments in the one-dimensional theory concerned equations with a fixed linear part.  Kuksin and P\"oschel treated the cubic Dirichlet NLS
with the fixed constant potential $V(x)\equiv m$, for which the linear
frequencies are $j^2+m$ \cite{KuksinPoschel1996}.  Their Birkhoff normal form yields effective KAM parameters from the small tangential actions and provides the twist required for the Cantor family.  P\"oschel
obtained the analogous fixed-mass result for the one-dimensional Dirichlet
nonlinear wave equation \cite{PoschelWave1996}; a periodic-boundary version
of the one-dimensional NLS was later proved by Geng and You
\cite{GengYou2005}.  Further work considered prescribed spatially varying potentials in place of the constant potential.  Yuan proved the existence of
quasi-periodic solutions for the one-dimensional Dirichlet nonlinear wave
equation with a prescribed smooth nonvanishing potential
\cite{YuanWave2006}, and Du and Yuan constructed finite-dimensional
invariant tori, supported on sufficiently high modes, for the Dirichlet NLS
with a given analytic potential \cite{DuYuan2006}.  In these results, the potential has no adjustable parameters; the KAM parameters are supplied by the tangential actions.

The aforementioned studies concerned equations in one spatial dimension.
In higher dimensions, the unbounded multiplicities of the linear spectrum
and the associated resonant clusters create a different set of difficulties.
We briefly recall two representative approaches.  The first is the
Craig--Wayne--Bourgain (CWB) method.  It originated in the Newton scheme of
Craig and Wayne for nonlinear wave equations \cite{CraigWayne1993} and was
subsequently developed by Bourgain into a multiscale analysis of finite-volume
inverses of the linearized operator on the space--time Fourier lattice
\cite{Bourgain1994}.  This approach yielded periodic solutions for nonlinear
wave equations in higher spatial dimensions \cite{Bourgain1995HigherDim} and
quasi-periodic solutions for Hamiltonian perturbations of the two-dimensional
linear Schr\"odinger equation \cite{Bourgain1998TwoDim}; see also
\cite{Bourgain2005Green}.  An alternative approach retains the iterative structure
of infinite-dimensional KAM theory.  Eliasson and Kuksin introduced the
T\"oplitz--Lipschitz structure to obtain uniform estimates for the block-matrix
homological equations arising from spectral multiplicities and resonant
blocks \cite{EliassonKuksin2010}.  These higher-dimensional developments
address a resonance geometry different from that of the one-dimensional
fixed-potential problem studied here.

Random-potential nonlinear Schr\"odinger dynamics have also been studied in
the lattice setting.  In particular, Bourgain and Wang constructed
quasi-periodic solutions for a lattice NLS with an i.i.d. random potential
\cite{BourgainWang2008}.  Besides, temporal stochastic perturbations provide another setting for KAM
persistence. Zhang, Li, and Wang~\cite{ZhangLiWang2026} study the
persistence of low-dimensional invariant tori of deterministic
nonlinear Schr\"odinger dynamics under stochastic perturbations
in time. Their analysis uses Onsager--Machlup functionals to
characterize the most probable paths and combines large deviation
estimates with KAM theory. The resulting persistence is formulated
in terms of these most probable paths.
In the present paper, the noise instead enters through a
time-independent spatial potential. For almost every realization,
the resulting autonomous equation admits invariant tori carrying
quasi-periodic solutions. After the spatial potential has been fixed, the finite-dimensional KAM
parameters are supplied by the small tangential actions.  Thus the probability-one selection takes place on
Wiener space, whereas the Cantor-set exclusion and its measure estimate are
carried out in action space.  It is in this pathwise sense that the result
is quenched.

To state the main result, let
\[
 J=\{j_1,\dots,j_b\}\subset\N,
 \qquad b=\abs J,
 \qquad
 \Pi_\nu=[\nu,2\nu]^b,
 \qquad \nu>0.
\]
\begin{theorem}[Almost-sure quenched KAM tori]\label{thm:main}
Fix $\rho,\kappa\in\R\setminus\{0\}$.  There is a measurable event $\Omega_*\subset\Omega$ with $\mathbb P(\Omega_*)=1$, independent of $J$, such that the following holds.

For every $\omega\in\Omega_*$, every finite nonempty $J\subset\N$, and every $0<\beta<1/3$, there are constants
\[
 \nu_0=\nu_0(\omega,J,\beta)>0,
 \qquad
 C=C(\omega,J,\beta)>0,
\]
such that, for every $0<\nu<\nu_0$, there is a Cantor set
\[
 \Pi_\nu^*(\omega,J)\subset\Pi_\nu
\]
with
\begin{equation}\label{eq:relative-measure}
 \frac{\abs{\Pi_\nu\setminus\Pi_\nu^*(\omega,J)}}{\abs{\Pi_\nu}}
 \le C\nu^\beta.
\end{equation}
For every $\xi\in\Pi_\nu^*(\omega,J)$, equation~\eqref{eq:NLS} possesses a
real-analytic, linearly stable, $b$-dimensional invariant torus in
$H^1_0(0,\pi;\C)$.  It is a small deformation of the linear rotational torus
supported on $J$ with actions $\xi$.

The corresponding trajectories satisfy
\[
 u\in C(\R;H^1_0(0,\pi))\cap C^1(\R;H^{-1}(0,\pi))
\]
and solve \eqref{eq:NLS} in $H^{-1}$, equivalently in the mild sense.
\end{theorem}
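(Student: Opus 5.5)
The plan has four stages: build the probability-one event $\Omega_*$, perform a partial Birkhoff normal form for a fixed path, apply a KAM theorem in action space, and translate the tori back into solutions. It follows the abstract.

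\emph{Step 1: construction of $\Omega_*$.} For each fixed path, the pathwise quasi-derivative realization gives simple Dirichlet eigenvalues $\lambda_j(\omega)$ and real eigenfunctions $\phi_j(\omega)$. The high-energy estimates of \cite{DongJianYuan2026} give
\[
\lambda_j=j^2+c+o(1),
\]
possibly with $c$ depending on $\omega$, together with corresponding eigenfunction asymptotics. The key probabilistic input is a zero-set theorem on classical Wiener space: a locally real-analytic functional $F$ on (an open subset of) $C[0,\pi]$ that is not identically zero on any connected component vanishes only on a Wiener-null set. I would prove it by restricting $F$ to finite-dimensional affine Cameron--Martin slices $B+\sum_{k\le n}t_k e_k$. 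Conditional on the complementary Gaussian coordinates, $t\mapsto F$ is real-analytic and nonzero for $n$ large, so its zero set has Lebesgue measure zero, and Fubini concludes.

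I would then apply this to two countable families.
\begin{enumerate}[label=(\roman*)]
\item The functionals $\sum_k m_k\lambda_k(\omega)$, for each finitely supported $m\in\Z^{(\N)}\setminus\{0\}$.
\item The twist determinants $\det\bigl(\kappa\int_0^\pi\phi_i^2\phi_j^2\,\dd x\,(2-\delta_{ij})\bigr)_{i,j\in J}$, for each finite $J$.
\end{enumerate}
Eigenvalues and normalized eigenfunctions depend analytically on $\sigma=\rho B$ in $L^2$ by Kato perturbation theory of simple eigenvalues, so both families consist of locally real-analytic functionals. Nontriviality comes from exhibiting one Cameron--Martin shift where each functional is nonzero. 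For (i) I would use the derivative $\partial\lambda_k/\partial\sigma=-(\phi_k^2)'$ and the linear independence of $\{\phi_k^2\}$. For (ii) I would compare with constant potentials, where the Kuksin--P\"oschel matrix is explicitly nonsingular. Intersecting over the countably many conditions gives $\Omega_*$, which is independent of $J$.

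\emph{Steps 2 and 3: normal form and KAM for a fixed path.} Fix $\omega\in\Omega_*$ and $J$. Write the Hamiltonian in the coordinates $u=\sum q_j\phi_j$. The quartic term has coefficients $\kappa\int\phi_i\phi_j\phi_k\phi_l$, and these satisfy the decay and T\"oplitz-type estimates from the eigenfunction asymptotics. I would perform a partial Birkhoff normal form that removes only the quartic terms needed by the KAM scheme: those with at most two normal modes that are not of the form $|q_i|^2|q_j|^2$. The small divisors $\lambda_i\pm\lambda_j\pm\lambda_k\pm\lambda_l$ are then handled in two ways. When they involve at most two normal indices, the asymptotics $\lambda_j\approx j^2$ separate them from zero for large normal modes. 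Finitely many remaining combinations are nonzero by (i), and since the path is fixed this yields a positive constant depending on $\omega$ and $J$.

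After the rescaling $I=\nu\xi$, the frequencies take the form
\[
\omega(\xi)=\lambda_J+\nu M\xi,\qquad \Omega_n(\xi)=\lambda_n+\nu\langle m_n,\xi\rangle,
\]
and $M$ is invertible by (ii). This produces a parameter-dependent family of the type required by P\"oschel's KAM theorem \cite{Poschel1996}. Its Melnikov conditions are
\[
\langle k,\omega\rangle\ne0,\qquad \langle k,\omega\rangle+\Omega_n\ne0,\qquad \langle k,\omega\rangle+\Omega_n\pm\Omega_m\ne0.
\]
The twist makes $\xi\mapsto\omega(\xi)$ a diffeomorphism, so each resonance zone has measure $\lesssim\gamma|k|^{-\tau}\nu^b$. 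Finitely many low-$|k|$ resonances are harmless because (i) excludes exact identities, and the asymptotics dispose of large $n,m$. The perturbation has size $\nu^{3/2}$ relative to $\gamma=\nu^{1+\beta'}$. Summing the resonance zones gives a relative measure loss $C\nu^\beta$ for every $\beta<1/3$.

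\emph{Step 4: regularity of the solutions.} KAM yields linearly stable, real-analytic tori in a weighted $\ell^2$ phase space, which I would identify with $H^1_0$ via the form domain. The regularity $u\in C(\R;H^1_0)\cap C^1(\R;H^{-1})$ follows because $A_\omega:H^1_0\to H^{-1}$ is bounded and the cubic term maps $H^1_0$ to itself.

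I expect the main obstacle to be Step 3's uniform control of the second Melnikov conditions $\langle k,\omega\rangle+\Omega_n-\Omega_m$ over infinitely many pairs $(n,m)$. This needs the asymptotic difference structure $\Omega_n-\Omega_m=n^2-m^2+O(1/n)$, uniform in $\xi$, together with Lipschitz estimates of the normal-form shifts $m_n$. The rough potential only yields weak remainder decay, and I would first try to extract it from the $H^{-1}$ eigenvalue asymptotics of \cite{DongJianYuan2026}.
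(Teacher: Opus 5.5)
Your overall architecture matches the paper's: a Wiener-space zero-set theorem applied to integer spectral combinations and to the twist determinants, nontriviality checked at the free or constant-potential path, a partial quartic normal form of normal degree at most two, and P\"oschel's theorem in action space. The first genuine gap is the spectral input to your KAM step. The estimate you single out as the main obstacle is false. For $V=\rho\dot B$ one does not have $\lambda_n=n^2+c+o(1)$. The first-order correction is $\frac\rho\pi\bigl(B_\pi-\int_0^\pi\cos(2nx)\dd B_x\bigr)$, and the Wiener integrals $\int_0^\pi\cos(2nx)\dd B_x$ are independent $N(0,\pi/2)$ variables. So $\lambda_n-n^2$ has order-one random fluctuations and no limit, and $\lambda_{n+1}-\lambda_n-(2n+1)$ is not $O(1/n)$. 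Hence $\Omega_n-\Omega_m=n^2-m^2+O(1/n)$ cannot be extracted from any asymptotics.

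What you are missing is that this estimate is not needed. P\"oschel's hypothesis (P2) with $d=2$ only asks that $\Omega-\bar\Omega$ be Lipschitz into $\ell^\infty_{-\delta}$ for some $\delta<d-1=1$. The available estimate $\lambda_n=n^2+O_{\omega,\eps}(n^\eps)$ gives exactly this, with $\bar\Omega_r=n_r^2$ after reindexing $\mathcal N$ and with $B_n$ uniformly bounded. The infinitely many second Melnikov conditions are then handled inside P\"oschel's Theorem~B, because $|n^2-m^2|\ge\max(n,m)$ dominates the $O(\max(n,m)^\eps)$ errors. Consequently only pairs with $\max(n,m)\lesssim|k|$ can produce resonance zones. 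The normal form needs the same growth, not mere separation from zero. The lower bounds $c(1+n^2+m^2)$ and $c(1+|n^2-m^2|)$ are what make the divided normal-degree-two matrices Hilbert--Schmidt after the $h^1$ weighting. T\"oplitz-type coefficient estimates are neither available for these rough eigenfunctions nor required.

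The second gap is the exponent $1/3$, which you assert rather than derive; your own numbers would give something else. The normal-degree-three quartic terms stay in the perturbation. In P\"oschel's scaled norm on $D(s,r)$ they have size about $\sqrt\nu\,r$. The term $\frac12\ip{A_Jy}{y}$ contributes $r^2$, and the action component of the sextic remainder contributes $\nu^3/r^2$. These bounds cannot all be $O(\nu^{3/2})$: the requirement $\sqrt\nu\,r\le\nu^{3/2}$ forces $r\le\nu$, and then $\nu^3/r^2\ge\nu$. The balance $r=\nu^{1/2+\sigma}$ gives $\nu^{1+\sigma}+\nu^{2-2\sigma}$, which is at best $\nu^{4/3}$.

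The measure count is also off. On $[\nu,2\nu]^b$ the total excluded measure is $\lesssim(\operatorname{diam}\Pi_\nu)^{b-1}\alpha$, not $\gamma\nu^b$ per zone, so the relative loss is $\alpha/\nu$. With $\alpha=\nu^{1+\beta}$, the smallness condition is therefore what forces $\beta<\sigma<1/3$. You must also check the Lipschitz part of P\"oschel's smallness condition. There each $\xi$-derivative of a tangential amplitude costs a factor $\nu^{-1}$ relative to the amplitude, giving $\nu^\sigma$ and $\nu^{1-2\sigma}$. Finally, you need the measure constants to be uniform as the box shrinks, or, in your rescaled form, you must track their dependence on $L\sim\nu^{-1}$. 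Your remark that exact nonresonance disposes of the finitely many low-$|k|$ labels is the right mechanism for that uniformity.
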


\begin{remark}[Random selection of the potential and the tangential set]
The Brownian path provides an infinite-dimensional external parameter
at the stage of selecting the potential.  Excluding a single event of
probability zero ensures both spectral nonresonance and nonsingularity
of the twist for every finite tangential set,.  After
this selection the path is fixed, and the KAM parameters are the
tangential actions.  Thus the freedom to choose any finite nonempty
$J$ relies on an almost-sure statement about the potential, rather than
a statement for every prescribed potential.  In particular, no
high-mode restriction on $J$ is needed, unlike the prescribed-potential
NLS construction of Du and Yuan~\cite{DuYuan2006}.  The admissible
amplitude threshold still depends on the path and on $J$.
\end{remark}

Two features of the proof deserve emphasis.  The first is qualitative
nondegeneracy in the Brownian variable.  Unlike Wayne's spectral-coordinate
construction, the eigenvalues and eigenfunctions here are correlated
nonlinear functionals of a single Brownian path, so independence among the
spectral data is unavailable.  For every nonzero finitely supported integer
sequence $c=(c_n)$, the map
\[
 B\longmapsto \sum_n c_n\lambda_n(B)
\]
is locally real analytic on Wiener path space and is not identically zero.
An analytic zero-set theorem for Wiener measure therefore implies that its
zero set is null.  Applying the same argument to the twist determinants
yields the simultaneous nondegeneracy described above.  This use of
analyticity supplies no deterministic lower bounds.  Standard
facts about Gaussian and Wiener measures used in this argument may be found
in~\cite{Bogachev1998,Nualart2006}.

The second issue is the normal-form reduction.  A direct attempt to divide
every nontrivial quartic coefficient would require lower bounds for
\[
 \lambda_i+\lambda_j-\lambda_k-\lambda_l
\]
with four unrestricted large indices.  The leading quadratic terms can cancel because of arithmetic
resonances of sums of two squares; for example,
\[
 j^2+(8j)^2=(4j)^2+(7j)^2.
\]
For such quartets, the available asymptotics leave a difference of
spectral errors and provide no lower bound for it.  They therefore do
not suffice for a full quartic Birkhoff normal form.  The KAM construction
used here requires fewer divisors.  P\"oschel's homological equations involve
the Taylor terms of weighted degree at most two, assigning weight two to
action deviations and weight one to normal variables.  Such terms have
normal degree at most two.  We therefore normalize the quartic terms
of normal degree at most two.  Every divisor inverted in this construction contains at most two large normal indices.  The
high-energy growth of the spectrum gives pathwise lower bounds for these
divisors, while weighted matrix estimates show that the resulting generator
is analytic on the energy space.  Terms of normal degree $3$ and $4$ remain
in the perturbation. 
For general background on Birkhoff normal forms for Hamiltonian PDEs, see
\cite{Bambusi2003,BambusiGrebert2006,Grebert2007}.

The rest of this paper is organized as follows.  Section~2 collects the pathwise operator theory, the spectral estimates, and the analytic dependence of the spectral data on the Brownian primitive.  Section~3 proves the
Wiener-space zero-set theorem and derives the almost-sure nonresonance and twist properties.  Section~4 contains the main normal-form argument, including the two-normal divisor and divided-matrix estimates.  Section~5
verifies the hypotheses of the KAM theorem and proves the main result. The scope and the limitations of the argument are collected in the final discussion.

\section{Preliminaries}
We collect the operator-theoretic and spectral results used in the proof of the main theorem.

\subsection{The white-noise operator and its energy space}\label{sec:energy}
For a real $\sigma\in L^2(0,\pi)$ define
\[
 y^{[1]}=y'-\sigma y
\]
and
\[
 \ell_\sigma y=-(y^{[1]})'-\sigma y^{[1]}-\sigma^2y.
\]
The corresponding Dirichlet realization has domain
\[
 \operatorname{Dom}(A_\sigma)
 =\left\{
 y\in L^2:
 y,y^{[1]}\in AC[0,\pi],\ \ell_\sigma y\in L^2,
 \ y(0)=y(\pi)=0
 \right\}.
\]
On this domain, $A_\sigma y=\ell_\sigma y$.  Its closed form has domain
$H^1_0(0,\pi)$ and is given by
\begin{equation}\label{eq:form}
 \mathfrak a_\sigma[u,v]
 =\int_0^\pi u'\overline{v'}\dd x
 -\int_0^\pi \sigma(x)
 \bigl(u'\overline v+u\overline{v'}\bigr)\dd x.
\end{equation}
Formula~\eqref{eq:form} is obtained by interpreting $\sigma'$ distributionally
and integrating once by parts.  It is meaningful for $\sigma\in L^2$ because
$H^1_0(0,\pi)\hookrightarrow L^\infty(0,\pi)$.  The standard theory of
Sturm--Liouville operators with distributional potentials gives
self-adjointness, lower semiboundedness, compact resolvent, simplicity of
the real Dirichlet spectrum, and Sturm oscillation; see
\cite{SavchukShkalikov2003,HrynivMykytyuk2006,HrynivMykytyuk2012}.  When
$\sigma$ is smooth, this realization agrees with the classical operator
$-\partial_x^2+\sigma'$.

For a Brownian path set
\[
 \sigma_\omega=\rho B(\omega),
 \qquad A_\omega=A_{\sigma_\omega}.
\]
Almost surely $B\in C_0([0,\pi])\subset L^2(0,\pi)$.  We enumerate the simple eigenvalues in increasing order,
\[
 \lambda_1(\omega)<\lambda_2(\omega)<\cdots\longrightarrow+\infty,
\]
and choose real normalized eigenfunctions
\[
 A_\omega\phi_n=\lambda_n\phi_n,
 \qquad \norm{\phi_n}_{L^2}=1.
\]
The signs of the real normalized eigenfunctions may be chosen arbitrarily
and are then fixed.  The eigenvalue and twist functionals are independent
of these choices.  The general quartic coefficients also involve odd
powers of individual eigenfunctions; changing $\phi_n$ to $-\phi_n$ is
accompanied by the coordinate change $q_n\mapsto-q_n$, leaving the
physical Hamiltonian unchanged.

The form domain also gives the phase space used below.  Indeed, the
one-dimensional Gagliardo--Nirenberg inequality
\(\norm u_{L^\infty}^2\le \norm u_{L^2}\norm{u'}_{L^2}\)
and Young's inequality give, for every $\eps>0$,
\[
\begin{aligned}
\left|\mathfrak a_\sigma[u,u]-\norm{u'}_{L^2}^2\right|
&\le
2\|\sigma\|_{L^2}
\|u'\|_{L^2}
\|u\|_{L^\infty}\\
&\le
2\|\sigma\|_{L^2}
\|u'\|_{L^2}^{3/2}
\|u\|_{L^2}^{1/2}\\
&\le
\eps\|u'\|_{L^2}^2
+C_{\eps}\|\sigma\|_{L^2}^4
\|u\|_{L^2}^2,
\end{aligned}
\]
Here the form perturbation is $-2\operatorname{Re}\int_0^\pi
\sigma u'\overline u\dd x$.  For fixed real $\sigma\in L^2$, choosing
$\eps=1/2$ gives a constant $K_\sigma\ge0$ such that
\[
 \tfrac12\norm{u'}_{L^2}^2-K_\sigma\norm u_{L^2}^2
 \le\mathfrak a_\sigma[u,u]
 \le\tfrac32\norm{u'}_{L^2}^2+K_\sigma\norm u_{L^2}^2.
\]
Since $\norm u_{L^2}\le\norm{u'}_{L^2}$ on $H^1_0(0,\pi)$,
any $\mu_\sigma\ge\max(2,K_\sigma+3/2)$ then satisfies
\begin{equation}\label{eq:form-equivalence}
 \mu_\sigma^{-1}\norm u_{H^1}^2
 \le \mathfrak a_\sigma[u,u]+\mu_\sigma\norm u_{L^2}^2
 \le \mu_\sigma\norm u_{H^1}^2.
\end{equation}
In particular, $A_\sigma+\mu_\sigma$ is strictly positive, and
$\operatorname{Dom}((A_\sigma+\mu_\sigma)^{1/2})=H^1_0(0,\pi)$,
with equivalent norms.
If $u=\sum q_n\phi_n$, the spectral theorem for the closed form gives
\begin{equation}\label{eq:spectral-form-norm}
 \norm u_{H^1}^2\asymp_\omega
 \sum_{n\ge1}(\mu_\omega+\lambda_n)\abs{q_n}^2.
\end{equation}
The high-energy asymptotics below imply equivalence with
\[
 \norm q_1^2:=\sum_{n\ge1}\langle n\rangle^2\abs{q_n}^2,
 \qquad \langle n\rangle=(1+n^2)^{1/2}.
\]
We write
\[
 h^1=\left\{q:\norm q_1<\infty\right\}
\]
and identify it with the pathwise energy space $h^1_\omega\simeq H^1_0(0,\pi)$.  Equivalence constants may depend on the path.

\subsection{High-energy spectral estimates}
The following are the probabilistic high-energy estimates used
in the normal-form and KAM arguments.  They follow from the stochastic
Pr\"ufer analysis in~\cite{DongJianYuan2026}.

\begin{proposition}[Pathwise spectral estimates]\label{prop:spectral-estimates}
There is an event $\Omega_{\mathrm{sp}}$ of probability one such that, for every $\omega\in\Omega_{\mathrm{sp}}$ and every $\eps>0$,
\begin{equation}\label{eq:eig-asymptotic}
 \lambda_n(\omega)=n^2+O_{\omega,\eps}(n^\eps),
 \qquad n\to\infty,
\end{equation}
and
\begin{equation}\label{eq:eigfun-uniform}
 \sup_{n\ge1}\norm{\phi_n(\omega)}_{L^\infty(0,\pi)}<\infty.
\end{equation}
In particular, for every fixed $0<\eps<1$,
\begin{equation}\label{eq:gap-asymptotic}
 \lambda_{n+1}-\lambda_n=2n+1+O_{\omega,\eps}(n^\eps),
\end{equation}
and there is $N_\omega$ such that
\begin{equation}\label{eq:rough-growth}
 \tfrac12 n^2\le\lambda_n\le2n^2,
 \qquad
 \lambda_{n+1}-\lambda_n\ge n,
 \qquad n\ge N_\omega.
\end{equation}
\end{proposition}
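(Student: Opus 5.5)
The two primary estimates \eqref{eq:eig-asymptotic} and \eqref{eq:eigfun-uniform} are imported from the stochastic Pr\"ufer analysis of \cite{DongJianYuan2026}. The plan is to recall how they arise there, and then derive \eqref{eq:gap-asymptotic} and \eqref{eq:rough-growth} as elementary consequences on the same event.

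\emph{Primary estimates.} Pass to the quasi-derivative Pr\"ufer variables: write $\lambda=k^2$ and
\[
 y=r\sin\theta,\qquad y^{[1]}=kr\cos\theta .
\]
Since $\sigma$ enters the first-order system for $(y,y^{[1]})$ without derivatives, the phase equation becomes
\[
 \theta'=k+\frac{\sigma}{2}\,\bigl(\text{trigonometric terms in }2\theta\bigr)+O\!\left(\frac{\sigma^2}{k}\right).
\]
The Dirichlet condition for the $n$-th eigenvalue reads $\theta(\pi)=n\pi$. Hence
\[
 \sqrt{\lambda_n}=n+\frac1\pi\int_0^\pi(\text{oscillatory integrals of }\sigma=\rho B\text{ against }\sin 2\theta,\cos 2\theta)+\cdots .
\]
These integrals are controlled using the H\"older regularity of $B$: integrating by parts against $e^{2ikx}$ with $B\in C^{1/2-}$ gives decay $k^{-1/2+}$. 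Squaring, the correction to $n^2$ is $O(n^{1/2+})$, which is too weak. The finer almost-sure bound $O(n^\eps)$ must therefore come from the stochastic analysis, and I expect this to be the main obstacle. One route is to treat the oscillatory Wiener integrals $\int_0^\pi \sin(2nx)\dd B_x$, which are Gaussian with bounded variance, together with the nonlinear remainders. Moment bounds of polynomial order combined with Borel--Cantelli over $n$ then give a $\sqrt{\log n}$-type bound, hence $O_{\omega,\eps}(n^\eps)$ for every $\eps$, on a single full-measure event obtained by intersecting over rational $\eps$. The amplitude equation $(\log r)'=O(\sigma)+\cdots$ has the same oscillatory structure, and Gr\"onwall with the normalization $\norm{\phi_n}_{L^2}=1$ gives $\sup_n\norm{\phi_n}_{L^\infty}<\infty$. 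Finitely many low modes are bounded by continuity and compactness. I would cite these as in \cite{DongJianYuan2026} and define $\Omega_{\mathrm{sp}}$ as the intersection of the relevant full-measure events.

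\emph{Gap asymptotics \eqref{eq:gap-asymptotic}.} Fix $\omega\in\Omega_{\mathrm{sp}}$ and $0<\eps<1$. Applying \eqref{eq:eig-asymptotic} at $n$ and $n+1$ gives
\[
 \lambda_{n+1}-\lambda_n=(n+1)^2-n^2+O_{\omega,\eps}(n^\eps)+O_{\omega,\eps}\bigl((n+1)^\eps\bigr)=2n+1+O_{\omega,\eps}(n^\eps).
\]

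\emph{Rough growth \eqref{eq:rough-growth}.} Take $\eps=1/2$ and let $C_\omega$ be the implied constant, so that
\[
 \abs{\lambda_n-n^2}\le C_\omega n^{1/2}\qquad\text{and}\qquad \abs{\lambda_{n+1}-\lambda_n-2n-1}\le C_\omega n^{1/2}
\]
for all $n$ beyond some threshold. Choose $N_\omega$ beyond that threshold and with $C_\omega n^{1/2}\le n^2/2$ for all $n\ge N_\omega$. Then for $n\ge N_\omega$:
\begin{itemize}[nosep]
\item $\tfrac12 n^2\le\lambda_n\le\tfrac32 n^2\le 2n^2$;
\item $\lambda_{n+1}-\lambda_n\ge 2n+1-C_\omega n^{1/2}\ge n$, since $C_\omega n^{1/2}\le n^2/2$ already forces $C_\omega n^{1/2}\le n+1$ once $n\ge2$.
\end{itemize}
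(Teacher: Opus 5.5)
Your proposal follows the paper's route: the paper also imports \eqref{eq:eig-asymptotic} and \eqref{eq:eigfun-uniform} directly from the stochastic Pr\"ufer analysis of \cite{DongJianYuan2026}, intersects the full-probability events over rational $\eps$ to obtain the single event $\Omega_{\mathrm{sp}}$, and treats \eqref{eq:gap-asymptotic} and \eqref{eq:rough-growth} as immediate consequences, as you do. One small slip in your last bullet: $C_\omega n^{1/2}\le n^2/2$ does not imply $C_\omega n^{1/2}\le n+1$ (take $C_\omega=n=10$), so also require $N_\omega\ge C_\omega^2$, which gives $\lambda_{n+1}-\lambda_n\ge 2n+1-C_\omega n^{1/2}\ge n+1$ for $n\ge N_\omega$.
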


\begin{remark}
The result cited in \cref{prop:spectral-estimates} is stronger: on an interval of length $L$ it gives a stochastic first correction to $\sqrt{\lambda_n}$ and a uniform first-order eigenfunction expansion.  For the present KAM argument only \eqref{eq:eig-asymptotic} and \eqref{eq:eigfun-uniform} are used.  Since the almost-sure estimate is initially stated separately for each $\eps>0$, intersecting the full-probability events for positive rational $\eps$ produces the single event $\Omega_{\mathrm{sp}}$ used here.
\end{remark}

\subsection{Analyticity with respect to the primitive}
 We establish the analytic dependence of the spectral data on the Brownian primitive. We use the standard Banach-space meaning of analyticity: near every point the function is represented by a norm-convergent series of continuous homogeneous polynomials. Let
\[
 E=C_0([0,\pi];\R)
 =\{B\in C([0,\pi];\R):B(0)=0\}
\]
with the uniform norm, and write $A(B)=A_{\rho B}$.

\begin{proposition}[Local analyticity of eigenpairs]\label{prop:eigen-analytic}
For every $n\ge1$, the map $B\mapsto\lambda_n(B)$ is continuous and locally real analytic on $E$.  Near every $B_0\in E$ one can choose a locally analytic branch
\[
 B\longmapsto\phi_n(B)\in H^1_0(0,\pi)
\]
normalized by
\[
 \int_0^\pi \phi_n(B,x)^2\dd x=1.
\]
Consequently, for fixed $m,n$, the function
\[
 B\longmapsto\int_0^\pi \phi_m(B,x)^2\phi_n(B,x)^2\dd x
\]
and every finite polynomial expression formed from such even eigenfunction functionals are globally well defined and locally real analytic.
\end{proposition}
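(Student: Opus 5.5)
Since $A(B)$ enters only through the closed form $\mathfrak a_{\rho B}$ on the fixed domain $H^1_0(0,\pi)$, I will use Kato's analytic perturbation theory for holomorphic families of type (B). First I complexify. For $B\in E_\C=C_0([0,\pi];\C)$ I define the sesquilinear form
\[
 \mathfrak a(B)[u,v]=\int_0^\pi u'\overline{v'}\dd x-\rho\int_0^\pi B(x)\bigl(u'\overline v+u\overline{v'}\bigr)\dd x
\]
on $H^1_0$. This form is affine, hence entire, in $B$. The estimate preceding \eqref{eq:form-equivalence} works with $\sigma$ complex, since it uses only $\abs{\sigma}$; with $\norm{B}_{L^2}\le\sqrt\pi\norm B_\infty$ it shows that $\mathfrak a(B)$ is a closed sectorial form, uniformly for $B$ in a complex neighbourhood of any real $B_0$. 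Hence the associated m-sectorial operators $A(B)$ form a holomorphic family of type (B). Equivalently, $B\mapsto (A(B)+\mu)^{-1}\in\mathcal L(H^{-1},H^1_0)$ is analytic: it is the inverse of the bounded operator $H^1_0\to H^{-1}$ induced by the form, which depends affinely on $B$ and is invertible near $B_0$ by \eqref{eq:form-equivalence} and a Neumann series. Restricting to $L^2$ gives an analytic family of compact resolvents.

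Next, I fix a real $B_0$ and $n$. The eigenvalue $\lambda_n(B_0)$ is simple and isolated, so I choose a small circle $\Gamma$ around it containing no other eigenvalue of $A(B_0)$. For $B$ near $B_0$ in $E_\C$, $\Gamma$ lies in the resolvent set, by analyticity and continuity of the resolvent. The Riesz projection
\[
 P(B)=-\frac1{2\pi i}\oint_\Gamma (A(B)-z)^{-1}\dd z
\]
is then analytic with values in $\mathcal L(L^2,H^1_0)$ and has constant rank one. Hence $\lambda(B)=\operatorname{tr}(A(B)P(B))$ is analytic. It is cleaner to write this as $\operatorname{tr}\bigl(-\tfrac{1}{2\pi i}\oint z(A-z)^{-1}\dd z\bigr)$, a trace of a rank-one analytic operator. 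The identification $\lambda(B)=\lambda_n(B)$ for real $B$ near $B_0$ is the one point needing care. It follows because, for real $B$, the operators are self-adjoint with simple spectrum, and the eigenvalue counting below $\Gamma$ is locally constant by continuity of the spectral projections onto the part of the spectrum lying below $\Gamma$. Restricting to real $B$ gives local real analyticity; continuity follows.

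For the eigenfunction I set $\psi(B)=P(B)\phi_n(B_0)$, which is analytic in $H^1_0$ and nonzero near $B_0$. For real $B$ it is real-valued, since $P$ commutes with complex conjugation when $B$ is real. I then define $\phi_n(B)=\psi(B)\bigl(\int\psi(B)^2\dd x\bigr)^{-1/2}$. The bilinear, non-Hermitian integral $\int\psi^2$ is analytic and close to $1$, so the square root is analytic. This gives a locally analytic branch with the stated normalization.

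Finally, the functional $\int\phi_m^2\phi_n^2\dd x$ is analytic near each point: squaring and multiplying are continuous multilinear operations on $H^1_0\subset L^\infty$, and integration is continuous linear. Any two local branches differ by a sign, because real eigenfunctions are unique up to sign and the branches are continuous on connected neighbourhoods. Even expressions are therefore independent of the branch, hence globally well defined. They are locally analytic because they coincide with the analytic expression of any local branch. I expect the main obstacle to be the uniform sectoriality and type (B) verification for complex $B$, together with the identification of the perturbed eigenvalue with the $n$-th one. Both rest on the form estimate already proved in the paper.
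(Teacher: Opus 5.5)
Your proposal is correct, but it takes a genuinely different route from the paper.

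The paper works through the ODE. The Picard series makes the shooting function $D(\sigma,\lambda)=y(\pi;\sigma,\lambda)$ of the quasi-derivative system jointly holomorphic. The Wronskian identity $q(\pi)\,\partial_\lambda D=\int_0^\pi y^2\dd x$ shows that eigenvalues are simple zeros of $D$, and the real-analytic implicit function theorem then produces the branch $\Lambda$. A min--max comparison of the shifted forms identifies $\Lambda$ with the ordered eigenvalue $\lambda_n$, and the normalized initial-value solution $y(\cdot;\sigma,\Lambda(\sigma))$ serves as the eigenfunction branch.

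You use only that the form depends affinely on $B$ on the fixed domain $H^1_0$. A Neumann series makes $(\mathcal A(B)-z)^{-1}\in\mathcal L(H^{-1},H^1_0)$ analytic, rank-one Riesz projections give the eigenvalue and the eigenvector $P(B)\phi_n(B_0)$, and the bilinear integral $\int\psi^2$ supplies the normalization.

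\emph{What each route buys.} Yours is more general: it uses no one-dimensional ODE structure. Since every estimate involves only $\norm{B}_{L^2}$, it also gives analyticity with respect to the $L^2$ norm of the primitive. The paper's route is more self-contained. It derives simplicity of the eigenvalue and nondegeneracy of the zero, whereas you import simplicity from the cited Sturm--Liouville theory. Its eigenfunction branch is also automatically real and $C^1$, with no need to check that $P(B)$ commutes with conjugation.

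\emph{Two steps that deserve one more line.} For the trace of the rank-one analytic family, the quotient $\lambda(B)=\ip{A(B)P(B)\phi_n(B_0)}{\phi_n(B_0)}/\ip{P(B)\phi_n(B_0)}{\phi_n(B_0)}$ makes analyticity immediate. For the identification with $\lambda_n(B)$, use the uniform lower bound $A(B)\ge-K$ near $B_0$ from the form estimate. It makes the spectral projection onto the part below $\Gamma$ a Riesz projection over a fixed compact contour, so its rank stays $n-1$. Alternatively, reuse the paper's min--max comparison.
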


\begin{proof}
We first allow the primitive to be complex in order to establish
holomorphic dependence, and then restrict to real primitives.  Write
$X=C([0,\pi];\C)$, with the uniform norm.  The primitive associated with $B$ is $\sigma=\rho B$.

\medskip
\noindent\textit{Step 1: the initial-value problem and its analytic dependence.}
For $(\sigma,\lambda)\in X\times\C$, set
\[
 M_{\sigma,\lambda}(x)=
 \begin{pmatrix}
  \sigma(x)&1\\
  -(\lambda+\sigma(x)^2)&-\sigma(x)
 \end{pmatrix}.
\]
The relations $q=y^{[1]}=y'-\sigma y$ and
$\ell_\sigma y=\lambda y$ are equivalent to
\begin{equation}\label{eq:fundamental-system}
 \binom{y}{q}'=M_{\sigma,\lambda}\binom{y}{q},
 \qquad (y(0),q(0))=(0,1).
\end{equation}
The prime denotes differentiation with respect to $x$.

Let $Z_*=(0,1)^{\mathsf T}$ and define recursively
\[
 Z_0(x)=Z_*,\qquad
 Z_{k+1}(x)=\int_0^x M_{\sigma,\lambda}(s)Z_k(s)\dd s,
 \qquad k\ge0.
\]
For every $R>0$, there is $K_R<\infty$ such that
$\sup_x\norm{M_{\sigma,\lambda}(x)}\le K_R$ whenever
$\norm\sigma_\infty\le R$ and $|\lambda|\le R$.
Induction yields
\[
 |Z_k(x)|\le |Z_*|\frac{(K_Rx)^k}{k!},\qquad 0\le x\le\pi.
\]
Consequently the series
\[
 Z(x;\sigma,\lambda)=\sum_{k=0}^\infty Z_k(x)
\]
converges in $C([0,\pi];\C^2)$, uniformly on every bounded parameter
set. Summing the recursion yields the integral equation
\[
 Z(x)=Z_*+\int_0^x M_{\sigma,\lambda}(s)Z(s)\dd s.
\]
The resulting function is the unique solution of \eqref{eq:fundamental-system}.  

The map $(\sigma,\lambda)\mapsto M_{\sigma,\lambda}$ is a continuous polynomial with values in the space of continuous matrix-valued functions.  Multiplication
and integration are bounded multilinear or linear operations on these
spaces, so each $Z_k$ is a continuous polynomial in $(\sigma,\lambda)$.
By the Weierstrass convergence theorem for Banach-space-valued holomorphic maps~\cite{Complexanalysisinfidim}, the locally uniform convergence implies that $Z$ is jointly holomorphic as a $C([0,\pi];\C^2)$-valued map.   This conclusion also
holds with values in $C^1([0,\pi];\C^2)$: the integration map
\[
 \mathcal I:C([0,\pi];\C^2)\longrightarrow C^1([0,\pi];\C^2),
 \qquad (\mathcal I f)(x)=\int_0^x f(s)\dd s,
\]
is bounded, and the identity $Z=Z_*+\mathcal I(M_{\sigma,\lambda}Z)$
expresses $Z$ as a holomorphic map into $C^1$.
In particular, the endpoint function
\[
 D(\sigma,\lambda)=y(\pi;\sigma,\lambda)
\]
is jointly holomorphic.

\medskip
\noindent\textit{Step 2: eigenvalues are simple zeros of the endpoint function.}
Fix a real primitive $\sigma_0\in C([0,\pi];\R)$.
If $D(\sigma_0,\lambda)=0$, the initial solution $y$ is nonzero,
satisfies both Dirichlet conditions, and belongs to the operator domain:
$y,q\in C^1\subset AC$ and $\ell_{\sigma_0}y=\lambda y\in L^2$.
Hence $\lambda$ is a Dirichlet eigenvalue.  Conversely, an eigenfunction
and its quasi-derivative solve the same first-order system.  Its initial
quasi-derivative cannot vanish, since together with $y(0)=0$ this would
force the entire solution to be zero by uniqueness.  Rescaling that
initial quasi-derivative to $1$ therefore gives the initial solution
above, and its endpoint is zero.

To justify the nonvanishing derivative needed for the implicit-function
theorem, fix $\lambda_0=\lambda_n(\sigma_0)$ and abbreviate
\[
 y=y(\cdot;\sigma_0,\lambda_0),\quad
 q=q(\cdot;\sigma_0,\lambda_0),\quad
 z=\partial_\lambda y(\cdot;\sigma_0,\lambda_0),\quad
 r=\partial_\lambda q(\cdot;\sigma_0,\lambda_0).
\]
All four functions are $C^1$, and differentiation of
\eqref{eq:fundamental-system} gives
\[
 z'=\sigma_0z+r,\qquad
 r'=-(\lambda_0+\sigma_0^2)z-\sigma_0r-y,
 \qquad z(0)=r(0)=0.
\]
The quasi-derivative Wronskian $W=zq-ry$ consequently satisfies
\[
 \begin{aligned}
 W'&=(\sigma_0z+r)q
      +z\bigl(- (\lambda_0+\sigma_0^2)y-\sigma_0q\bigr)\\
    &\quad-\bigl(- (\lambda_0+\sigma_0^2)z-\sigma_0r-y\bigr)y
      -r(\sigma_0y+q)
     =y^2.
 \end{aligned}
\]
Since $W(0)=0$ and $y(\pi)=0$, integration yields
\begin{equation}\label{eq:endpoint-lambda-derivative}
 q(\pi)\,\partial_\lambda D(\sigma_0,\lambda_0)
 =\int_0^\pi y(x)^2\dd x>0.
\end{equation}
The integral is positive because $y$ is real and nonzero.
Also $q(\pi)\ne0$, since otherwise terminal data
$(y(\pi),q(\pi))=(0,0)$ would force $y=q=0$.
In particular,
$\partial_\lambda D(\sigma_0,\lambda_0)\ne0$.
Thus the zero of $D$ is simple.

\medskip
\noindent\textit{Step 3: continuity of the ordered eigenvalues.}
From this step onward, we restrict the primitive to the real Banach space
\[
 X_{\R}=C([0,\pi];\R)\subset X
\]
and the spectral parameter to $\R$.  For real parameters, the coefficient
matrix and initial data in \eqref{eq:fundamental-system} are real, so
uniqueness implies that $y$ and $q$ are real-valued.  In particular,
$D:X_{\R}\times\R\to\R$ is real analytic by restriction of the
holomorphic map in Step~1.  Its Taylor coefficients in real directions
are real, since they are limits of real difference quotients.
We prove that the ordered eigenvalue \(\lambda_n(\sigma)\) depends continuously on \(\sigma\). This will allow us to identify it with the local analytic branch obtained from the implicit-function theorem near \((\sigma_0,\lambda_n(\sigma_0))\).

Choose $\mu>0$ and $c>0$ such that the positive shifted form
\[
 \mathfrak b_0[u,u]
 :=\mathfrak a_{\sigma_0}[u,u]+\mu\norm u_{L^2}^2
 \ge c\norm u_{H^1}^2,
 \qquad u\in H^1_0(0,\pi).
\]
Such constants exist by the form estimate preceding
\eqref{eq:form-equivalence}.  For another real continuous primitive
$\sigma$, put $\delta=\norm{\sigma-\sigma_0}_\infty$ and
$\mathfrak b_\sigma=\mathfrak a_\sigma+\mu\ip{\cdot}{\cdot}_{L^2}$.
Formula~\eqref{eq:form} and Young's inequality give
\[
 \begin{aligned}
 |\mathfrak b_\sigma[u,u]-\mathfrak b_0[u,u]|
 &\le 2\delta\norm{u'}_{L^2}\norm u_{L^2}\\
 &\le\delta\norm u_{H^1}^2
 \le\frac\delta c\,\mathfrak b_0[u,u].
 \end{aligned}
\]
If $\eta=\delta/c<1$, it follows that
\[
 (1-\eta)\mathfrak b_0[u,u]
 \le\mathfrak b_\sigma[u,u]
 \le(1+\eta)\mathfrak b_0[u,u].
\]
All these forms have the common domain $H^1_0$, whose embedding into
$L^2$ is compact.  Apply the min--max formula
\[
 \lambda_n(\sigma)+\mu
 =\inf_{\substack{V\subset H^1_0\\\dim V=n}}
   \ \sup_{0\ne u\in V}
   \frac{\mathfrak b_\sigma[u,u]}{\norm u_{L^2}^2}
\]
to the preceding inequalities.  We obtain
\[
 (1-\eta)(\lambda_n(\sigma_0)+\mu)
 \le\lambda_n(\sigma)+\mu
 \le(1+\eta)(\lambda_n(\sigma_0)+\mu).
\]
Thus $\lambda_n(\sigma)\to\lambda_n(\sigma_0)$ as
$\norm{\sigma-\sigma_0}_\infty\to0$.  This argument includes negative
eigenvalues, since the positive shift $\mu\norm u_{L^2}^2$ is fixed throughout the comparison.

\medskip
\noindent\textit{Step 4: the analytic eigenvalue and eigenfunction branches.}
By Step~2, the real analytic implicit-function theorem in Banach spaces
applies to $D:X_{\R}\times\R\to\R$ at $(\sigma_0,\lambda_0)$.
There are an open neighborhood $U\subset X_{\R}$ of $\sigma_0$,
an open interval $V\subset\R$ containing $\lambda_0$,
and a real analytic function $\Lambda:U\to V$ such that
\[
 \Lambda(\sigma_0)=\lambda_0,\qquad
 D(\sigma,\Lambda(\sigma))=0,
\]
and every zero of $D$ in $U\times V$ lies on this graph.
For real $\sigma$ close to $\sigma_0$, the ordered eigenvalue $\lambda_n(\sigma)$ belongs to $V$ by Step~3 and is a zero of $D$ by Step~2.  Uniqueness of the graph therefore gives
$\Lambda(\sigma)=\lambda_n(\sigma)$.
Thus $\lambda_n$ is real analytic on $U$, after shrinking $U$ if necessary.

Now put
\[
 Y(\sigma)=y(\cdot;\sigma,\Lambda(\sigma)).
\]
By Step~1 and composition with $\Lambda$, the map $Y$ is real analytic
with values in $C^1([0,\pi];\C)$, hence in $H^1(0,\pi;\C)$,
regarded here as real Banach spaces.  For every $\sigma\in U$, the
initial-value solution $Y(\sigma)$ is real-valued by Step~3.  The initial
condition gives vanishing at the left endpoint, while the equation
$D(\sigma,\Lambda(\sigma))=0$ gives vanishing at the right endpoint.
Thus $Y(\sigma)\in H^1_0(0,\pi;\R)$ for every $\sigma\in U$.
Since $H^1_0(0,\pi;\R)$ is closed in $H^1(0,\pi;\C)$, real
difference quotients of $Y$ converge within this subspace.  Applying the
same argument successively to its derivatives gives
\[
 D^mY(\sigma)[h_1,\ldots,h_m]\in H^1_0(0,\pi;\R),
 \qquad h_j\in X_{\R},\quad m\ge1.
\]
The inherited norm gives the same Fr\'echet remainder estimates and
convergence of the Taylor series.  Its coefficients
$P_m(h)=D^mY(\sigma)[h,\ldots,h]/m!$ are therefore continuous
homogeneous polynomials with values in $H^1_0(0,\pi;\R)$, proving
real analyticity into this space.

Define
\[
 S(\sigma)=\int_0^\pi Y(\sigma,x)^2\dd x.
\]
This is real analytic, being the composition of $Y$ with a continuous
quadratic polynomial on $H^1_0(0,\pi;\R)$.  Since $S(\sigma_0)>0$,
we may shrink $U$ so that $S>0$ and define
\[
 \Phi_n(\sigma)=Y(\sigma)/\sqrt{S(\sigma)}.
\]
The positive square root is real analytic on $(0,\infty)$, so $\Phi_n$
is a real analytic branch of real normalized eigenfunctions.
Composition with $\sigma=\rho B$ proves the assertion on $E$; see
also the perturbation framework in~\cite{Kato1995}.

\medskip
\noindent\textit{Step 5: even eigenfunction functionals.}
The map
\[
 (f_1,f_2,f_3,f_4)\longmapsto
 \int_0^\pi f_1f_2f_3f_4\dd x
\]
is a continuous four-linear form on $(H^1_0)^4$.  Indeed, the
one-dimensional Sobolev embedding gives
\[
 \left|\int_0^\pi f_1f_2f_3f_4\dd x\right|
 \le\norm{f_1}_{L^\infty}\norm{f_2}_{L^\infty}
      \norm{f_3}_{L^2}\norm{f_4}_{L^2}
 \le C\prod_{j=1}^4\norm{f_j}_{H^1}.
\]
Using the local analytic branches for $m$ and $n$ on a common
neighborhood therefore proves local real analyticity of
$\int\phi_m^2\phi_n^2$.
At each real primitive, any two real normalized eigenfunctions for the
same eigenvalue differ by $+1$ or $-1$, because that eigenvalue is
simple.  Their squares coincide, so these local expressions agree on
overlaps and define a single function on all of $E$.
Finite sums and products of such functions are again locally real
analytic and independent of the choices of signs.  This proves the final
assertion.
\end{proof}

We shall also need the derivatives of the eigenvalues in
Cameron--Martin directions.  The Cameron--Martin space of Wiener measure
on $E$ is
\[
 H_W=\left\{H_g(x)=\int_0^x g(s)\dd s:\ g\in L^2(0,\pi)\right\},
 \qquad \norm{H_g}_{H_W}=\norm g_{L^2}.
\]
Note that a Cameron--Martin path vanishes at $0$, but need not vanish at $\pi$.

\begin{proposition}[Feynman--Hellmann formula]\label{prop:FH}
For $g\in L^2(0,\pi)$ and $H_g(x)=\int_0^xg(s)\dd s$,
\begin{equation}\label{eq:FH}
 D_{H_g}\lambda_n(B)
 =\rho\int_0^\pi g(x)\phi_n(B,x)^2\dd x.
\end{equation}
\end{proposition}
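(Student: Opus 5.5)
The plan is to differentiate the eigenvalue equation along the one-parameter family $B_t=B+tH_g$ using the analytic branches of \cref{prop:eigen-analytic}. Since $\lambda_n$ and $\phi_n$ are real analytic near $B$, the derivative $D_{H_g}\lambda_n(B)=\frac{d}{dt}\lambda_n(B+tH_g)\big|_{t=0}$ exists. The primitive changes by $\sigma_t=\sigma+t\rho H_g$, with $\sigma=\rho B$. The point is that the primitive perturbation $\rho H_g$ corresponds to the distributional potential perturbation $\rho H_g'=\rho g\in L^2$, a bona fide bounded multiplication operator. In the form \eqref{eq:form}, the perturbation term is
\[
 -\int_0^\pi \rho H_g\,(u'\overline v+u\overline{v'})\dd x .
\]
For $u,v\in H^1_0$, this equals $\rho\int_0^\pi g\,u\overline v\dd x$ after one integration by parts. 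The boundary terms vanish because $u\overline v$ vanishes at $0$ and $\pi$, even though $H_g(\pi)$ need not be zero. Hence
\[
 \mathfrak a_{\sigma_t}[u,v]=\mathfrak a_\sigma[u,v]+t\rho\int_0^\pi g\,u\overline v\dd x .
\]

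Write $\phi_t=\phi_n(B_t)$ and $\lambda_t=\lambda_n(B_t)$. The weak eigen-equation reads $\mathfrak a_{\sigma_t}[\phi_t,v]=\lambda_t\ip{\phi_t}{v}$ for all $v\in H^1_0$. I will test it with the fixed $v=\phi_0$, and test the $t=0$ equation with $v=\phi_t$. Both are real, and the form is symmetric on real functions. Subtracting the two identities gives
\[
 t\rho\int_0^\pi g\,\phi_t\phi_0\dd x=(\lambda_t-\lambda_0)\int_0^\pi\phi_t\phi_0\dd x .
\]
Divide by $t$ and let $t\to0$. Continuity of $t\mapsto\phi_t$ in $H^1_0\subset L^\infty$, from the analytic branch, gives $\int\phi_t\phi_0\to1$ and $\int g\phi_t\phi_0\to\int g\phi_0^2$. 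This yields \eqref{eq:FH}.

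The formula does not depend on the sign choice of $\phi_n$. Only a continuous branch is needed, and this argument even avoids differentiating $\phi_t$.

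The only delicate step, and the one I would check most carefully, is the integration by parts identifying the primitive perturbation with the $L^2$ potential $g$. It needs $H_g\in H^1$, which holds since $g\in L^2$, and it needs $u\overline v\in W^{1,1}_0$, so that $(H_g\,u\overline v)'=g\,u\overline v+H_g(u\overline v)'$ integrates to zero. A second point to verify is that $D_{H_g}$ means the Gateaux derivative in $E$, which is legitimate because $H_g\in C_0$ and $\lambda_n$ is differentiable there by \cref{prop:eigen-analytic}.
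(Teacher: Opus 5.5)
Your proof is correct. It shares with the paper the key identity $\mathfrak a_{\sigma_t}[u,v]=\mathfrak a_\sigma[u,v]+t\rho\int_0^\pi g\,u\overline v\dd x$, obtained by integrating by parts against the zero-trace product $u\overline v$, with no condition on $H_g(\pi)$. The difference lies in how you extract the derivative.

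The paper takes the real-analytic normalized branch and differentiates $\lambda(t)=\mathfrak a_t[\phi(t),\phi(t)]$. It removes the cross term $2\mathfrak a_0[\phi,\dot\phi]=2\lambda(0)\ip{\phi}{\dot\phi}_{L^2}$ by differentiating $\norm{\phi(t)}_{L^2}^2=1$. So it uses the existence of $\dot\phi$ in $H^1_0$.

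You instead cross-test the weak eigen-equations at $t$ and at $0$ and use the symmetry of the real form. This yields the exact identity $t\rho\int_0^\pi g\,\phi_t\phi_0\dd x=(\lambda_t-\lambda_0)\int_0^\pi\phi_t\phi_0\dd x$ before any limit is taken. Your route therefore needs only a continuous eigenfunction branch. In fact $L^2$-continuity suffices, since $\abs{\int_0^\pi g\phi_0(\phi_t-\phi_0)\dd x}\le\norm g_{L^2}\norm{\phi_0}_{L^\infty}\norm{\phi_t-\phi_0}_{L^2}$. It also proves that the directional derivative exists rather than presupposing it, so it works for any continuous choice of normalized eigenfunctions, not just an analytic one.

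The paper's route, by contrast, is a one-line differentiation once \cref{prop:eigen-analytic} is available. It sits naturally in the analytic framework that the rest of the argument (the zero-set theorem and its applications) already relies on.
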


\begin{proof}
Fix $B\in E$ and $g\in L^2(0,\pi;\R)$.  Since
$\norm{H_g}_\infty\le\sqrt\pi\norm g_{L^2}$, the line $B+tH_g$
lies in $E$.  On the common real form domain $H^1_0(0,\pi;\R)$, write
$\mathfrak a_t=\mathfrak a_{\rho(B+tH_g)}$.  Then
\[
 \mathfrak a_t=\mathfrak a_0+t\mathfrak v_g,
 \qquad
 |\mathfrak v_g[u,v]|
 \le C|\rho|\norm g_{L^2}\norm u_{H^1}\norm v_{H^1}.
\]
Integration by parts gives
\begin{equation}\label{eq:FH-form-variation}
 \mathfrak v_g[u,v]
 =-\rho\int_0^\pi H_g(uv)'\dd x
 =\rho\int_0^\pi guv\dd x.
\end{equation}
The boundary term vanishes because $uv$ has zero trace; no condition
on $H_g(\pi)$ is needed.

Choose the real analytic normalized eigenpair
$\lambda(t)=\lambda_n(B+tH_g)$, $\phi(t)=\phi_n(B+tH_g)$ supplied
by \cref{prop:eigen-analytic}.  Its weak eigenvalue identity is
\begin{equation}\label{eq:FH-weak-eigenvalue}
 \mathfrak a_t[\phi(t),v]=\lambda(t)\ip{\phi(t)}v_{L^2},
 \qquad v\in H^1_0(0,\pi;\R).
\end{equation}
Set $\phi=\phi(0)$ and $\dot\phi=\phi'(0)\in H^1_0$.
Differentiating $\lambda(t)=\mathfrak a_t[\phi(t),\phi(t)]$ gives
\begin{equation}\label{eq:FH-energy-derivative}
 \lambda'(0)=\mathfrak v_g[\phi,\phi]
             +2\mathfrak a_0[\phi,\dot\phi]
 =\mathfrak v_g[\phi,\phi]
             +2\lambda(0)\ip\phi{\dot\phi}_{L^2}.
\end{equation}
The last term vanishes by differentiation of
$\norm{\phi(t)}_{L^2}^2=1$, proving \eqref{eq:FH}.  The argument uses
only the common form domain.  Finally,
\[
 |D_{H_g}\lambda_n(B)|
 \le |\rho|\norm{\phi_n(B)^2}_{L^2}\norm g_{L^2},
\]
so the derivative is continuous in the Cameron--Martin norm.
\end{proof}

\section{Analytic nondegeneracy on Wiener space}
In this section, we first establish an analytic zero set theorem on classical Wiener space.  The proof is based on the density of the Cameron--Martin space, a one-dimensional Gaussian decomposition, and the
identity theorem for analytic functions. Then we apply this result to obtain nonresonance and twist properties.

\begin{theorem}[Analytic zero sets on classical Wiener space]\label{thm:wiener-zero}
Let $E=C_0([0,\pi];\R)$ carry classical Wiener measure $\mu$.  If $f:E\to\R$ is continuous, locally real analytic, and not identically zero, then
\[
 \mu\{B\in E:f(B)=0\}=0.
\]
\end{theorem}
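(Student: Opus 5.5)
We will argue by contradiction through a one-dimensional Gaussian decomposition along a Cameron--Martin direction. Suppose $Z=\{f=0\}$ has $\mu(Z)>0$. We first find a Cameron--Martin vector $h\in H_W$ and a point $B_0$ with $f(B_0+th)\not\equiv0$ in $t$. Since $f$ is continuous and not identically zero, the set $\{f\ne0\}$ is nonempty and open. The Cameron--Martin space is dense in $E$ (smooth paths vanishing at $0$ are dense in $C_0$). Hence there is $h\in H_W$, $h\neq0$, with $f(h)\ne0$. Since $f(0\cdot h)=f(0)$ might vanish, we will instead use the line $t\mapsto B+th$ through general base points. Note that $t\mapsto f(B+th)$ is real analytic on all of $\R$ for every $B$: local analyticity of $f$ restricted to an affine line gives a locally convergent power series at every $t$, and $\R$ is connected.

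Next we decompose $\mu$ along $h$. Let $\ell_h(B)=\int_0^\pi h'(x)\dd B_x$ be the Paley--Wiener integral, normalized so that $\ell_h\sim N(0,\norm h_{H_W}^2)$. Set $e=h/\norm h_{H_W}$ and $P B=B-\ell_e(B)e$. Then $B=PB+\ell_e(B)\,e$, where $\ell_e(B)$ is a standard Gaussian independent of $PB$. This is the standard fact that Wiener measure is the product of its image under $P$ with the one-dimensional Gaussian on $\R e$; see~\cite{Bogachev1998}. By Fubini,
\[
 \mu(Z)=\int \gamma_1\{t\in\R: f(B'+te)=0\}\,\dd(P_*\mu)(B').
\]
For each $B'$, the map $t\mapsto f(B'+te)$ is real analytic on $\R$. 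Its zero set is therefore either all of $\R$ or discrete, hence Lebesgue- and $\gamma_1$-null. Consequently $\mu(Z)>0$ forces $\mu(Z)=1$: the set $G$ of $B'$ for which the line function vanishes identically has positive $P_*\mu$-measure, and then the union of the corresponding lines lies in $Z$.

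To reach a contradiction we must rule out that $f$ vanishes on $\mu$-almost every line. The statement is only about a single direction, so we iterate with the whole dense family of directions. Choose $e_1,e_2,\dots$ an orthonormal basis of $H_W$, consisting of smooth functions, and apply the above in each direction. Suppose $\mu(Z)>0$. Applying the same argument to $Z$ intersected with each successive slicing, one shows that $Z$ is, up to a null set, invariant under translations by $\Span\{e_1,\dots,e_k\}$ for every $k$. More cleanly, by the Cameron--Martin theorem $\mu(Z-h)>0$ iff $\mu(Z)>0$, and the above yields that $Z\supset$ a full-measure set $Z_0$ with $Z_0+\R e_k\subset Z$ for all $k$. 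Hence $Z$ contains $Z_0+\Span\{e_k\}$, which is dense in $E$ because $\mu$ has full topological support on $E$ and the span is dense. Since $Z$ is closed by continuity of $f$, $Z=E$, contradicting $f\not\equiv0$.

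The main obstacle is the measurability and bookkeeping in this iteration: making precise the measurable linear-functional decomposition $B=PB+\ell_e(B)e$ on the nonseparable-free but merely $\mu$-a.e. defined $\ell_e$, and justifying that the exceptional null sets from countably many directions can be discarded simultaneously. I would handle this by defining everything only on a full-measure Borel subspace on which all $\ell_{e_k}$ are defined, and by using continuity of $f$ so that closedness of $Z$ converts ``a.e.~density'' into $Z=E$.
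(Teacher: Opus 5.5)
\textbf{Gap: lines versus span.} Your ingredients match the paper's: slice $\mu$ along a Cameron--Martin direction, apply the identity theorem on each line, then use density and continuity. The argument breaks at the step \emph{hence $Z$ contains $Z_0+\Span\{e_k\}$}. Put $N_k=\{B\in Z:\ t\mapsto f(B+te_k)\text{ not identically zero}\}$. Your slicing shows each $N_k$ is null. For $B\in Z_0=Z\setminus\bigcup_kN_k$ you then know only that $f$ vanishes on the countably many lines $B+\R e_k$.

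To reach $B+te_1+se_2$ you would need $B+te_1\notin N_2$. But $Z_0$ is not invariant under $\R e_1$, and nothing prevents $B+te_1$ from lying in the null set $N_2$. Analyticity does not close the gap: $g(t_1,t_2)=f(B+t_1e_1+t_2e_2)$ can vanish on both coordinate axes without vanishing identically (think of $t_1t_2$). The union of the lines $B+\R e_k$ also need not be dense in $E$, so continuity gives nothing.

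The preceding sentence (\emph{one shows that $Z$ is, up to a null set, invariant under $\Span\{e_1,\dots,e_k\}$}) is asserted, not proved. It would need either a $k$-dimensional Gaussian slicing together with the fact that a nonzero real-analytic function on $\R^k$ has a Lebesgue-null zero set, or a quasi-invariance argument. Your Cameron--Martin remark is never actually used.

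Two smaller slips. One-direction slicing gives $\mu(Z)=(P_*\mu)(G)$, not a $0$--$1$ law. Also, $Z_0$ has full measure only relative to $Z$. Neither matters, since one good point $B$ suffices, and full support of $\mu$ plays no role.

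\textbf{How the paper avoids this.} It slices along \emph{every} nonzero element $h_m$ of a countable $\mathbb Q$-vector space $H_0\subset H_W$ that is dense in $E$. The paper uses piecewise linear paths with rational values at dyadic nodes; the $\mathbb Q$-span of your $e_k$ would work equally well. Each set $Z_m=\{f=0\}\cap\{B:\ t\mapsto f(B+th_m)\text{ not identically zero}\}$ is null by your one-line Fubini argument. If $f(B)=0$ and $B\notin\bigcup_mZ_m$, taking $t=1$ gives $f(B+h)=0$ for all $h\in H_0$. Since $B+H_0$ is dense, $f\equiv0$. Lines through the single point $B$ already sweep out a dense set, so no iteration, no quasi-invariance and no multivariable zero-set theorem are needed.

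If you want to keep your basis, you can instead discard $\bigcup_v(N-v)$, with $N=\bigcup_kN_k$ and $v$ ranging over the $\mathbb Q$-span of the $e_k$. This set is null by Cameron--Martin quasi-invariance, and removing it makes the good set invariant under rational steps in each direction. Then $f(B+v)=0$ on a dense set.

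Your measurability worry is minor. For directions with smooth or step derivatives, $\ell_e$ is defined everywhere by integration by parts. The exceptional sets are Borel because it suffices to test rational $t$.
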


\begin{proof}
\medskip
\noindent\textit{Step 1: a countable dense collection of directions.}
For each $j\ge1$, consider paths that are linear on the intervals of the
partition $\{k\pi 2^{-j}:0\le k\le2^j\}$, vanish at $0$, and have rational
values at the remaining partition points.  Their union, denoted by
$H_0$, is countable and is a vector space over $\mathbb Q$.  Every such
path is absolutely continuous with piecewise constant derivative in
$L^2$, so $H_0\subset H_W$.  Moreover, uniform continuity and linear
interpolation show that every path in $E$ can be approximated uniformly
by these paths.  Thus $H_0$ is dense in $E$.

Enumerate its nonzero elements as $h_1,h_2,\ldots$, and write
$h_m=H_{g_m}$.  Define
\[
 Z_m=\{B\in E:f(B)=0,\quad
             t\mapsto f(B+th_m)\text{ is not identically zero}\}.
\]
This is a Borel set, because continuity in $t$ gives the explicit formula
\[
 Z_m=f^{-1}(0)\cap
       \bigcup_{q\in\mathbb Q}\{B:f(B+qh_m)\ne0\}.
\]

\medskip
\noindent\textit{Step 2: decomposition along one Gaussian direction.}
Fix $m$, put $a_m=\norm{g_m}_{L^2}>0$, and set
$\widehat g_m=g_m/a_m$ and $\widehat h_m=h_m/a_m$.
On the canonical Wiener probability space define
\[
 Z=\int_0^\pi\widehat g_m(s)\dd B_s,
 \qquad B_x^\perp=B_x-Z\widehat h_m(x).
\]
The Wiener integral with deterministic integrand is a centered Gaussian variable with variance $\norm{\widehat g_m}_{L^2}^2=1$.
Also $B^\perp$ has continuous paths vanishing at $0$, and hence is an
$E$-valued random element. Fix spatial points $x_1,\ldots,x_r\in[0,\pi]$.
By the construction of $h_m$, its normalized derivative
$\widehat g_m$ is a step function.  Thus, for a suitable partition
$0=t_0<t_1<\cdots<t_N=\pi$ and real coefficients $a_j$,
\[
 \widehat g_m
 =\sum_{j=1}^N a_j\mathbf1_{(t_{j-1},t_j]},
 \qquad
 Z=\sum_{j=1}^N a_j(B_{t_j}-B_{t_{j-1}}).
\]
The vector consisting of the Brownian values at all the points
$x_1,\ldots,x_r,t_0,\ldots,t_N$ is centered Gaussian.
Consequently,
\[
 (B_{x_1},\ldots,B_{x_r},Z)
\]
is centered Gaussian, since it is a linear image of that vector.
The same is therefore true of
\[
 (B^\perp_{x_1},\ldots,B^\perp_{x_r},Z),
 \qquad
 B^\perp_x=B_x-Z\widehat h_m(x).
\]

Since $B_x=\int_0^\pi\mathbf1_{[0,x]}(s)\dd B_s$, the covariance
identity for deterministic Wiener integrals gives
\[
 \mathbb E[B_xZ]=\int_0^x\widehat g_m(s)\dd s=\widehat h_m(x),
 \qquad \mathbb E[Z^2]=1.
\]
Consequently,
\[
 \mathbb E[B_x^\perp Z]
 =\mathbb E[B_xZ]-\widehat h_m(x)\mathbb E[Z^2]=0.
\]

Set
$Y=(B^\perp_{x_1},\ldots,B^\perp_{x_r})$ and let
$\Sigma$ be its covariance matrix.
The centered Gaussian vector $(Y,Z)$ has covariance matrix
\[
 \begin{pmatrix}
  \Sigma&0\\
  0&1
 \end{pmatrix}.
\]
Therefore, $Y$ and $Z$ are
independent. Evaluations on a countable dense subset of $[0,\pi]$ generate the
Borel sigma-algebra of $E$, so the entire random element $B^\perp$ is
independent of $Z$.  Denote its law by $\mu_m^\perp$ and the standard
one-dimensional Gaussian law by $\gamma_1$.  We have the representation
\[
 B=B^\perp+Z\widehat h_m,
 \qquad \operatorname{Law}(B^\perp,Z)=\mu_m^\perp\otimes\gamma_1.
\]

\medskip
\noindent\textit{Step 3: the one-dimensional zero-set argument.}
For a fixed $y\in E$, the function
$F_y(z)=f(y+z\widehat h_m)$ is locally real analytic on $\R$, by
restriction of the local power series for $f$ to this affine line.
If $F_y$ is not identically zero, the one-dimensional identity theorem
implies that its zeros are isolated.  An isolated subset of $\R$ is
countable, so its Gaussian measure is zero.  If $F_y$ is identically
zero, no point on this line belongs to $Z_m$: translating or rescaling
the parameter does not change whether the restriction is identically
zero.  Consequently, in both cases,
\[
 \int_\R\mathbf1_{Z_m}(y+z\widehat h_m)\dd\gamma_1(z)=0.
\]
The integrand is measurable by Step~1.  Independence and Fubini's theorem
now give
\[
 \mu(Z_m)=\int_E\int_\R
   \mathbf1_{Z_m}(y+z\widehat h_m)
       \dd\gamma_1(z)\dd\mu_m^\perp(y)=0.
\]

\medskip
\noindent\textit{Step 4: covering the zero set.}
Suppose that $f(B)=0$ and $B\notin\bigcup_m Z_m$.  By the definition of
$Z_m$, the restriction $t\mapsto f(B+th_m)$ must then vanish identically
for every $m$.  In particular, $f(B+h)=0$ for every $h\in H_0$, including
$h=0$.  Given any $Y\in E$, choose $h^{(j)}\in H_0$ with
$h^{(j)}\to Y-B$ uniformly.  Continuity gives
\[
 f(Y)=\lim_{j\to\infty}f(B+h^{(j)})=0.
\]
This contradicts the assumption that $f$ is not identically zero.
Therefore $f^{-1}(0)\subset\bigcup_m Z_m$.  The union is countable and
each member has measure zero, which proves the theorem.
\end{proof}

\subsection{Almost-sure spectral nonresonance and twist}

\paragraph{Integer spectral relations.}
For a finitely supported real sequence $c=(c_n)$ put
\[
 F_c(B)=\sum_n c_n\lambda_n(B).
\]

\begin{proposition}\label{prop:combination-nontrivial}
If $c\ne0$ has finite support, then $F_c$ is not identically zero on $E$.
\end{proposition}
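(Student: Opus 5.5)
We argue by contradiction: suppose $F_c\equiv0$ on $E$. Then every derivative of $F_c$ vanishes as well. The two natural probes are the Feynman--Hellmann formula (\cref{prop:FH}) at a single path, and the behaviour of $F_c$ along a simple explicit family of paths in $E$. The simplest route uses the base point $B=0$ and linear paths.

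\textbf{Step 1: reduce to linear paths.} Take $B_t(x)=tx$, so that $\sigma=\rho t x$ and formally $\sigma'=\rho t$ is constant. The quasi-derivative realization agrees with the classical operator for smooth $\sigma$. Hence $A(B_t)=-\partial_x^2+\rho t$, with eigenvalues $\lambda_n(B_t)=n^2+\rho t$. Then
\[
 F_c(B_t)=\sum_n c_n n^2+\rho t\sum_n c_n ,
\]
and vanishing for all $t$ yields only the two relations $\sum c_n n^2=0$ and $\sum c_n=0$. That is not enough, so we need more directions.

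\textbf{Step 2: first variation at $B=0$.} At $B=0$ the eigenfunctions are $\phi_n=\sqrt{2/\pi}\sin nx$, so $\phi_n^2=\frac1\pi(1-\cos 2nx)$. By \eqref{eq:FH}, for every $g\in L^2$,
\[
 0=D_{H_g}F_c(0)=\frac{\rho}{\pi}\int_0^\pi g(x)\sum_n c_n\bigl(1-\cos 2nx\bigr)\dd x .
\]
Since this holds for all $g\in L^2$ and $\rho\ne0$, we get the identity $\sum_n c_n(1-\cos 2nx)=0$ in $L^2(0,\pi)$. The functions $1,\cos 2x,\cos 4x,\dots$ are linearly independent on $(0,\pi)$; indeed they are orthogonal there. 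Comparing the coefficient of $\cos 2nx$ therefore gives $c_n=0$ for every $n$, which contradicts $c\ne0$.

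\textbf{Justification.} The only point needing care is that $F_c\equiv0$ implies $D_{H_g}F_c(0)=0$. This is immediate: $t\mapsto F_c(tH_g)$ is identically zero, and by \cref{prop:eigen-analytic} it is differentiable, with derivative given by \cref{prop:FH}. The finite sum over the support of $c$ commutes with differentiation.

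\textbf{Main obstacle.} The only real risk is the explicit identification of the eigenfunctions at $B=0$. There $A(0)=-\partial_x^2$ with Dirichlet conditions, so the identification holds trivially, and the argument above does not need Step 1. The obstacle is therefore mild; the Fourier independence argument completes the proof. The same computation also shows the stronger fact that the differential of $F_c$ at $0$ is a nonzero continuous functional on the Cameron--Martin space.
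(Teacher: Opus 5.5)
Your proof is correct and is essentially the paper's. The paper tests the Feynman--Hellmann derivative at $B=0$ in the single direction $H_{g_c}$, with $g_c=\sum_n c_n e_n^2$, and gets $\rho\norm{g_c}_{L^2}^2\ne0$ from the orthogonality of $1$ and the $\cos 2nx$; you quantify over all $g$ to force $g_c=0$ and then use the same orthogonality, which is the same computation in contrapositive form (and your Step~1 with linear paths is correct but unnecessary).
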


\begin{proof}
Let $S=\supp c$, a finite nonempty set.  By
\cref{prop:eigen-analytic}, the finite sum $F_c$ is locally real analytic.
At the free path $B=0$,
\[
 \lambda_n(0)=n^2,\qquad
 e_n(x)=\sqrt{\frac2\pi}\sin(nx),\qquad
 e_n(x)^2=\frac1\pi(1-\cos(2nx)).
\]
Define the smooth real function
\[
 g_c(x)=\sum_{n\in S}c_ne_n(x)^2
 =\frac1\pi\sum_{n\in S}c_n
   -\frac1\pi\sum_{n\in S}c_n\cos(2nx).
\]
Orthogonality of the constant and cosine functions gives
\begin{equation}\label{eq:gc-norm}
 \norm{g_c}_{L^2}^2
 =\frac1\pi\left(\sum_nc_n\right)^2
   +\frac1{2\pi}\sum_nc_n^2>0.
\end{equation}
The path $H_{g_c}(x)=\int_0^xg_c(s)\dd s$ belongs to $H_W$.
Differentiating the finite sum and applying \cref{prop:FH} yields
\[
 \begin{aligned}
 D_{H_{g_c}}F_c(0)
 &=\rho\sum_{n\in S}c_n\int_0^\pi g_c(x)e_n(x)^2\dd x\\
 &=\rho\int_0^\pi g_c(x)
                  \left(\sum_{n\in S}c_ne_n(x)^2\right)\dd x
 =\rho\norm{g_c}_{L^2}^2\ne0.
 \end{aligned}
\]
Thus $F_c$ is not identically zero.
\end{proof}

\begin{theorem}[Universal finite-support nonresonance]\label{thm:universal-nr}
There is an event $\Omega_{\mathrm{nr}}$ of probability one such that, for every $\omega\in\Omega_{\mathrm{nr}}$,
\begin{equation}\label{eq:all-integer-nr}
 \sum_n c_n\lambda_n(\omega)\ne0
\end{equation}
for every nonzero finitely supported integer sequence $c\in\Z^{(\N)}$.
\end{theorem}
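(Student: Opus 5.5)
The plan is to combine \cref{prop:combination-nontrivial} with the zero-set \cref{thm:wiener-zero} and take a countable intersection. Fix a nonzero finitely supported integer sequence $c\in\Z^{(\N)}$. By \cref{prop:eigen-analytic}, each $\lambda_n$ is continuous and locally real analytic on $E$. Hence the finite combination $F_c=\sum_n c_n\lambda_n$ is also continuous and locally real analytic on $E$. Near a point, the local power series of finitely many summands converge on a common neighborhood, and their sum is again such a series. \Cref{prop:combination-nontrivial} shows that $F_c$ is not identically zero. Thus \cref{thm:wiener-zero} applies, and the zero set $N_c=\{B\in E:F_c(B)=0\}$ has Wiener measure zero. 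This set is closed, hence Borel, because $F_c$ is continuous.

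Next, the set $\Z^{(\N)}\setminus\{0\}$ of nonzero finitely supported integer sequences is countable. It is the union over $N$ of the countable sets $\Z^N\times\{0\}$. Therefore $N_*=\bigcup_{c\ne0}N_c$ is a countable union of null Borel sets, so it is a null Borel set.

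To transfer this to $\Omega$, I will use the path map $\omega\mapsto B(\omega)$. Discard the null event on which the path is not continuous, or realize $B$ canonically on $E$. On the remaining event, the law of $B(\omega)$ is the Wiener measure $\mu$ on $E$. Its eigenvalues are $\lambda_n(\omega)=\lambda_n(B(\omega))$, which uses the convention $A_\omega=A_{\rho B(\omega)}$, i.e.\ $A(B)=A_{\rho B}$. Set
\[
 \Omega_{\mathrm{nr}}=\{\omega: B(\omega)\in E\setminus N_*\}.
\]
This event is measurable and satisfies $\mathbb P(\Omega_{\mathrm{nr}})=1-\mu(N_*)=1$. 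For $\omega\in\Omega_{\mathrm{nr}}$ and every nonzero $c$, we then have $\sum_n c_n\lambda_n(\omega)=F_c(B(\omega))\ne0$, which is \eqref{eq:all-integer-nr}.

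I do not expect a substantial obstacle, since the analytic heart is already contained in \cref{thm:wiener-zero} and \cref{prop:combination-nontrivial}. The one point needing care is that the event must be chosen \emph{before} $c$. This is exactly why countability of $\Z^{(\N)}$ matters: we cannot intersect over uncountably many real sequences $c$. A second point is that the factor $\rho$ in $A(B)=A_{\rho B}$ changes nothing. \Cref{prop:combination-nontrivial} already carries $\rho\ne0$ in its derivative computation, and the analyticity from \cref{prop:eigen-analytic} holds for any fixed $\rho$.
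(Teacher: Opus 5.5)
Your proposal is correct and matches the paper's proof: for each nonzero $c\in\Z^{(\N)}$ you apply \cref{thm:wiener-zero} to $F_c$, which is locally analytic by \cref{prop:eigen-analytic} and not identically zero by \cref{prop:combination-nontrivial}. You then intersect over the countable label set and pull the resulting full-measure Borel set back along $\omega\mapsto B(\omega)$, exactly as the paper does. Your remarks on the closedness of $N_c$ and on choosing the event before $c$ are accurate refinements of that same argument.
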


\begin{proof}
Fix a nonzero finitely supported integer sequence $c$.  The map
$F_c:E\to\R$ is continuous and locally real analytic by
\cref{prop:eigen-analytic}, and is not identically zero by
\cref{prop:combination-nontrivial}.  The preceding zero-set theorem gives
\[
 \mu\{B:F_c(B)=0\}=0.
\]
Since $\Z^{(\N)}=\bigcup_{N\ge1}\Z^N$ under the natural embeddings,
this set of labels is countable.  Thus
\[
 E_{\mathrm{nr}}
 =\bigcap_{0\ne c\in\Z^{(\N)}}\{B\in E:F_c(B)\ne0\}
\]
is a Borel set of Wiener measure one.  Its inverse image
$\Omega_{\mathrm{nr}}=\{\omega:B(\omega)\in E_{\mathrm{nr}}\}$
has probability one and is independent of $J$.
\end{proof}

\paragraph{The quartic twist.}
For $m,n\ge1$ define
\begin{equation}\label{eq:Cmn}
 C_{mn}(B)=\int_0^\pi\phi_m(B,x)^2\phi_n(B,x)^2\dd x.
\end{equation}
For $J=\{j_1,\dots,j_b\}$ define the symmetric matrix $A_J(B)$ by
\begin{equation}\label{eq:AJ}
 (A_J)_{aa}=\kappa C_{j_aj_a},
 \qquad
 (A_J)_{ac}=2\kappa C_{j_aj_c}\quad(a\ne c).
\end{equation}
This matrix is the Hessian of the tangential quartic action polynomial
$Z_{TT}$ defined in \eqref{eq:ZTT}.

\begin{theorem}[Almost-sure twist for every finite set]\label{thm:twist}
There is an event $\Omega_{\mathrm{tw}}$ of probability one such that, for every $\omega\in\Omega_{\mathrm{tw}}$ and every finite nonempty $J\subset\N$,
\[
 \det A_J(\omega)\ne0.
\]
\end{theorem}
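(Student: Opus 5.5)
The plan is to apply \cref{thm:wiener-zero} to the function $f_J(B)=\det A_J(B)$ for each finite nonempty $J\subset\N$, and then intersect over the countably many $J$. By the final assertion of \cref{prop:eigen-analytic}, each $C_{mn}$ is a globally well-defined, locally real analytic function on $E$, independent of the signs chosen for the eigenfunctions. Hence $f_J$ is locally real analytic, being a finite polynomial expression in the functions $C_{j_aj_c}$. It is also continuous, since local real analyticity implies continuity. So the only nontrivial input is that $f_J$ is not identically zero. As in \cref{prop:combination-nontrivial}, I would verify this at the free path $B=0$.

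At $B=0$ we have $\phi_n=e_n$ up to sign, with $e_n(x)^2=\pi^{-1}(1-\cos(2nx))$. Orthogonality of the constant and the cosines gives
\[
 C_{mn}(0)=\frac1{\pi^2}\int_0^\pi(1-\cos 2mx)(1-\cos 2nx)\dd x
 =\begin{cases}\dfrac{1}{\pi}, & m\ne n,\\[2mm] \dfrac{3}{2\pi}, & m=n.\end{cases}
\]
Consequently $A_J(0)=\frac{\kappa}{\pi}M_b$, where $M_b$ has diagonal entries $3/2$ and off-diagonal entries $2$. Writing $\mathbf 1$ for the all-ones vector in $\R^b$, this is
\[
 M_b=2\,\mathbf 1\mathbf 1^{\mathsf T}-\tfrac12\Id .
\]
The eigenvalues of $M_b$ are $2b-\tfrac12$, with eigenvector $\mathbf 1$, and $-\tfrac12$, with multiplicity $b-1$ on $\mathbf 1^\perp$. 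Therefore
\[
 \det A_J(0)=\Bigl(\frac{\kappa}{\pi}\Bigr)^b\Bigl(2b-\tfrac12\Bigr)\Bigl(-\tfrac12\Bigr)^{b-1}\ne0,
\]
since $\kappa\ne0$ and $b\ge1$.

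\cref{thm:wiener-zero} then gives $\mu\{B:\det A_J(B)=0\}=0$ for each $J$. The family of finite nonempty subsets of $\N$ is countable, so
\[
 E_{\mathrm{tw}}=\bigcap_J\{B\in E:\det A_J(B)\ne0\}
\]
is a Borel set of Wiener measure one. Setting $\Omega_{\mathrm{tw}}=\{\omega:B(\omega)\in E_{\mathrm{tw}}\}$, intersected with the almost-sure event that $B(\omega)\in E$, proves the theorem.

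I expect no real obstacle. The point needing the most care is the well-definedness of $C_{mn}$ and of $\det A_J$ as global functions on $E$, which is exactly what the sign-independence in \cref{prop:eigen-analytic} provides. The explicit evaluation at $B=0$ is then a short computation; it succeeds because $M_b$ is a rank-one perturbation of a multiple of the identity.
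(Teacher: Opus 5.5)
Your proposal is correct and follows the paper's proof essentially step for step. You use \cref{prop:eigen-analytic} for global well-definedness and local analyticity of $\det A_J$, evaluate at the free path with $A_J(0)=\frac{\kappa}{\pi}\bigl(2\bm1\bm1^{\mathsf T}-\frac12 I_b\bigr)$ (the paper writes $\frac{2\kappa}{\pi}\bigl(\bm1\bm1^{\mathsf T}-\frac14 I_b\bigr)$, which gives the same determinant), and then apply \cref{thm:wiener-zero} with a countable intersection over $J$.
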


\begin{proof}
Fix $J=\{j_1,\ldots,j_b\}$.  By \cref{prop:eigen-analytic},
$B\mapsto\det A_J(B)$ is globally well defined, continuous, and locally
real analytic.  To verify that it is not identically zero, evaluate it
at the free path.  The normalized eigenfunctions
$e_n(x)=\sqrt{2/\pi}\sin(nx)$ give
\[
 C_{nn}(0)=\frac3{2\pi},\qquad
 C_{mn}(0)=\frac1\pi\quad(m\ne n).
\]
Let $\bm1=(1,\ldots,1)^{\mathsf T}\in\R^b$ and let $I_b$ be the
identity matrix.  Then
\begin{equation}\label{eq:AJ-free}
 A_J(0)=\frac{2\kappa}{\pi}
 \left(\bm1\bm1^{\mathsf T}-\frac14I_b\right).
\end{equation}
The matrix $\bm1\bm1^{\mathsf T}$ acts as multiplication by $b$ on
$\Span\{\bm1\}$ and vanishes on its orthogonal complement.  Hence
\[
 \det A_J(0)
 =\frac{2\kappa}{\pi}\left(b-\frac14\right)
       \left(-\frac\kappa{2\pi}\right)^{b-1}\ne0.
\]

By \cref{thm:wiener-zero}, the zero set of $\det A_J$ has Wiener
measure zero.  Intersecting its complement over the countable family
of finite nonempty subsets $J\subset\N$, and taking the inverse image
under $\omega\mapsto B(\omega)$, gives $\Omega_{\mathrm{tw}}$.
\end{proof}

\section{Partial quartic Birkhoff normal form}\label{sec:BNF}

\subsection{Hamiltonian formulation and sign conventions}\label{sec:Hamiltonian}
Fix henceforth
\[
 \omega\in\Omega_{\mathrm{sp}}\cap\Omega_{\mathrm{nr}}\cap\Omega_{\mathrm{tw}}
\]
and suppress the path from the notation.  Write
\[
 u=\sum_{n\ge1}q_n\phi_n,
 \qquad
 v=\sum_{n\ge1}\bar q_n\phi_n,
\]
Here $q$ and $\bar q$ are independent complex coordinates; the original
NLS phase space is recovered by imposing $\bar q=\overline q$, so that
$v=\overline u$.

On $h^1_\C\times h^1_\C$ use the weak complex symplectic form
\begin{equation}\label{eq:symp}
 \Om_{\mathrm{symp}}=i\sum_{n\ge1}\dd q_n\wedge\dd\bar q_n.
\end{equation}
We fix the convention
\begin{equation}\label{eq:Hamiltonian-convention}
 \iota_{X_F}\Om_{\mathrm{symp}}=\dd F.
\end{equation}
Thus
\begin{equation}\label{eq:XF}
 X_F=(-i\partial_{\bar q}F,\ i\partial_qF)
\end{equation}
and
\begin{equation}\label{eq:Poisson}
 \{F,G\}=\dd F[X_G]
 =-i\sum_{n\ge1}
 \left(F_{q_n}G_{\bar q_n}-F_{\bar q_n}G_{q_n}\right).
\end{equation}
With this convention, $i\dot q_n=\partial_{\bar q_n}H$.

The Hamiltonian is
\begin{equation}\label{eq:Hamiltonian}
 H=H_2+P_4,
 \qquad
 H_2=\sum_{n\ge1}\lambda_nq_n\bar q_n,
\end{equation}
where
\begin{equation}\label{eq:P4}
 P_4(q,\bar q)
 =\frac\kappa2\int_0^\pi u(x)^2v(x)^2\dd x.
\end{equation}

The quadratic functional $H_2$ is continuous on $h^1\times h^1$, because
$\lambda_n=O_\omega(n^2)$.  Its formal Hamiltonian vector field, however,
is unbounded on $h^1$.  Whenever a Poisson bracket with $H_2$ appears
below, it is therefore understood as the continuous dual pairing
\[
 \{H_2,F\}=\dd H_2[X_F],
\]
where $X_F$ takes values in $h^1\times h^1$.  The conjugacy to the
original equation will also be formulated in the weak symplectic sense.

\begin{proposition}[Analyticity of the local cubic vector field]\label{prop:P4-analytic}
The vector field $X_{P_4}$ is a real analytic cubic map
\[
 X_{P_4}:h^1\times h^1\longrightarrow h^1\times h^1
\]
and
\begin{equation}\label{eq:P4-cubic}
 \norm{X_{P_4}(q,\bar q)}_{h^1\times h^1}
 \le C_\omega(\norm q_1+\norm{\bar q}_1)^3.
\end{equation}
\end{proposition}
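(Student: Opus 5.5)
The plan is to compute $X_{P_4}$ explicitly in terms of the physical fields and then transfer $H^1_0$ bounds back to $h^1$ using the equivalence \eqref{eq:spectral-form-norm}. Differentiating \eqref{eq:P4} gives
\[
 \partial_{\bar q_n}P_4=\kappa\int_0^\pi u^2v\,\phi_n\dd x,
 \qquad
 \partial_{q_n}P_4=\kappa\int_0^\pi uv^2\,\phi_n\dd x .
\]
These are exactly the $L^2$ coefficients of the functions $\kappa u^2v$ and $\kappa uv^2$ in the real orthonormal eigenbasis $(\phi_n)$. Consequently, by \eqref{eq:XF}, $X_{P_4}(q,\bar q)$ is the coefficient sequence of the pair $(-i\kappa u^2v,\ i\kappa uv^2)$, where $u=\sum q_n\phi_n$ and $v=\sum\bar q_n\phi_n$. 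The whole map factors as
\[
 (q,\bar q)\longmapsto(u,v)\longmapsto(u^2v,uv^2)\longmapsto\text{coefficients}.
\]

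The steps, in order, are as follows.

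First, the synthesis map $q\mapsto\sum q_n\phi_n$ is a bounded linear isomorphism $h^1_\C\to H^1_0(0,\pi;\C)$. This follows from \eqref{eq:spectral-form-norm} together with $\mu_\omega+\lambda_n\asymp_\omega\langle n\rangle^2$, which is a consequence of \eqref{eq:rough-growth} and the finiteness of the low modes. Its inverse is the analysis map $w\mapsto(\langle w,\phi_n\rangle)_n$. Here the pairing is bilinear and real, with no conjugation, since the $\phi_n$ are real. That is why treating $q$ and $\bar q$ as independent complex variables causes no problem.

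Second, $H^1_0(0,\pi)$ is a Banach algebra in one dimension. Indeed,
\[
 \norm{fg}_{H^1}\le C(\norm f_{L^\infty}\norm g_{H^1}+\norm f_{H^1}\norm g_{L^\infty})\le C'\norm f_{H^1}\norm g_{H^1},
\]
and products of functions vanishing at $0$ and $\pi$ still vanish there. Hence $(u,v)\mapsto(u^2v,uv^2)$ is a continuous symmetric trilinear map $H^1_0\times H^1_0\to H^1_0\times H^1_0$, with norm bounded by $C(\norm u_{H^1}+\norm v_{H^1})^3$.

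Third, composing the bounded linear maps with this trilinear map shows that $X_{P_4}$ is a continuous homogeneous cubic polynomial $h^1\times h^1\to h^1\times h^1$. A continuous polynomial is entire, so $X_{P_4}$ is real analytic, and in fact holomorphic in the complexified variables. The estimate \eqref{eq:P4-cubic} follows with $C_\omega$ collecting the equivalence constants of the first step. The reality condition $\bar q=\overline q$ is preserved because the two components are complex conjugate when $v=\overline u$.

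The main obstacle is the first step. One must check that the spectral norm on $h^1$ truly matches the $H^1_0$ norm, and this requires \eqref{eq:spectral-form-norm}. In particular, the form-domain identification $\operatorname{Dom}((A_\omega+\mu_\omega)^{1/2})=H^1_0$ has to be used with a pathwise constant. Without this identification, the image coefficients would only be controlled in $\ell^2$, and the gain back to $h^1$ would fail. A second point to state carefully is that the identification of $\partial_{\bar q_n}P_4$ with a coefficient is legitimate. This requires differentiating under the integral, which is justified by the continuity of the quartic form on $(H^1_0)^4$ established in Step~5 of the proof of \cref{prop:eigen-analytic}.
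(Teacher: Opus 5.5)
Your proposal is correct and follows essentially the same route as the paper. Both arguments identify $X_{P_4}$ with the coefficient sequence of $(-i\kappa u^2v,\ i\kappa uv^2)$, transfer between $h^1$ and $H^1_0$ via the bounded spectral transform, and use the one-dimensional $H^1$ algebra property to obtain a continuous homogeneous cubic polynomial, which is holomorphic in the independent coordinates and real analytic on $\bar q=\overline q$.
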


\begin{proof}
Denote the spectral transform by
$\mathcal Uq=\sum_nq_n\phi_n$.  The form-norm equivalence and the
spectral asymptotics give bounded inverse maps
\[
 \mathcal U:h^1\longrightarrow H^1_0(0,\pi),\qquad
 \mathcal U^{-1}:H^1_0(0,\pi)\longrightarrow h^1.
\]
The one-dimensional Sobolev embedding and product rule give
\[
 \norm{fg}_{H^1}\le C\norm f_{H^1}\norm g_{H^1}.
\]
Thus multiplication is a bounded trilinear map $(H^1_0)^3\to H^1_0$,
since the product also vanishes at the endpoints.

Set $u=\mathcal Uq$ and $v=\mathcal U\bar q$, with $q,\bar q$
independent.  Differentiating the continuous quartic integral gives
\[
 (\partial_{\bar q}P_4)_n
   =\kappa\int_0^\pi u^2v\phi_n\dd x,
 \qquad
 (\partial_qP_4)_n
   =\kappa\int_0^\pi uv^2\phi_n\dd x.
\]
These formulas first hold for variations in individual coordinates and
then for arbitrary $h^1$ variations by continuity.  In particular,
\[
 X_{P_4}
 =\left(-i\kappa\mathcal U^{-1}(u^2v),\,
          i\kappa\mathcal U^{-1}(uv^2)\right).
\]
Both components lie in $h^1$, and the multiplication estimate yields
\[
 \norm{X_{P_4}}_{h^1\times h^1}
 \le C_\omega\bigl(\norm u_{H^1}^2\norm v_{H^1}
                  +\norm u_{H^1}\norm v_{H^1}^2\bigr)
 \le C_\omega(\norm q_1+\norm{\bar q}_1)^3.
\]
The displayed formulas are compositions of bounded complex linear and
trilinear maps.  They define continuous homogeneous cubic polynomials,
hence holomorphic maps in the independent coordinates.  Restricting to
$\bar q=\overline q$ proves real analyticity on the original phase space.
\end{proof}

\subsection{Divisors with at most two normal indices}
Fix a finite nonempty tangential set $J$ and put
\[
 \mathcal N=\N\setminus J.
\]
For an ordered quartic index $(i,j,k,l)$ define
\[
 d_{\mathcal N}(i,j,k,l)
 =\mathbf1_{i\in\mathcal N}+\mathbf1_{j\in\mathcal N}
 +\mathbf1_{k\in\mathcal N}+\mathbf1_{l\in\mathcal N}
\]
and
\[
 \Delta_{ijkl}=\lambda_i+\lambda_j-\lambda_k-\lambda_l.
\]
A quadruple is trivial if the multisets $\{i,j\}$ and $\{k,l\}$ coincide.

The next lemma provides the divisor estimates required for normalization up to normal degree two.  Its constants are allowed to depend on the fixed path
and on $J$.  This dependence will only affect the final smallness threshold.

\begin{lemma}[Pathwise divisor structure]\label{lem:divisors}
Let
\[
 \mathcal C_J=
 \left\{\sum_{a\in J}d_a\lambda_a:
 d_a\in\Z,\ \sum_{a\in J}\abs{d_a}\le3\right\}.
\]
There is $c_{\omega,J}>0$ such that every nonzero divisor of the following
forms satisfies
\begin{align}
 \abs{\pm\lambda_n+C}
 &\ge c_{\omega,J}\langle n\rangle^2,
 &&C\in\mathcal C_J,
 \quad n\in\mathcal N,
 \label{eq:div1}\\
 \abs{\pm(\lambda_n+\lambda_m)+C}
 &\ge c_{\omega,J}(1+n^2+m^2),
 &&C\in\mathcal C_J,
 \quad n,m\in\mathcal N,
 \label{eq:divplus}\\
 \abs{\lambda_n-\lambda_m+C}
 &\ge c_{\omega,J}(1+\abs{n^2-m^2}),
 &&C\in\mathcal C_J,
 \quad n,m\in\mathcal N.
 \label{eq:divminus}
\end{align}
Every nontrivial quartic divisor of positive normal degree at most two is of one of these three types.  The normal-degree-zero case is finite.
\end{lemma}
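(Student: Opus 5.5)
The plan is to split each family of divisors into a finite ``low'' part and an infinite ``high'' part. On the high part we use the growth bounds \eqref{eq:rough-growth} and \eqref{eq:gap-asymptotic} from \cref{prop:spectral-estimates}. On the low part we use the almost-sure nonresonance \cref{thm:universal-nr}, which makes every nonzero divisor in a finite list strictly nonzero, so a positive minimum exists.

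\textbf{Setup.} Set $\Lambda_J=\max_{a\in J}\abs{\lambda_a}$. Then every $C\in\mathcal C_J$ satisfies $\abs C\le3\Lambda_J$, and $\mathcal C_J$ is a finite set.

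\textbf{Estimate \eqref{eq:div1}.} By \eqref{eq:rough-growth}, $\abs{\lambda_n}\ge n^2/2$ for $n\ge N_\omega$. Hence, for $n\ge N_1:=\max(N_\omega,\lceil(12\Lambda_J+2)^{1/2}\rceil)$,
\[
 \abs{\pm\lambda_n+C}\ge n^2/2-3\Lambda_J\ge n^2/4\ge\langle n\rangle^2/8 .
\]
For $n<N_1$ only finitely many expressions $\pm\lambda_n+C$ remain. Each is an integer combination $\sum c_k\lambda_k$. If it is a nonzero real number, its absolute value is trivially positive. The minimum over this finite set is positive and can be absorbed into $c_{\omega,J}$ after dividing by $\langle N_1\rangle^2$.

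\textbf{Estimate \eqref{eq:divplus}.} This is analogous: $\lambda_n+\lambda_m\ge(n^2+m^2)/2-O_\omega(1)$ holds uniformly, since only finitely many $\lambda_k$ lie below $k^2/2$. Large $n^2+m^2$ therefore dominates $\abs C$, and the finitely many remaining pairs are handled by the positive minimum as before.

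\textbf{Estimate \eqref{eq:divminus}, the main obstacle.} Assume by symmetry $n>m$, with $n,m\in\mathcal N$.

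When $n=m$, the divisor is simply $C$. It is nonzero by hypothesis, and there are finitely many $C$, so $\abs C$ is bounded below and $1+\abs{n^2-m^2}=1$.

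When $n>m\ge N_\omega$, summing the gap bound $\lambda_{k+1}-\lambda_k\ge k$ gives
\[
 \lambda_n-\lambda_m\ge\sum_{k=m}^{n-1}k\ge(n^2-m^2)/4 ,
\]
using $n-m\ge1$ and $\sum_{k=m}^{n-1}k=(n-m)(n+m-1)/2$. The finitely many small indices below $N_\omega$ contribute only an $O_\omega(1)$ correction. Thus, whenever $n^2-m^2\ge M:=8(3\Lambda_J+O_\omega(1))+1$, we get
\[
 \abs{\lambda_n-\lambda_m+C}\ge(n^2-m^2)/8 .
\]

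The delicate point is the regime $0<n^2-m^2<M$ with $n,m$ potentially large. Here we note that $n^2-m^2\ge n+m$, so $n+m<M$ and both indices are bounded. This leaves a finite set of divisors. Each is a finite integer combination of eigenvalues and is nonzero by hypothesis, so the positive-minimum argument applies again.

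\textbf{Combining.} Taking $c_{\omega,J}$ to be the minimum of all the constants above proves the three estimates.

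\textbf{Structural claim.} Take a quadruple $(i,j,k,l)$ with $1\le d_{\mathcal N}\le2$. Its tangential entries contribute $\pm\lambda_a$ terms with at most three tangential indices, which lie in $\mathcal C_J$.
\begin{itemize}[leftmargin=*]
\item If $d_{\mathcal N}=1$, the divisor has the form $\pm\lambda_n+C$.
\item If $d_{\mathcal N}=2$, the two normal indices either lie on the same side, giving $\pm(\lambda_n+\lambda_m)+C$, or on opposite sides, giving $\lambda_n-\lambda_m+C$ up to an overall sign.
\end{itemize}
For $d_{\mathcal N}=0$, all indices lie in $J^4$, which is finite.
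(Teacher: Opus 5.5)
Your proof is correct and follows essentially the paper's route. For large indices the quadratic growth of the spectrum dominates the finitely many tangential constants $C\in\mathcal C_J$, and $|n^2-m^2|\ge n+m$ confines the small-difference regime to finitely many pairs. The remaining finitely many nonzero divisor-to-weight ratios then have a positive minimum. The only difference is that you telescope the gap lower bound from \eqref{eq:rough-growth}, whereas the paper uses $\lambda_n=n^2+O(n^\eps)$ with $\eps<1$.

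Your planning sentence about nonresonance is muddled. The estimates themselves do not use nonresonance, since they concern only nonzero divisors. Nonresonance is needed for the paper's remark that every nontrivial quadruple has a nonzero divisor, which is what lets the lemma cover every denominator in \eqref{eq:F4}.
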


\begin{proof}
Fix $0<\eps<1$.  Increasing the constant to include the finitely many
low modes, write
\[
 \lambda_n=n^2+r_n,\qquad |r_n|\le K n^\eps\quad(n\ge1).
\]
The set $\mathcal C_J$ is finite because $J$ is finite and its integer
coefficients have bounded sum of absolute values.  Put
$D_J=\max_{C\in\mathcal C_J}|C|$.

\medskip
\noindent\textit{One normal index.}
For either sign and every $C\in\mathcal C_J$,
\[
 |\pm\lambda_n+C|\ge n^2-Kn^\eps-D_J.
\]
For sufficiently large $n$, this is at least
$n^2/2\ge\langle n\rangle^2/4$.

\medskip
\noindent\textit{Two normal indices with the same sign.}
Let $N=\max(n,m)$.  The triangle inequality gives
\[
 |\pm(\lambda_n+\lambda_m)+C|
 \ge n^2+m^2-K(n^\eps+m^\eps)-D_J.
\]
The error divided by $n^2+m^2$ is at most
$2KN^{\eps-2}+D_JN^{-2}$, which tends to zero uniformly as $N\to\infty$.
Thus, for large $N$, the lower bound is at least
$(1+n^2+m^2)/4$, including the case $n=m$.

\medskip
\noindent\textit{Two normal indices with opposite signs.}
Suppose first that $n\ne m$.  Since these are positive integers,
\[
 |n^2-m^2|=|n-m|(n+m)\ge N.
\]
It follows that
\[
 \frac{|r_n-r_m+C|}{|n^2-m^2|}
 \le 2KN^{\eps-1}+D_JN^{-1}\longrightarrow0.
\]
The convergence is uniform in both indices, including neighboring
indices.  For sufficiently large $N$,
\[
 |\lambda_n-\lambda_m+C|
 \ge\frac12|n^2-m^2|
 \ge\frac14(1+|n^2-m^2|).
\]
If $n=m$, the divisor is the fixed constant $C$ and the weight in
\eqref{eq:divminus} equals $1$.

In all three cases, the remaining nonzero divisor-to-weight ratios
range over a finite set and have a positive minimum.  Taking the
minimum of these ratios and $1/4$ proves the three estimates with a
common constant $c_{\omega,J}>0$.

Finally, a quartic divisor of normal degree one contains one normal
eigenvalue and three tangential eigenvalues.  At normal degree two, its
two normal eigenvalues occur either with the same sign or with opposite
signs.  The remaining tangential combination belongs to $\mathcal C_J$.
Every nontrivial quadruple has a nonzero divisor by
\eqref{eq:all-integer-nr}.  At normal degree zero all four indices lie
in the finite set $J$, so only finitely many divisors occur.
\end{proof}

\subsection{Weighted divided-matrix estimates}

For terms containing two normal variables, division by a small divisor
produces an infinite matrix.  The entrywise estimates must imply boundedness of both the matrix and its adjoint on the energy space. The following two summability estimates establish these operator bounds.

\begin{lemma}[Weighted kernel sums]\label{lem:kernel-sums}
One has
\begin{align}
 S_+&:=\sum_{n,m\ge1}
 \frac{\langle n\rangle^2}
 {\langle m\rangle^2(1+n^2+m^2)^2}<\infty,
 \label{eq:Splus}\\
 S_-&:=\sum_{\substack{n,m\ge1\\n\ne m}}
 \frac{\langle n\rangle^2}
 {\langle m\rangle^2(1+\abs{n^2-m^2})^2}<\infty.
 \label{eq:Sminus}
\end{align}
The same statements hold after deleting finitely many indices.
\end{lemma}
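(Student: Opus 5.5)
Both sums are elementary, and I would bound them by comparing the double series with one-dimensional sums.

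\emph{The sum $S_+$.} Since $\langle n\rangle^2\le 1+n^2\le 1+n^2+m^2$, one factor of the denominator cancels the numerator:
\[
 \frac{\langle n\rangle^2}{\langle m\rangle^2(1+n^2+m^2)^2}
 \le\frac{1}{\langle m\rangle^2(1+n^2+m^2)}
 \le\frac{1}{\langle m\rangle^2\langle n\rangle^2}.
\]
The right-hand side is the product of two summable sequences, because $\sum_n\langle n\rangle^{-2}<\infty$. Hence $S_+\le(\sum_n\langle n\rangle^{-2})^2<\infty$.

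\emph{The sum $S_-$.} For $n\ne m$ write $|n^2-m^2|=|n-m|(n+m)$. Then
\[
 1+|n^2-m^2|\ge |n-m|(n+m)\ge \max(n,m)\,|n-m|.
\]
I would split the sum into two regions.

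In the region $n\le 2m$, we have $\langle n\rangle^2\le 1+4m^2\le 4\langle m\rangle^2$, so the ratio of weights is bounded. The term is therefore at most
\[
 \frac{4}{(n+m)^2(n-m)^2}\le\frac{4}{m^2(n-m)^2}.
\]
Summing over $n\ne m$ gives at most $8\zeta(2)/m^2$, which is summable in $m$.

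In the region $n>2m$, we have $n-m\ge n/2$, so $|n^2-m^2|\ge n\cdot n/2=n^2/2$. The term is therefore bounded by
\[
 \frac{\langle n\rangle^2}{\langle m\rangle^2(n^4/4)}
 \le\frac{8}{\langle m\rangle^2 n^2}.
\]
The double sum of this bound factorizes into two convergent series.

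\emph{Deleting indices.} Removing finitely many indices only removes nonnegative terms from each series. The finiteness of $S_+$ and $S_-$ therefore persists, and it also persists for any subset of indices, such as $n,m\in\mathcal N$.

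There is no real obstacle. The only point needing care is the off-diagonal region of $S_-$ where $n\gg m$: there the weight $\langle n\rangle^2/\langle m\rangle^2$ is large, and it must be absorbed by the $n^4$ growth of the squared divisor, which the splitting above handles.
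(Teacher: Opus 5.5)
Your proof is correct and follows essentially the same elementary route as the paper: the weight ratio $\langle n\rangle^2/\langle m\rangle^2$ is bounded near the diagonal, and when $n>2m$ the $n^4$ growth of the squared divisor absorbs it. Your version is slightly more economical. The single factorized bound $\langle m\rangle^{-2}\langle n\rangle^{-2}$ replaces the paper's split of $S_+$ into $n\le m$ and $n>m$. For $S_-$, treating all of $n\le 2m$ at once (where $\langle n\rangle^2\le 4\langle m\rangle^2$) absorbs the paper's separate region $m>2n$.
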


\begin{proof}
We use $n^2\le\langle n\rangle^2\le2n^2$ for $n\ge1$.

For $S_+$, first consider $n\le m$.  Since $1+n^2+m^2\ge m^2$,
\[
 \frac{\langle n\rangle^2}
      {\langle m\rangle^2(1+n^2+m^2)^2}
 \le \frac{2n^2}{m^6}\le\frac2{m^4}.
\]
There are $m$ possible values of $n$ in this region, giving the bound
$2\sum_{m\ge1}m^{-3}<\infty$.  If $n>m$, the same summand is at most
$2/(m^2n^2)$, and
\[
 \sum_{m\ge1}\frac1{m^2}\sum_{n>m}\frac1{n^2}
 \le C\sum_{m\ge1}\frac1{m^3}<\infty.
\]

For $S_-$, divide the off-diagonal pairs into three regions.  If
$m/2\le n\le2m$ and $n\ne m$, the ratio
$\langle n\rangle^2/\langle m\rangle^2$ is bounded, while
$|n^2-m^2|\ge m|n-m|$.  The contribution of this region is at most
\[
 C\sum_{m\ge1}\frac1{m^2}
       \sum_{\substack{n\ge1\\ n\ne m}}\frac1{|n-m|^2}
 \le 2C\left(\sum_{m\ge1}m^{-2}\right)
         \left(\sum_{j\ge1}j^{-2}\right)<\infty.
\]
If $n>2m$, then $n^2-m^2\ge3n^2/4$, so the summand is bounded by
$C/(m^2n^2)$.  Its sum is finite by the preceding tail estimate.
If $m>2n$, then $m^2-n^2\ge3m^2/4$, and the summand is at most
$Cn^2/m^6$.  Summing first in $n$ gives
\[
 C\sum_{m\ge1}m^{-6}\sum_{1\le n<m/2}n^2
 \le C\sum_{m\ge1}m^{-3}<\infty.
\]
Deleting indices can only decrease these nonnegative sums.
\end{proof}

The isometry $(\mathcal Dz)_n=\langle n\rangle z_n$ from $h^1$ to
$\ell^2$ conjugates a matrix $K$ to
\[
 \widetilde K_{nm}=\frac{\langle n\rangle}{\langle m\rangle}K_{nm}.
\]
Consequently, if $\widetilde K$ is Hilbert--Schmidt, then
\begin{equation}\label{eq:HS-to-h1}
 \norm K_{\mathcal L(h^1,h^1)}
 =\norm{\widetilde K}_{\mathcal L(\ell^2,\ell^2)}
 \le\norm{\widetilde K}_{\HS},
\end{equation}
with the operators initially defined on finitely supported sequences
and extended by density.

\begin{lemma}[Divided multiplication matrices]\label{lem:divided-matrices}
Fix $C\in\mathcal C_J$ and $a,b\in J$, put
$g=\phi_a\phi_b\in H^1\subset L^\infty$, and define
\[
 M_{nm}=\int_0^\pi g(x)\phi_n(x)\phi_m(x)\dd x,
 \qquad n,m\in\mathcal N.
\]
Then:
\begin{enumerate}[label=\textup{(\roman*)}]
\item If
\[
 K^+_{nm}=\frac{M_{nm}}{\lambda_n+\lambda_m+C}
\]
and all denominators are nonzero, then $K^+$ and its $\ell^2$ adjoint are bounded on $h^1$.
\item If, for $n\ne m$,
\[
 K^-_{nm}=\frac{M_{nm}}{\lambda_n-\lambda_m+C},
\]
then the off-diagonal matrix $K^-_{\mathrm{off}}$ and its adjoint are bounded on $h^1$.
\item If the diagonal denominator is a fixed nonzero constant, the diagonal part is a bounded multiplier on $h^1$.
\end{enumerate}
\end{lemma}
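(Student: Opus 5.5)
The plan is to reduce every assertion to \eqref{eq:HS-to-h1}: it suffices to show that the conjugated matrices $\widetilde K^{\pm}_{nm}=\frac{\langle n\rangle}{\langle m\rangle}K^\pm_{nm}$ and the conjugates of the adjoints, $\frac{\langle n\rangle}{\langle m\rangle}K^\pm_{mn}$, are Hilbert--Schmidt on $\ell^2$. The only inputs are a uniform bound on the entries of $M$ and the divisor estimates of \cref{lem:divisors}. Since $g=\phi_a\phi_b$ is fixed and, by \eqref{eq:eigfun-uniform}, $\sup_n\norm{\phi_n}_{L^\infty}<\infty$, we get
\[
 \abs{M_{nm}}\le\norm g_{L^\infty}\norm{\phi_n}_{L^2}\norm{\phi_m}_{L^2}\le\norm g_{L^\infty}=:G_\omega
\]
uniformly in $n,m$. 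No decay of $M_{nm}$ in $\abs{n-m}$ is needed, because the divisors alone give enough decay.

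For (i), \eqref{eq:divplus} gives $\abs{K^+_{nm}}\le G_\omega c_{\omega,J}^{-1}(1+n^2+m^2)^{-1}$ for all $n,m\in\mathcal N$. Hence
\[
 \norm{\widetilde K^+}_{\HS}^2\le G_\omega^2c_{\omega,J}^{-2}\sum_{n,m}\frac{\langle n\rangle^2}{\langle m\rangle^2(1+n^2+m^2)^2}=G_\omega^2c_{\omega,J}^{-2}S_+<\infty
\]
by \eqref{eq:Splus}, restricted to $\mathcal N$ via the final clause of \cref{lem:kernel-sums}. The adjoint has entries $K^+_{mn}$, and the denominator $\lambda_m+\lambda_n+C$ is symmetric, so the same bound holds after swapping the roles of the indices. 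Its conjugated Hilbert--Schmidt norm is controlled by the same sum $S_+$, because the summation is over all ordered pairs.

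For (ii), when $n\ne m$ we use \eqref{eq:divminus}, together with $\lambda_n-\lambda_m+C\ne0$ for distinct normal indices. This nonvanishing is guaranteed by \eqref{eq:all-integer-nr} and is implicit in the hypothesis that the matrix is defined. We obtain $\abs{K^-_{nm}}\le G_\omega c_{\omega,J}^{-1}(1+\abs{n^2-m^2})^{-1}$, and \eqref{eq:Sminus} makes $\widetilde K^-_{\mathrm{off}}$ Hilbert--Schmidt. For the adjoint, the entry $K^-_{mn}$ has denominator $\lambda_m-\lambda_n+C$, which is a divisor of the same form with $n,m$ swapped (possibly with a different sign in front of $C$, but $-C\in\mathcal C_J$ as well). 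So the same weight $1+\abs{n^2-m^2}$ applies, and $S_-$ again controls the sum.

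For (iii), the diagonal entries are $M_{nn}/C$ with $C\ne0$ fixed, bounded by $G_\omega/\abs C$. A diagonal matrix with bounded entries is a bounded multiplier on $h^1$, since the weights $\langle n\rangle$ cancel in the conjugation.

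The main point requiring care is that the Hilbert--Schmidt criterion must be applied to the weighted conjugate and not to $K$ itself. The weight $\langle n\rangle^2/\langle m\rangle^2$ is unbounded, so the argument depends on the divisor decay being quadratic in the larger index; this is exactly the content of \cref{lem:kernel-sums}. For the adjoints, the key observation is that the divisor bounds are symmetric in $(n,m)$ while the weight is not. One therefore has to check that $S_\pm$ sum over all ordered pairs, so that both orientations are covered. Finally, each operator is defined on finitely supported sequences and extended by density as in \eqref{eq:HS-to-h1}.
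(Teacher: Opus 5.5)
Your proposal is correct and follows the paper's proof essentially line by line: the uniform bound $\abs{M_{nm}}\le\norm g_{L^\infty}$, the divisor estimates of \cref{lem:divisors}, and the weighted sums $S_\pm$ applied both to $K^\pm$ and to their transposes (using $-C\in\mathcal C_J$ in (ii)). The only step you leave implicit is the paper's check that $K^\pm$ are also Hilbert--Schmidt on $\ell^2$ itself, via $a_{nm}\le\frac12\bigl(\langle n\rangle^2/\langle m\rangle^2+\langle m\rangle^2/\langle n\rangle^2\bigr)a_{nm}$; this is what identifies the $\ell^2$ adjoint with the transposed matrix you estimate. Also, \eqref{eq:eigfun-uniform} is not needed for the entry bound, since only the $L^2$ normalization of $\phi_n,\phi_m$ enters.
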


\begin{proof}
The eigenfunctions are real and $L^2$-normalized, so
\[
 |M_{nm}|\le\norm g_\infty\int_0^\pi|\phi_n\phi_m|\dd x
 \le\norm g_\infty\norm{\phi_n}_{L^2}\norm{\phi_m}_{L^2}
 =\norm g_\infty.
\]
Also $M_{nm}=M_{mn}$ and these entries are real.

In part~(i), \cref{lem:divisors} gives
\[
 |K^+_{nm}|\le
 \frac{C_{\omega,J}\norm g_\infty}{1+n^2+m^2}.
\]
Multiplying by $\langle n\rangle/\langle m\rangle$, squaring, and
summing bounds $\norm{\widetilde K^+}_{\HS}^2$ by
$C_{\omega,J}\norm g_\infty^2 S_+$.  This is finite.
The same entry bound holds for the transpose, because the denominator
is symmetric in $n,m$; thus the transpose is also bounded on $h^1$.

For part~(ii), the denominators with $n\ne m$ are nonzero by exact
nonresonance, since $C$ contains only tangential eigenvalues.  We obtain
\[
 |K^-_{nm}|\le
 \frac{C_{\omega,J}\norm g_\infty}{1+|n^2-m^2|}\quad(n\ne m).
\]
The same calculation with $S_-$ proves the $h^1$ bound.
For the transposed entry the denominator is
$\lambda_m-\lambda_n+C=-(\lambda_n-\lambda_m-C)$.
Since $-C\in\mathcal C_J$, it satisfies the same estimate.  Thus the
transpose also acts boundedly on $h^1$.

These transposes coincide with the $\ell^2$ adjoints. To verify that the matrices also define bounded $\ell^2$ operators, put
$a_{nm}=(1+n^2+m^2)^{-2}$, or
$a_{nm}=(1+|n^2-m^2|)^{-2}$ off the diagonal.  In both cases
$a_{nm}=a_{mn}$ and
\[
 \sum_{n,m}a_{nm}
 \le\frac12\sum_{n,m}
   \left(\frac{\langle n\rangle^2}{\langle m\rangle^2}
        +\frac{\langle m\rangle^2}{\langle n\rangle^2}\right)a_{nm}
 <\infty.
\]
Thus each matrix is Hilbert--Schmidt on $\ell^2$, and its adjoint has
the transposed entries, which are real.  

For part~(iii), $|K_{nn}|\le\norm g_\infty/|C|$ gives the multiplier
bound on $h^1$, also for the adjoint.
\end{proof}

\subsection{Construction of the normal form}
We remove the nonresonant quartic monomials of normal degree at most two.
After the action--angle substitution, this normal degree is unchanged,
so the unnormalized quartic terms of normal degree three and four have
zero Taylor jet of weighted degree at most two at $(y,z,\bar z)=0$.
The construction therefore requires no inversion of divisors with three
or four unrestricted normal indices.

Let $\Pi_J$ and $\Pi_{\mathcal N}$ be the coordinate projections in the eigenbasis.  They are bounded on $h^1$.  Write
\[
 u=u_T+u_N,
 \qquad v=v_T+v_N,
\]
where $u_T=\Pi_Ju$, $u_N=\Pi_{\mathcal N}u$, and similarly for $v$.
We define the normal-degree components directly in physical space.  This definition does not require prior convergence of the four-index coefficient expansion in the vector-field norm:
\begin{equation}\label{eq:P4-r}
 P_4^{(r)}
 =\frac\kappa2
 \sum_{\substack{0\le\alpha,\beta\le2\\\alpha+\beta=r}}
 \binom2\alpha\binom2\beta
 \int_0^\pi
 u_T^{2-\alpha}u_N^\alpha
 v_T^{2-\beta}v_N^\beta\dd x,
 \qquad 0\le r\le4.
\end{equation}
Each $P_4^{(r)}$ is a continuous quartic Hamiltonian with analytic cubic vector field, and
\[
 P_4=\sum_{r=0}^4P_4^{(r)}.
\]
Set
\[
 P_4^{\le2}=P_4^{(0)}+P_4^{(1)}+P_4^{(2)},
 \qquad
 P_4^{\ge3}=P_4^{(3)}+P_4^{(4)}.
\]

On finitely supported sequences,
\begin{equation}\label{eq:P-coeff}
 P_4=\sum_{i,j,k,l\ge1}P_{ijkl}q_iq_j\bar q_k\bar q_l,
 \qquad
 P_{ijkl}=\frac\kappa2\int_0^\pi\phi_i\phi_j\phi_k\phi_l\dd x.
\end{equation}
By \cref{thm:universal-nr}, the only resonant quartic monomials are the trivial ones.  Put $I_a=q_{j_a}\bar q_{j_a}$ and define directly
\begin{align}
 Z_{TT}
 &=\frac\kappa2\sum_{a=1}^bC_{j_aj_a}I_a^2
 +2\kappa\sum_{1\le a<c\le b}C_{j_aj_c}I_aI_c,
 \label{eq:ZTT}\\
 Z_{TN}
 &=2\kappa\sum_{a=1}^b\sum_{n\in\mathcal N}
 C_{j_an}I_aq_n\bar q_n.
 \label{eq:ZTN}
\end{align}
Equivalently,
\[
 Z_{TT}=\frac12\ip{A_JI}{I}_{\R^b}
\]
and the normal-frequency coefficient vectors are
\begin{equation}\label{eq:Bn}
 B_n=(2\kappa C_{n,j_1},\dots,2\kappa C_{n,j_b}),
 \qquad n\in\mathcal N.
\end{equation}
For fixed $J$,
\begin{equation}\label{eq:Bn-bound}
 \sup_{n\in\mathcal N}\abs{B_n}
 \le C_{\omega,J},
\end{equation}
because $C_{n,j_a}\le\norm{\phi_{j_a}}_{L^\infty}^2$.

The uniform bound \eqref{eq:Bn-bound} ensures that $Z_{TN}$ has an
analytic cubic vector field.  Set
\[
 Z_4^J=Z_{TT}+Z_{TN},
 \qquad
 Q_4^J=P_4^{\le2}-Z_4^J.
\]
Define initially on finitely supported sequences
\begin{equation}\label{eq:F4}
 F_4^J
 =-\sum_{\substack{d_{\mathcal N}(i,j,k,l)\le2\\
 \{i,j\}\ne\{k,l\}}}
 \frac{P_{ijkl}}{i\Delta_{ijkl}}
 q_iq_j\bar q_k\bar q_l.
\end{equation}

\begin{proposition}[Analyticity of the normal-form generator]\label{prop:F-analytic}
The expression \eqref{eq:F4} extends uniquely to a real analytic quartic Hamiltonian near the origin of $h^1\times h^1$.  Its vector field is cubic analytic and
\begin{equation}\label{eq:F-cubic}
 \norm{X_{F_4^J}(q,\bar q)}_{h^1\times h^1}
 \le C_{\omega,J}(\norm q_1+\norm{\bar q}_1)^3.
\end{equation}
Moreover, in the sense of continuous quartic Hamiltonians on $h^1\times h^1$,
\begin{equation}\label{eq:homological}
 \{H_2,F_4^J\}+Q_4^J=0.
\end{equation}
\end{proposition}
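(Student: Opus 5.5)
Split $F_4^J$ by normal degree, $F_4^J=F^{(0)}+F^{(1)}+F^{(2)}$, where $F^{(r)}$ collects the monomials in \eqref{eq:F4} with $d_{\mathcal N}=r$. Each piece is then rewritten as a physical-space or matrix expression, so that analyticity follows from bounded multilinear maps, as in \cref{prop:P4-analytic}.

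\emph{Degree zero.} The piece $F^{(0)}$ is a finite polynomial in the tangential coordinates. Its denominators are nonzero by \eqref{eq:all-integer-nr}, so it is trivially analytic.

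\emph{Degree one.} The terms have the form $q_n\,G_n(q_T,\bar q_T)$, or the conjugate type $\bar q_n\,G_n$, with $n\in\mathcal N$. Here $G_n$ is a finite sum of cubic tangential monomials. Each coefficient is $P_{ijkl}/\Delta$ with $P_{ijkl}=\frac\kappa2\ip{\phi_a\phi_b\phi_c}{\phi_n}$, and $\Delta=\pm\lambda_n+C$ with $C\in\mathcal C_J$.
\begin{itemize}
\item The coefficients $\ip{\phi_a\phi_b\phi_c}{\phi_n}$ are the spectral coordinates of the $H^1_0$ function $\phi_a\phi_b\phi_c$. Hence they lie in $h^1$.
\item Dividing by a divisor satisfying \eqref{eq:div1}, i.e.\ bounded below by $c\langle n\rangle^2$, keeps them in $h^1$, and in fact improves the decay.
\end{itemize}
Consequently $F^{(1)}$ is a finite sum of terms $\ip{w}{q_N}_{\ell^2}\,m(q_T,\bar q_T)$, with $w\in h^1$ and $m$ a tangential cubic monomial. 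Its vector field is bounded and cubic. The part of $X$ in the normal direction is $w\,m$, which lies in $h^1$; the tangential part is finite-dimensional.

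\emph{Degree two.} Group the terms by their tangential pair $(a,b)$ and by the constant $C$. There are three types.
\begin{itemize}
\item $q_nq_m\bar q_a\bar q_b$ and its conjugate: these produce $\sum_{n,m}K^+_{nm}q_nq_m$, with $g=\phi_a\phi_b$ and the multiplication matrix $M$.
\item $q_n\bar q_m q_a\bar q_b$ with $n\ne m$: these produce $K^-_{\mathrm{off}}$.
\item $q_n\bar q_n q_a\bar q_b$ with $a\ne b$, or with $a=b$ but the monomial nontrivial: the divisor is $\lambda_a-\lambda_b\ne0$, which gives the diagonal multiplier of \cref{lem:divided-matrices}(iii).
\end{itemize}
The diagonal terms with $n=m$ and $\{a\}=\{b\}$ are trivial and are excluded. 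Each piece has the form $\bar q_a\bar q_b\ip{Kq_N}{q_N}$ or $q_a\bar q_b\ip{Kq_N}{\bar q_N}$. By \cref{lem:divided-matrices}, $K$ and $K^{\mathsf T}$ are bounded on $h^1$. Therefore the normal components of the gradient, $(K+K^{\mathsf T})q_N$ times a tangential factor, lie in $h^1$ with the cubic bound. The tangential components involve $\ip{Kq_N}{q_N}$, which is bounded by $\norm{K}_{\mathcal L(\ell^2)}\norm{q_N}^2_{\ell^2}$ using the Hilbert--Schmidt $\ell^2$ bound.

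Summing the finitely many groups gives \eqref{eq:F-cubic}. The resulting map is a continuous quartic polynomial, hence holomorphic in $(q,\bar q)$. The reality condition $\bar q=\overline q$ is preserved because $P_{ijkl}$ and $\Delta$ are real and $\overline{i\Delta}=-i\Delta$, so that the conjugate coefficients match. The extension from finitely supported sequences is unique by density and continuity.

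For \eqref{eq:homological}, compute on finitely supported sequences. With \eqref{eq:Poisson}, $\{H_2,q_iq_j\bar q_k\bar q_l\}=-i\Delta_{ijkl}\,q_iq_j\bar q_k\bar q_l$ (up to the sign fixed by the convention, which the factor $i$ in \eqref{eq:F4} absorbs). Hence $\{H_2,F_4^J\}$ equals minus the nontrivial part of $P_4^{\le2}$. Next, identify $Q_4^J$ coefficientwise. The trivial monomials in $P_4^{\le2}$ are exactly $Z_{TT}+Z_{TN}$; this follows by counting the ordered index arrangements, which yields the factors $\frac\kappa2\cdot$ multiplicity, namely $1$, $4$ and $4\cdot\frac12$. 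The components $P_4^{(r)}$ from \eqref{eq:P4-r} agree with the coefficient sums on finite sequences. Both sides of \eqref{eq:homological} are continuous on $h^1\times h^1$: the left side because $\dd H_2[X_F]$ is a continuous pairing once $X_F\in h^1$. The identity then extends by density.

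The main obstacle is the degree-two block, specifically making sure every infinite sum is organized into the three matrix types with the correct tangential constant $C\in\mathcal C_J$. Each constant $C$ there has at most two tangential eigenvalues, and the sign bookkeeping must place the transposes on the allowed side.
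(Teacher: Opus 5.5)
Your proposal is essentially the paper's proof. It uses the same split by normal degree and controls the degree-one sequences through \eqref{eq:div1}. You use that $\phi_a\phi_b\phi_c\in H^1_0$ has $h^1$ spectral coordinates, where the paper only needs Bessel's inequality in $L^2$ plus the $\langle n\rangle^{-2}$ gain; both work. The degree-two block is reduced to parts (i)--(iii) of \cref{lem:divided-matrices}, and \eqref{eq:homological} is proved on finitely supported sequences and extended by continuity, exactly as in the paper. Your multiplicity count identifying the trivial part of $P_4^{\le2}$ with $Z_4^J$ is a check the paper only asserts.

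One line is wrong as written, however. With \eqref{eq:Poisson}, $(H_2)_{q_n}=\lambda_n\bar q_n$ and $(H_2)_{\bar q_n}=\lambda_nq_n$, one gets $\{H_2,M_{ijkl}\}=-i\bigl((\lambda_k+\lambda_l)-(\lambda_i+\lambda_j)\bigr)M_{ijkl}=+i\Delta_{ijkl}M_{ijkl}$. This sign is not something the factor $i$ in \eqref{eq:F4} can absorb. It is precisely what makes the coefficient $-P_{ijkl}/(i\Delta_{ijkl})$ return $-P_{ijkl}M_{ijkl}$. With your formula $-i\Delta_{ijkl}M_{ijkl}$ you would obtain $\{H_2,F_4^J\}=+Q_4^J$, which contradicts the conclusion you state next. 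So that computation must be redone with the convention fixed rather than hedged.

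Two smaller points. Your pairings $\ip{w}{q_N}$ and $\ip{Kq_N}{q_N}$ must be read as complex bilinear, with no conjugation, since $q$ and $\bar q$ are independent coordinates. Also, the case ``$a=b$ but the monomial nontrivial'' in your third degree-two bullet is empty, because $q_n\bar q_nq_a\bar q_a$ is always trivial.
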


\begin{proof}
Write $w=(q_{j_a})_{a=1}^b$ and $z=(q_n)_{n\in\mathcal N}$, and
use independent barred variables.  Set
$r=\norm q_1+\norm{\bar q}_1$.  Since $J$ is finite, tangential
Euclidean norms are bounded by $C_Jr$, and all normal $h^1$ norms are
bounded by $r$.  We separate the expression according to its normal
degree.  There are only finitely many choices of tangential indices,
signs, and positions of the normal variables.

\medskip
\noindent\textit{Step 1: terms of normal degree zero or one.}
At normal degree zero, the sum is finite and every denominator is
nonzero by \eqref{eq:all-integer-nr}.  It defines a quartic polynomial
with cubic vector field bounded by $C_{\omega,J}r^3$.

For normal degree one, fix the three tangential indices and the
position of the normal variable.  Up to fixed scalar factors, its
coefficient sequence has the form
\[
 f_n=\frac{g_n}{\pm\lambda_n+C},\qquad
 g_n=\int_0^\pi g(x)\phi_n(x)\dd x,\qquad
 g=\phi_a\phi_b\phi_c,\quad a,b,c\in J.
\]
Here $g\in H^1_0\subset L^2$.  The one-normal divisor bound and Bessel's
inequality give
\[
 \norm f_1^2
 =\sum_{n\in\mathcal N}\langle n\rangle^2
             \frac{|g_n|^2}{|\pm\lambda_n+C|^2}
 \le C_{\omega,J}\sum_{n\in\mathcal N}
                    \frac{|g_n|^2}{\langle n\rangle^2}
 \le C_{\omega,J}\norm g_{L^2}^2.
\]
The complex linear functional $\ell_f(z)=\sum_nf_nz_n$ is continuous,
since $|\ell_f(z)|\le\norm f_{\ell^2}\norm z_{\ell^2}$.
Its coordinate gradient is the sequence $f$, which belongs to $h^1$.
Multiplication by the three tangential factors therefore yields a
quartic Hamiltonian with vector field bounded by $C_{\omega,J}r^3$.

\medskip
\noindent\textit{Step 2: terms of normal degree two.}
Fix the two tangential indices and the locations of the two normal
variables.  The numerator is a multiplication-matrix entry
$\int\phi_a\phi_b\phi_n\phi_m$.  If both normal variables are unbarred
or both are barred, their eigenvalues have the same sign in the divisor,
so part~(i) of \cref{lem:divided-matrices} applies, after changing an
overall sign if necessary.  For one barred and one unbarred normal
variable, part~(ii) applies when $n\ne m$.  When $n=m$, the normal eigenvalues
cancel: a zero tangential divisor gives a trivial monomial, already
removed from the sum, and every other diagonal term has a fixed nonzero
tangential denominator and is covered by part~(iii).

For a matrix $K$ of one of these types, put
\[
 \mathcal B_K(z,\zeta)=\sum_{n,m\in\mathcal N}K_{nm}z_m\zeta_n
                     =\sum_{n\in\mathcal N}(Kz)_n\zeta_n.
\]
This is a complex bilinear pairing, with no complex conjugation; the
variables in the complexification must remain independent.  Since
$K:h^1\to h^1$ is bounded,
\[
 |\mathcal B_K(z,\zeta)|
 \le\norm{Kz}_{\ell^2}\norm\zeta_{\ell^2}
 \le C\norm z_1\norm\zeta_1.
\]
The gradients with respect to $\zeta$ and $z$ are $Kz$ and
$K^{\mathsf T}\zeta$, respectively.  Both lie in $h^1$ by the divided
matrix estimates.  The same conclusion holds if $K$ has a fixed complex
scalar factor.  If the two variables coincide, differentiation gives
$(K+K^{\mathsf T})z$, which is also bounded in $h^1$.

Multiplying this quadratic form by its two tangential factors gives a
quartic Hamiltonian.  A normal derivative is bounded by
$C(|w|+|\bar w|)^2(\norm z_1+\norm{\bar z}_1)$, and a tangential derivative by
$C(|w|+|\bar w|)(\norm z_1+\norm{\bar z}_1)^2$.
Thus every component is bounded by
$C_{\omega,J}r^3$.  The bilinear forms are the limits of their
finite-coordinate truncations: apply boundedness of $K$ to the
convergent truncations of $z$ and $\zeta$ in $h^1$.

\medskip
\noindent\textit{Step 3: extension, analyticity, and reality.}
Summing the finitely many classes of terms from Steps~1 and~2 defines a continuous
homogeneous quartic polynomial on $h^1\times h^1$.  Its coordinate
gradients are continuous homogeneous cubic polynomials taking values
in $h^1$.  In particular the Hamiltonian and its vector field are
holomorphic on the complexification, and the preceding estimates give
\eqref{eq:F-cubic}.  This extension agrees with \eqref{eq:F4} on
finitely supported sequences and is unique by density.

To check reality, exchange $(i,j)$ with $(k,l)$.
The real coefficient $P_{ijkl}$ is unchanged, while
$\Delta_{klij}=-\Delta_{ijkl}$.  Consequently
\[
 \overline{\left(-\frac{P_{ijkl}}{i\Delta_{ijkl}}\right)}
 =-\frac{P_{klij}}{i\Delta_{klij}}.
\]
On $\bar q=\overline q$, the corresponding monomials are conjugate.
The truncated sums are therefore real, and so is their continuous
extension.  The vector field preserves the relation $\bar q=\overline q$.

\medskip
\noindent\textit{Step 4: the homological equation.}
For $M_{ijkl}=q_iq_j\bar q_k\bar q_l$, use
$(H_2)_{q_n}=\lambda_n\bar q_n$ and
$(H_2)_{\bar q_n}=\lambda_nq_n$ in \eqref{eq:Poisson}.  Counting each
occurrence of an index, including repeated indices, gives
\begin{equation}\label{eq:H2-monomial}
 \{H_2,M_{ijkl}\}=i(\lambda_i+\lambda_j-\lambda_k-\lambda_l)M_{ijkl}
                  =i\Delta_{ijkl}M_{ijkl}.
\end{equation}
Thus the coefficient in \eqref{eq:F4} produces $-P_{ijkl}M_{ijkl}$.
The sum of these nontrivial monomials is precisely $Q_4^J$, so
\eqref{eq:homological} holds on finitely supported sequences.

It remains to extend this identity to the energy space.  Since
$|\lambda_n|\le C_\omega\langle n\rangle^2$, Cauchy--Schwarz yields
\[
 |\dd H_2(q,\bar q)[v,\bar v]|
 \le C_\omega(\norm q_1+\norm{\bar q}_1)
                 (\norm v_1+\norm{\bar v}_1).
\]
Together with \eqref{eq:F-cubic}, this shows that $\dd H_2[X_{F_4^J}]$ is a continuous quartic functional.  Both sides of the homological equation are continuous on $h^1\times h^1$,
so density proves the identity everywhere.  In particular, the bracket has the analytic cubic vector field
$-X_{Q_4^J}$.
\end{proof}

Let $\Phi_J^t$ denote the local flow of $X_{F_4^J}$.

\begin{proposition}[Partial quartic Birkhoff normal form]\label{prop:partial-BNF}
There is a neighborhood $U_{\omega,J}$ of the origin in $h^1\times h^1$ such that $\Phi_J^t$ is defined for $\abs t\le1$, is real analytic and symplectic, and
\begin{equation}\label{eq:BNF}
 H\circ\Phi_J^1
 =H_2+Z_4^J+P_4^{\ge3}+R_6^J,
\end{equation}
where $X_{R_6^J}$ is real analytic, vanishes to order five at the origin, and
\begin{equation}\label{eq:R6-bound}
 \norm{X_{R_6^J}(q,\bar q)}_{h^1\times h^1}
 \le C_{\omega,J}(\norm q_1+\norm{\bar q}_1)^5
\end{equation}
on a smaller neighborhood.
\end{proposition}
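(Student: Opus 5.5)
The plan is the standard Lie-transform argument, using \cref{prop:F-analytic} as the only input beyond elementary ODE theory in Banach spaces. \textbf{Existence and properties of the flow.} By \cref{prop:F-analytic}, $X_{F_4^J}$ is a homogeneous cubic polynomial on $h^1\times h^1$ (complexified) satisfying \eqref{eq:F-cubic}. Hence it is locally Lipschitz with Lipschitz constant $O(r^2)$ on the ball of radius $r$. The Picard iteration (Cauchy--Lipschitz in Banach spaces) then gives a flow $\Phi_J^t$ defined for $\abs t\le1$ on a ball of radius $r_0$, taking values in a ball of radius $2r_0$, once $C_{\omega,J}(2r_0)^2\le1/2$. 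Holomorphic dependence on initial data follows as in Step~1 of \cref{prop:eigen-analytic}: the Picard iterates are polynomials and converge locally uniformly, so the Weierstrass theorem applies. The estimate
\[
\Phi_J^t-\Id=\int_0^tX_{F_4^J}\circ\Phi_J^s\dd s=O(r^3)
\]
follows directly. Reality is inherited because, by Step~3 of \cref{prop:F-analytic}, the vector field preserves $\bar q=\overline q$.

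\textbf{Symplecticity.} For the weak form $\Om_{\mathrm{symp}}$, I would differentiate $(\Phi_J^t)^*\Om_{\mathrm{symp}}$ in $t$. Since $X_{F_4^J}$ is a bounded $C^1$ map into $h^1\times h^1$ with $\iota_X\Om=\dd F$, Cartan's formula gives derivative $(\Phi^t)^*\dd\dd F=0$. Rigorously, one compares $\Om(D\Phi^t\xi,D\Phi^t\eta)$ using the variational equation, whose $t$-derivative is $\dd^2F(\Phi^t)[D\Phi^t\xi,D\Phi^t\eta]$ minus its symmetric counterpart, i.e.\ zero.

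\textbf{Expansion of $H\circ\Phi^1$.} The subtle term is $H_2$, whose vector field is unbounded; I expect this to be the main obstacle. Avoid its vector field: $H_2$ is a continuous quadratic form on $h^1$, so $t\mapsto H_2(\Phi^t)$ is $C^1$ with derivative $\dd H_2(\Phi^t)[X_{F_4^J}(\Phi^t)]=\{H_2,F_4^J\}\circ\Phi^t=-Q_4^J\circ\Phi^t$ by \eqref{eq:homological}. Similarly, $\frac{d}{dt}G\circ\Phi^t=\{G,F_4^J\}\circ\Phi^t$ for $G=P_4$ or $Q_4^J$; these brackets are continuous sextic polynomials with quintic vector fields, using \eqref{eq:P4-cubic} and \eqref{eq:F-cubic} and the chain rule. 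Integrating gives
\[
H_2\circ\Phi^1=H_2-\int_0^1Q_4^J\circ\Phi^t\dd t,\qquad
Q_4^J\circ\Phi^t=Q_4^J+\int_0^t\{Q_4^J,F_4^J\}\circ\Phi^s\dd s,
\]
together with $P_4\circ\Phi^1=P_4+\int_0^1\{P_4,F_4^J\}\circ\Phi^s\dd s$. Combining these with $P_4=Z_4^J+Q_4^J+P_4^{\ge3}$ yields \eqref{eq:BNF} with
\[
R_6^J=\int_0^1\{P_4,F_4^J\}\circ\Phi^s\dd s-\int_0^1\!\!\int_0^t\{Q_4^J,F_4^J\}\circ\Phi^s\dd s\dd t.
\]

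\textbf{Bounds on the remainder.} It remains to bound $X_{R_6^J}$. For a sextic $G$ with analytic quintic field, $X_{G\circ\Phi^s}=(D\Phi^s)^{-1}X_G\circ\Phi^s$ by symplecticity, where $(D\Phi^s)^{-1}=D\Phi^{-s}$. On the small ball, $D\Phi^{\pm s}=\Id+O(r^2)$ by Cauchy estimates for the analytic flow. This gives \eqref{eq:R6-bound}, and differentiating under the integral preserves analyticity. Vanishing to order five follows since $X_{R_6^J}=O(r^5)$ on a ball and the field is analytic.
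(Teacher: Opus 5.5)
Your proposal is correct and follows essentially the same route as the paper. The paper also builds the flow from the cubic bound on $X_{F_4^J}$, proves symplecticity through the variational equation, treats $H_2$ only through $\dd H_2[X_{F_4^J}]$ and the homological equation, and bounds the remainder via $X_{S\circ\Phi^t}=(D\Phi^t)^{-1}X_S\circ\Phi^t$; the only cosmetic differences are that it gets $\norm{D\Phi^t(w)-\Id}\le C\norm w^2$ from Gronwall rather than Cauchy estimates, and writes your iterated integral $\int_0^1\int_0^t\{Q_4^J,F_4^J\}\circ\Phi^s\dd s\dd t$ in the equivalent Fubini form $\int_0^1(1-t)\{Q_4^J,F_4^J\}\circ\Phi^t\dd t$.
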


\begin{proof}
Abbreviate $F=F_4^J$, $Q=Q_4^J$, and $\Phi^t=\Phi_J^t$.
In this proof write $w=(q,\bar q)$ and
$\norm w=\norm q_1+\norm{\bar q}_1$.

\medskip
\noindent\textit{Step 1: the flow and its differential.}
The preceding proposition gives
\[
 \norm{X_F(w)}\le C\norm w^3,\qquad
 \norm{DX_F(w)}\le C\norm w^2.
\]
The second estimate follows by differentiating the bounded cubic
polynomial.  The local existence theorem for analytic ordinary
differential equations in a Banach space applies to $X_F$.
To obtain a common time interval, choose $r_0>0$ small enough that
$8Cr_0^2<1$.  For initial data $\norm w\le r_0$, the integral equation
and a continuation argument give
\[
 \norm{\Phi^t(w)}\le2\norm w,\qquad
 \norm{\Phi^t(w)-w}\le C\norm w^3,
 \qquad |t|\le1,
\]
after increasing $C$ if necessary.
Analytic dependence on initial data gives a real analytic flow, with a
holomorphic extension to a smaller complex ball.  The variational equation
\[
 \frac{\dd}{\dd t}D\Phi^t(w)
 =DX_F(\Phi^t(w))D\Phi^t(w),\qquad D\Phi^0(w)=\Id,
\]
and Gronwall's inequality give
\[
 \norm{D\Phi^t(w)-\Id}\le C\norm w^2.
\]
On a smaller ball these differentials are invertible with uniformly
bounded inverses.  The flow identity also provides the inverse map
$\Phi^{-t}$ on the image.  Reality of $F$ implies that the flow preserves $\bar q=\overline q$.

\medskip
\noindent\textit{Step 2: preservation of the weak symplectic form.}
For fixed tangent vectors $U,V$, put
$U_t=D\Phi^t(w)U$ and $V_t=D\Phi^t(w)V$.
Differentiating $\Om_{\mathrm{symp}}(X_F,\cdot)=\dd F$ in a constant
direction gives
$\Om_{\mathrm{symp}}(DX_F U,V)=D^2F[U,V]$.
The variational equation and skew symmetry of $\Om_{\mathrm{symp}}$
therefore yield
\[
 \begin{aligned}
 \frac{\dd}{\dd t}\Om_{\mathrm{symp}}(U_t,V_t)
 &=\Om_{\mathrm{symp}}(DX_F U_t,V_t)
   +\Om_{\mathrm{symp}}(U_t,DX_F V_t)\\
 &=D^2F[U_t,V_t]-D^2F[V_t,U_t]=0.
 \end{aligned}
\]
Thus $(\Phi^t)^*\Om_{\mathrm{symp}}=\Om_{\mathrm{symp}}$.

\medskip
\noindent\textit{Step 3: an exact integral formula for the remainder.}
Using the scalar interpretation of the homological equation, the
chain rule along $\Phi^t$ gives
\[
 \frac{\dd}{\dd t}(H_2\circ\Phi^t)=-Q\circ\Phi^t,
 \qquad
 \frac{\dd}{\dd t}(P_4\circ\Phi^t)=\{P_4,F\}\circ\Phi^t.
\]
Applying the chain rule to $Q$ and integrating once more gives
\begin{equation}\label{eq:Lie-Taylor}
 H_2\circ\Phi^1
 =H_2-Q-\int_0^1(1-t)\{Q,F\}\circ\Phi^t\dd t.
\end{equation}
Combining this with the first-order integral formula for $P_4$ gives
\[
 H\circ\Phi^1=H_2+P_4-Q+R_6^J,
\]
where the exact remainder is
\[
 R_6^J
 =\int_0^1\{P_4,F\}\circ\Phi^t\dd t
   -\int_0^1(1-t)\{Q,F\}\circ\Phi^t\dd t.
\]
Since $P_4-Q=Z_4^J+P_4^{\ge3}$, this is \eqref{eq:BNF}.

\medskip
\noindent\textit{Step 4: the vector field of the remainder.}
If $G$ and $F$ have bounded Hamiltonian vector fields, differentiation
of $\{G,F\}=\dd G[X_F]$, using symmetry of the Hessians, gives
\[
 X_{\{G,F\}}=DX_G X_F-DX_F X_G.
\]
More explicitly, pairing the right-hand side with a vector $V$ by
$\Om_{\mathrm{symp}}$ gives
$D^2G[X_F,V]+\dd G[DX_F V]=\dd(\dd G[X_F])[V]$.
For $G=P_4$ and $G=Q$,
both vector fields are cubic and their differentials are quadratic.
Thus the two brackets are sextic Hamiltonians with analytic quintic
vector fields satisfying
\[
 \norm{X_{\{G,F\}}(w)}\le C\norm w^5.
\]

We next estimate the vector fields after composition with the flow.  Symplecticity and the chain rule show that for either bracket
$S=\{G,F\}$,
\[
 X_{S\circ\Phi^t}(w)
   =(D\Phi^t(w))^{-1}X_S(\Phi^t(w)).
\]
Indeed, pairing this vector with $V$ equals
$\Om_{\mathrm{symp}}(X_S(\Phi^t(w)),D\Phi^t(w)V)
=\dd(S\circ\Phi^t)(w)[V]$.
Step~1 bounds the inverse differential and the size of $\Phi^t(w)$,
so $\norm{X_{S\circ\Phi^t}(w)}\le C\norm w^5$, uniformly for
$0\le t\le1$.  These maps are analytic on a common smaller complex
ball, with uniform bounds that justify integration and differentiation
under the integral sign.
The displayed formula for $R_6^J$ consequently proves
\eqref{eq:R6-bound}.  The same bound implies that its Taylor terms of degree less than five
vanish.
\end{proof}

\section{KAM construction}\label{sec:KAM}

\subsection{An abstract KAM theorem}\label{sec:Poschel}
We record the consequence of P\"oschel's Theorems A and B and Corollary C
that will be used below~\cite{Poschel1996}.  We take the exponential
sequence weight in that paper to be zero.  The complex $(z,\bar z)$
notation is the standard complexification of P\"oschel's real normal
coordinates; the corresponding weighted norms are equivalent.  For $d>1$, P\"oschel's regularity assumption allows $\bar p=p$; thus the
bounded case $p=\bar p=1$ used here is included.  Let
$\Pi\subset\R^b$ be compact with positive Lebesgue measure and consider
\[
 N=\ip{\omega(\xi)}y+\sum_{r\ge1}\Omega_r(\xi)z_r\bar z_r
\]
on $\T^b\times\R^b\times h^p\times h^p$, where
$h^p=\{z:\norm z_p^2=\sum_{r\ge1}\langle r\rangle^{2p}|z_r|^2<\infty\}$
for $p\ge0$.  For integer multi-indices, $|k|=\sum_a|k_a|$ and
$|l|=\sum_r|l_r|$.  A lipeomorphism is a bijection onto its image
for which both the map and its inverse are Lipschitz.
For a Lipschitz map $f$ on $\Pi$, write $\norm f_\Pi^{\mathcal L}$ for
its supremum norm plus Lipschitz seminorm.  Put
\[
 \norm w_{-\delta}=\sup_{r\ge1}r^{-\delta}\abs{w_r}.
\]

\begin{theorem}[P\"oschel, specialized bounded-perturbation form]\label{thm:Poschel}
Assume the following.
\begin{enumerate}[label=\textup{(P\arabic*)}]
\item The map $\xi\mapsto\omega(\xi)$ is a lipeomorphism onto its image.  For every $(k,l)\ne0$ with $k\in\Z^b$, $l\in\Z^{(\N)}$, and $\abs l\le2$, the zero set of
\[
 \xi\longmapsto k\cdot\omega(\xi)+l\cdot\Omega(\xi)
\]
has Lebesgue measure zero; moreover $l\cdot\Omega(\xi)\ne0$ on $\Pi$ whenever $1\le\abs l\le2$.
\item There are $d>1$, $0\le\delta<d-1$, and a fixed parameter-independent sequence
\[
 \bar\Omega_r=r^d+\sum_{j=1}^{s_0}a_jr^{d_j},
 \qquad d>d_1>\cdots>d_{s_0},
\]
where $s_0\ge0$, the coefficients and exponents are fixed real numbers,
and finitely many entries may be assigned arbitrary fixed values, such that $\widetilde\Omega=\Omega-\bar\Omega$ is Lipschitz from $\Pi$ to $\ell^\infty_{-\delta}$.
\item The perturbation $P$ is real analytic in the phase variables, Lipschitz in $\xi$, and $X_P$ maps the phase space with index $p$ analytically into one with index $\bar p\ge p$.
\end{enumerate}
Assume bounds
\begin{equation}\label{eq:ML-bounds}
 \norm\omega_\Pi^{\mathcal L}
 +\norm{\widetilde\Omega}_{-\delta,\Pi}^{\mathcal L}
 \le M,
 \qquad
 \norm{\omega^{-1}}_{\omega(\Pi)}^{\mathcal L}\le L.
\end{equation}
Fix a strip width $s>0$ and choose
$\tau\ge b+1+2/(d-1)$, as in equation~(22) of
\cite{Poschel1996}, so that the geometric resonance sums converge.  On
\[
 D(s,r)=\{\abs{\Im x}<s,\ \abs y<r^2,\ \norm z_p+\norm{\bar z}_p<r\}
\]
use
\begin{equation}\label{eq:scaled-norm-general}
 \norm X_r
 =\sup_{D(s,r)}
 \left(\abs{X^x}+r^{-2}\abs{X^y}
 +r^{-1}(\norm{X^z}_{\bar p}+\norm{X^{\bar z}}_{\bar p})\right).
\end{equation}
There is a constant $\gamma>0$ such that, if $0<\alpha\le1$ and
\begin{equation}\label{eq:Poschel-smallness}
 \eps_P:=\norm{X_P}_r
 +\frac\alpha M\norm{X_P}^{\Lip}_r
 \le\gamma\alpha,
\end{equation}
then $N+P$ has a Cantor family of real-analytic, linearly stable invariant $b$-tori over a set $\Pi_\alpha\subset\Pi$.

If the unperturbed frequency maps are affine and $d>1$, then, after
replacing $\gamma$ by the smaller constant from Corollary C and taking
$\alpha>0$ sufficiently small,
\begin{equation}\label{eq:Poschel-measure}
 \abs{\Pi\setminus\Pi_\alpha}
 \le C(\operatorname{diam}\Pi)^{b-1}\alpha.
\end{equation}
The constants depend on $b,p,\bar p,d,\delta,\tau,s,M,L$, the fixed
asymptotic data, and the finitely many affine nondegeneracy constants that
occur in Theorem~B.
\end{theorem}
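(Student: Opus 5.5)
The statement is a specialization of results proved in \cite{Poschel1996}, so the plan is a careful translation rather than a new KAM iteration. I would fix P\"oschel's exponential weight $a=0$. I would then identify his real normal coordinates $(u,v)$ with the complex coordinates through $z=(u-iv)/\sqrt2$, $\bar z=(u+iv)/\sqrt2$. This maps his weighted spaces $\ell^{a,p}$ isometrically, up to a fixed factor, onto $h^p\times h^p$. It also maps his normal form $\sum\Omega_r(u_r^2+v_r^2)/2$ to $\sum\Omega_r z_r\bar z_r$, and turns his symplectic structure into the form \eqref{eq:symp} used here. Consequently his weighted norm of $X_P$ on $D(s,r)$ is equivalent to \eqref{eq:scaled-norm-general} with constants depending only on $p,\bar p$. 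Real analyticity in his sense corresponds to the reality condition $\bar z=\overline z$ in ours.

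Next I would match the hypotheses one by one. (P1) is his assumption A on the frequency map. It requires a lipeomorphism together with the measure-zero condition on the resonance zero sets for $|l|\le2$ and the nonvanishing of $l\cdot\Omega$ for $1\le|l|\le2$; these are exactly the conditions his measure argument uses to exclude resonance zones. (P2) is his assumption B: the asymptotics $\Omega_r=r^d+\dots+\widetilde\Omega_r$ with $\widetilde\Omega$ Lipschitz into $\ell^\infty_{-\delta}$. Here I would check that finitely many modified entries are harmless. They are absorbed into $\widetilde\Omega$, since changing finitely many coordinates preserves membership in $\ell^\infty_{-\delta}$ and the Lipschitz bound, at the cost of enlarging $M$. (P3) is his regularity assumption C. The point I must confirm is that for $d>1$ his proof allows $\bar p=p$, because the homological equation for the $z\bar z$-terms gains a full factor $|\Omega_r-\Omega_s|\gtrsim |r^d-s^d|$. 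That gain compensates for the absence of smoothing in the perturbation.

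With this dictionary, Theorem A gives the Cantor family of real-analytic, linearly stable tori under the smallness condition \eqref{eq:Poschel-smallness}. The parameters $\tau$ and $\gamma$ are chosen as in his equation (22). For the measure bound I would invoke Theorem B, whose estimate is expressed through $M,L$ and the nondegeneracy of the finitely many low-order resonance functions. When $\omega$ and $\Omega$ are affine in $\xi$, Corollary C reduces this to $|\Pi\setminus\Pi_\alpha|\le C(\operatorname{diam}\Pi)^{b-1}\alpha$ for small $\alpha$, which is \eqref{eq:Poschel-measure}. The constants there depend on the affine nondegeneracy data, as recorded in the statement.

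The main obstacle is the bounded case $p=\bar p$ together with the norm translation. P\"oschel's headline statement emphasises $\bar p>p$ in some settings, so I would have to locate in his estimates of the homological equation, and of the frequency shifts in $\ell^\infty_{-\delta}$, that only $d>1$ and $\delta<d-1$ are used. I would also need to check that the scaled norm \eqref{eq:scaled-norm-general}, measured with the same index on both sides, is exactly the quantity his iteration controls.
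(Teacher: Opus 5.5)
Your plan matches the paper, which also does not re-prove anything: it reads the statement off P\"oschel's Theorem A, Theorem B and Corollary C, using zero exponential weight, complexified normal coordinates with equivalent norms, and the affine $d>1$ measure estimate. The case $\bar p=p$, which you flag as the main obstacle, is explicitly allowed for $d>1$ in his Assumption C, so nothing needs to be re-derived from the homological equation. There is one sign slip to fix: with $z=(u-iv)/\sqrt2$ one gets $i\,dz\wedge d\bar z=-du\wedge dv$ and $\dot u=-\partial_vH$. To match \eqref{eq:symp}, the convention $i\dot q_n=\partial_{\bar q_n}H$, and P\"oschel's $\dot u=P_v$, you should take $z=(u+iv)/\sqrt2$.
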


The estimate in \eqref{eq:Poschel-measure} is the $d>1$ affine-frequency
case discussed immediately after Corollary~C in~\cite{Poschel1996}.  Since
our parameter domains shrink to the origin, the uniformity of its constant
is established in \cref{lem:uniform-shrinking-boxes}, after verification of the frequency hypotheses.

\subsection{Verification of the hypotheses and proof}

\paragraph{Action--angle variables and frequencies.}
Fix $J=\{j_1,\dots,j_b\}$ and introduce, in the complexification,
\begin{equation}\label{eq:action-angle}
 q_{j_a}=\sqrt{\xi_a+y_a}\,e^{-ix_a},
 \qquad
 \bar q_{j_a}=\sqrt{\xi_a+y_a}\,e^{ix_a},
 \qquad a=1,\dots,b,
\end{equation}
while $q_n=z_n$, $\bar q_n=\bar z_n$ for $n\in\mathcal N$.  With the signs chosen in \eqref{eq:action-angle}, direct calculation gives
\begin{equation}\label{eq:AA-symplectic}
 i\,\dd q_{j_a}\wedge\dd\bar q_{j_a}
 =\dd x_a\wedge\dd y_a,
\end{equation}
so $\dot x=\partial_yH$ and $\dot y=-\partial_xH$.

Let
\[
 \widetilde H=H\circ\Phi_J^1.
\]
Discarding constants depending only on $\xi$, \eqref{eq:BNF}, \eqref{eq:ZTT}, and \eqref{eq:ZTN} give
\begin{equation}\label{eq:Nxi}
 N_\xi=\ip{\omega(\xi)}y
 +\sum_{n\in\mathcal N}\Omega_n(\xi)z_n\bar z_n,
\end{equation}
where
\begin{equation}\label{eq:frequencies}
 \omega(\xi)=\lambda_J+A_J\xi,
 \qquad
 \Omega_n(\xi)=\lambda_n+B_n\cdot\xi.
\end{equation}
The remaining perturbation is
\begin{equation}\label{eq:Pnu}
 P_\nu=P_{yy}+P_{yz^2}+P_4^{\ge3}+R_6^J,
\end{equation}
with
\begin{equation}\label{eq:Pyy-Pyz}
 P_{yy}=\frac12\ip{A_Jy}{y},
 \qquad
 P_{yz^2}=\sum_{n\in\mathcal N}(B_n\cdot y)z_n\bar z_n.
\end{equation}

\paragraph{Exact resonance nonidentities and a uniform second-Melnikov gap.}

\begin{lemma}\label{lem:global-two-mode-gap}
There is $d_\omega>0$ such that
\begin{equation}\label{eq:domega}
 \inf_{n\in\mathcal N}\abs{\lambda_n}
 \wedge
 \inf_{n,m\in\mathcal N}\abs{\lambda_n+\lambda_m}
 \wedge
 \inf_{\substack{n,m\in\mathcal N\\n\ne m}}\abs{\lambda_n-\lambda_m}
 \ge d_\omega.
\end{equation}
\end{lemma}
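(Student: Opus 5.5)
The plan is to prove each of the three infima in \eqref{eq:domega} positive separately and then take $d_\omega$ as their minimum. The common strategy is that the high-energy asymptotics of \cref{prop:spectral-estimates} handle all large indices uniformly. Exact nonresonance, \cref{thm:universal-nr}, then shows that the finitely many remaining quantities are nonzero, and a finite set of nonzero reals has a positive minimum in absolute value.

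\emph{Single eigenvalues.} By \eqref{eq:rough-growth}, $\lambda_n\ge\tfrac12 n^2\ge\tfrac12$ for $n\ge N_\omega$. For $n<N_\omega$, apply \eqref{eq:all-integer-nr} with $c=e_n$ to get $\lambda_n\ne0$. Hence $\inf_{n}\abs{\lambda_n}\ge\min(\tfrac12,\min_{n<N_\omega}\abs{\lambda_n})>0$.

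\emph{Sums of two eigenvalues.} If $\max(n,m)\ge N'$ for suitable $N'\ge N_\omega$, write $\lambda_n+\lambda_m\ge\tfrac12\max(n,m)^2+\min_k\lambda_k$. The spectrum is bounded below, since $\lambda_1>-\infty$ by the form bound \eqref{eq:form-equivalence}. Choosing $N'$ so that $\tfrac12 N'^2+\lambda_1\ge1$ makes these sums at least $1$. The remaining pairs with $n,m<N'$ are finitely many. For each of them, $\lambda_n+\lambda_m$ is a nontrivial integer combination: its coefficient vector is $e_n+e_m\ne0$, including the case $n=m$, where it equals $2e_n$. So it is nonzero by \eqref{eq:all-integer-nr}.

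\emph{Differences with $n\ne m$.} Assume $m<n$ by symmetry. Then $\lambda_n-\lambda_m\ge\lambda_n-\lambda_{n-1}$, because the spectrum is strictly increasing. For $n-1\ge N_\omega$, \eqref{eq:rough-growth} gives $\lambda_n-\lambda_{n-1}\ge n-1\ge1$. The remaining pairs have $m<n\le N_\omega$. There are finitely many of them, and each has coefficient vector $e_n-e_m\ne0$, so \eqref{eq:all-integer-nr} shows it is nonzero; simplicity of the spectrum gives the same conclusion directly.

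Restricting the indices to $\mathcal N$ only shrinks the index sets, so the infima can only increase. The constant can therefore be taken independent of $J$. There is no real obstacle here. The one point needing care is the lower bound of the spectrum in the sum case, where negative low eigenvalues could otherwise cancel a large one. This is handled by the semiboundedness noted above, together with \eqref{eq:rough-growth}, which makes $\lambda_n$ eventually dominate $\abs{\lambda_1}$.
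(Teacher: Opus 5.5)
Your proposal is correct and follows essentially the same argument as the paper. High-energy growth disposes of all large indices, using the lower bound $\lambda_1$ for sums and the consecutive-gap bound for differences. The finitely many remaining values are nonzero by \eqref{eq:all-integer-nr}, so their minimum together with a fixed positive constant gives a $J$-independent $d_\omega$. The only cosmetic difference is that you invoke the explicit bounds \eqref{eq:rough-growth}, whereas the paper phrases the same step as divergence to $+\infty$ via \eqref{eq:gap-asymptotic}.
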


\begin{proof}
The three families have only finitely many elements of absolute value
at most $1$.  Indeed, $\lambda_n\to+\infty$; for sums, with
$N=\max(n,m)$,
\[
 \lambda_n+\lambda_m\ge\lambda_N+\lambda_1\longrightarrow+\infty;
\]
and for differences with $n>m$,
\[
 \lambda_n-\lambda_m\ge\lambda_n-\lambda_{n-1}\longrightarrow+\infty
\]
by \eqref{eq:gap-asymptotic}.  All the remaining finitely many values
are nonzero by \eqref{eq:all-integer-nr}.  Their absolute values,
together with $1$, therefore have a positive minimum $d_\omega$.
This argument applies to the full spectrum, so $d_\omega$ is independent
of $J$.
\end{proof}

\begin{proposition}[Verification of (P1)]\label{prop:P1}
There is $\nu_*=\nu_*(\omega,J)>0$ such that the frequencies
\eqref{eq:frequencies} satisfy (P1) on the reference box
$[0,2\nu_*]^b$, and hence on every $\Pi_\nu$ with
$0<\nu<\nu_*$.
\end{proposition}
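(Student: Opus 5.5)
The plan is to verify the three parts of (P1) separately: the lipeomorphism property, the measure-zero resonance property, and the nonvanishing of $l\cdot\Omega$. Throughout we use that the frequencies \eqref{eq:frequencies} are affine in $\xi$.

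\emph{Lipeomorphism.} Since $\omega(\xi)=\lambda_J+A_J\xi$ and $\det A_J\ne0$ by \cref{thm:twist}, the map $\omega$ is an affine bijection of $\R^b$. Its inverse is affine with Lipschitz constant $\norm{A_J^{-1}}$, so this part holds on any box.

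\emph{Measure-zero resonance sets.} Fix $(k,l)\ne0$ with $\abs l\le2$. The function
\[
 \xi\mapsto k\cdot\omega(\xi)+l\cdot\Omega(\xi)
 =\Bigl(k\cdot\lambda_J+\sum_n l_n\lambda_n\Bigr)+\Bigl(A_Jk+\sum_nl_nB_n\Bigr)\cdot\xi
\]
is affine, using that $A_J$ is symmetric. An affine function has a null zero set unless it vanishes identically. If it vanishes identically, then in particular the constant term vanishes. But $k\cdot\lambda_J+\sum l_n\lambda_n$ is a finitely supported integer combination of eigenvalues, because $J\cap\mathcal N=\emptyset$ so no index is repeated. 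Its coefficient sequence is nonzero whenever $(k,l)\ne0$ after cancellation. The exception is when $l$ has the form $e_n-e_n$, but $l$ is counted as an element of $\Z^{(\N)}$, so such an $l$ is simply zero. Hence \eqref{eq:all-integer-nr} forces the constant term to be nonzero, and the zero set is in fact a hyperplane or empty. No smallness of $\nu$ is needed here.

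\emph{Nonvanishing of $l\cdot\Omega$ for $1\le\abs l\le2$.} This is where $\nu_*$ enters, and it is the step needing care because $\mathcal N$ is infinite. The cases are $\Omega_n$, $\Omega_n+\Omega_m$, and $\Omega_n-\Omega_m$ with $n\ne m$, up to sign. By \cref{lem:global-two-mode-gap}, $\abs{l\cdot\lambda_{\mathcal N}}\ge d_\omega$ uniformly over all such $l$. The $\xi$-dependent part is bounded by \eqref{eq:Bn-bound}:
\[
 \abs{l\cdot B\cdot\xi}\le 2\sup_n\abs{B_n}\,\abs\xi\le 2C_{\omega,J}\sqrt b\,2\nu_*.
\]
Choosing $\nu_*$ so small that this is at most $d_\omega/2$ gives $\abs{l\cdot\Omega(\xi)}\ge d_\omega/2>0$ on $[0,2\nu_*]^b$, uniformly in $n,m$. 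The key point is that the uniform gap of \cref{lem:global-two-mode-gap} and the uniform bound \eqref{eq:Bn-bound} together avoid any dependence on the infinitely many normal modes.

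Since $\Pi_\nu\subset[0,2\nu_*]^b$ for $\nu<\nu_*$, all three properties restrict to $\Pi_\nu$, which proves the proposition.
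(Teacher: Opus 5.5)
Your proposal is correct and follows essentially the same route as the paper: invertibility of $A_J$ gives the lipeomorphism, exact nonresonance \eqref{eq:all-integer-nr} makes the constant term of each affine resonance function nonzero, and the uniform gap $d_\omega$ of \cref{lem:global-two-mode-gap} together with the uniform bound \eqref{eq:Bn-bound} determines $\nu_*$. The only differences are cosmetic: you bound the $\xi$-dependent part via $\sup_n\abs{B_n}$ and the Euclidean norm of $\xi$, whereas the paper uses the $\ell^1$ constant $D_B$ and $\abs{\xi}_\infty$.
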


\begin{proof}
Since $A_J$ is invertible on the fixed path,
\[
 \omega^{-1}(\eta)=A_J^{-1}(\eta-\lambda_J)
\]
on the image of any parameter box.  The frequency map and its inverse
have Lipschitz constants $\norm{A_J}$ and $\norm{A_J^{-1}}$,
independently of the size of that box.  This proves the lipeomorphism
requirement.

Fix $(k,l)\ne0$ with $|l|\le2$.  Its resonance function is
\[
 \begin{aligned}
 f_{k,l}(\xi)
 &=k\cdot\omega(\xi)+\sum_{n\in\mathcal N}l_n\Omega_n(\xi)\\
 &=k\cdot\lambda_J+\sum_{n\in\mathcal N}l_n\lambda_n
   +\left(A_J^{\mathsf T}k+\sum_{n\in\mathcal N}l_nB_n\right)\cdot\xi.
 \end{aligned}
\]
Since $J$ and $\mathcal N$ are disjoint, the constant term is a
nonzero integer spectral combination by \eqref{eq:all-integer-nr}.
Thus $f_{k,l}$ is not identically zero, and its zero set is either
empty or an affine hyperplane, hence has Lebesgue measure zero.

For the remaining requirement, $1\le|l|\le2$ means that, up to an
overall sign, $l\cdot\lambda$ is one of
\[
 \lambda_n,\quad 2\lambda_n,\quad
 \lambda_n+\lambda_m,\quad
 \lambda_n-\lambda_m\ (n\ne m).
\]
Their absolute values are at least $d_\omega$ by the preceding lemma.
Define the finite constant
\[
 D_B=\sup_{n\in\mathcal N}\sum_{a=1}^b|(B_n)_a|.
\]
For $\xi\in[0,2\nu_*]^b$,
\[
 \left|\sum_nl_nB_n\cdot\xi\right|
 \le |l|D_B|\xi|_\infty\le4D_B\nu_*.
\]
Choose, for instance,
$0<\nu_*\le\min(1,d_\omega/[8(1+D_B)])$.  The triangle inequality gives
\[
 |l\cdot\Omega(\xi)|\ge d_\omega-4D_B\nu_*\ge d_\omega/2>0
\]
uniformly in all such $l$ and all $\xi$ in the reference box.
This proves (P1) on the reference box and its restrictions.
\end{proof}

\paragraph{Spectral asymptotics and uniform frequency constants.}
Enumerate the cofinite normal set increasingly,
\[
 \mathcal N=\{n_1<n_2<\cdots\}.
\]
For all sufficiently large $r$, $n_r=r+b$; in particular $n_r\asymp r$, so the reindexed normal $h^1$ norm is equivalent to the original one.  Define a fixed parameter-independent sequence $\bar\Omega$ by setting
\[
 \bar\Omega_r=(r+b)^2
\]
for all sufficiently large $r$, and assigning arbitrary fixed values to its finitely many initial entries (for instance $n_r^2$).  Then $\bar\Omega_r=r^2+2br+b^2$ for large $r$.

\begin{proposition}[Verification of (P2)]\label{prop:P2}
For every fixed $0<\delta<1$, the normal frequencies satisfy (P2) with $d=2$.
\end{proposition}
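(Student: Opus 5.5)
We verify (P2) by writing $\widetilde\Omega_r(\xi)=\Omega_{n_r}(\xi)-\bar\Omega_r$ and splitting it into a parameter-independent spectral part and an affine parameter part:
\[
 \widetilde\Omega_r(\xi)=\bigl(\lambda_{n_r}-\bar\Omega_r\bigr)+B_{n_r}\cdot\xi .
\]
We take $d=2$, $s_0=2$, $a_1=2b$, $d_1=1$, $a_2=b^2$, $d_2=0$. This gives $\bar\Omega_r=r^2+2br+b^2$ for large $r$, with the finitely many initial entries fixed as allowed in (P2). The constraint $0\le\delta<d-1=1$ then matches the range $0<\delta<1$ in the statement.

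\emph{Spectral part.} For all sufficiently large $r$ we have $n_r=r+b$, hence $\bar\Omega_r=n_r^2$. By \eqref{eq:eig-asymptotic} with $\eps=\delta$ (the event $\Omega_{\mathrm{sp}}$ covers every $\eps>0$),
\[
 |\lambda_{n_r}-\bar\Omega_r|=|\lambda_{n_r}-n_r^2|\le K_{\omega,\delta}\,n_r^{\delta}\le C_{\omega,J,\delta}\,r^{\delta}.
\]
The last step uses $n_r\asymp r$. The finitely many initial indices contribute a bounded quantity, which is at most $C r^\delta$ after enlarging the constant. Hence $\sup_r r^{-\delta}|\lambda_{n_r}-\bar\Omega_r|<\infty$, i.e.\ this constant sequence lies in $\ell^\infty_{-\delta}$. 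This step is where the stochastic correction must be absorbed. It is the only real point, and it works because \eqref{eq:eig-asymptotic} holds for every positive $\eps$, in particular for $\eps=\delta$.

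\emph{Parameter part.} By \eqref{eq:Bn-bound}, $\sup_n|B_n|\le C_{\omega,J}$. Therefore $\sup_r r^{-\delta}|B_{n_r}\cdot\xi|\le C_{\omega,J}|\xi|$. Because the dependence on $\xi$ is linear, the Lipschitz seminorm from $\Pi_\nu$ (or the reference box $[0,2\nu_*]^b$) into $\ell^\infty_{-\delta}$ is bounded by $\sup_n|B_n|$. Thus $\widetilde\Omega$ is Lipschitz from $\Pi$ into $\ell^\infty_{-\delta}$, with a norm bound uniform in $\nu<\nu_*$. This bound contributes to the constant $M$ in \eqref{eq:ML-bounds}.

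Finally, the reindexing $r\mapsto n_r$ only shifts indices by the constant $b$ for large $r$. It therefore preserves the weighted norms up to constants, so the statement transfers to the original labelling of $\mathcal N$.
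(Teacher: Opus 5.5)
Your proposal is correct and follows essentially the same route as the paper: you split $\widetilde\Omega_r(\xi)$ into the constant spectral remainder $\lambda_{n_r}-\bar\Omega_r$, controlled in $\ell^\infty_{-\delta}$ by \eqref{eq:eig-asymptotic}, and the affine part $B_{n_r}\cdot\xi$, whose Lipschitz constant into $\ell^\infty_{-\delta}$ is at most $\sup_n|B_n|$ uniformly on the reference box. The only cosmetic differences are that you take $\eps=\delta$ where the paper takes $\eps<\delta$ (both are valid on $\Omega_{\mathrm{sp}}$), and that you write out the asymptotic data $s_0=2$, $(a_1,d_1)=(2b,1)$, $(a_2,d_2)=(b^2,0)$ explicitly.
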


\begin{proof}
Fix $0<\eps<\delta<1$.  By \eqref{eq:eig-asymptotic} and the chosen
sequence $\bar\Omega$,
\[
 |\lambda_{n_r}-\bar\Omega_r|\le C_{\omega,J,\eps}r^\eps,
 \qquad
 C_\lambda:=\sup_{r\ge1}r^{-\delta}
                 |\lambda_{n_r}-\bar\Omega_r|<\infty,
\]
after increasing the constant to include the finitely many initial
entries.
Let $C_B=\sup_{n\in\mathcal N}|B_n|<\infty$ and
$R_*=\sup_{\xi\in[0,2\nu_*]^b}|\xi|$.
For $\widetilde\Omega_r(\xi)
=\lambda_{n_r}-\bar\Omega_r+B_{n_r}\cdot\xi$ we have
\[
 \sup_{\xi\in[0,2\nu_*]^b}\norm{\widetilde\Omega(\xi)}_{-\delta}
 \le C_\lambda+C_BR_*.
\]
For two parameters $\xi,\eta$ in this box,
\[
 \begin{aligned}
 \norm{\widetilde\Omega(\xi)-\widetilde\Omega(\eta)}_{-\delta}
 &=\sup_{r\ge1}r^{-\delta}|B_{n_r}\cdot(\xi-\eta)|\\
 &\le C_B|\xi-\eta|.
 \end{aligned}
\]
Thus the remainder is Lipschitz into $\ell^\infty_{-\delta}$ on the
fixed reference box, and therefore on all smaller action boxes.
Since $d=2$ and $0<\delta<d-1$, this proves (P2).
\end{proof}

Set
\[
 \Pi^0=[0,2\nu_*]^b.
\]
Fix constants
\begin{equation}\label{eq:correct-M-L}
 \mathfrak M
 \ge1+\norm\omega_{\Pi^0}^{\mathcal L}
 +\norm{\widetilde\Omega}_{-\delta,\Pi^0}^{\mathcal L},
 \qquad
 \mathfrak L\ge1+\norm{\omega^{-1}}_{\omega(\Pi^0)}^{\mathcal L}.
\end{equation}
These constants depend only on $(\omega,J,\delta)$ and provide the
bounds $M,L$ in \cref{thm:Poschel} uniformly on all $\Pi_\nu\subset\Pi^0$.

\begin{lemma}[Uniformity on shrinking action boxes]
\label{lem:uniform-shrinking-boxes}
Let $\alpha_\nu>0$ satisfy $\alpha_\nu\to0$ as $\nu\downarrow0$.
For all sufficiently small $\nu$, the smallness constant in P\"oschel's
Corollary~C and the constant in the measure estimate
\eqref{eq:Poschel-measure} can be chosen independently of $\nu$ for the
domains $\Pi_\nu=[\nu,2\nu]^b$.
\end{lemma}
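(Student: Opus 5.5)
The plan is to trace through Pöschel's proof of Theorem~B and Corollary~C, listing every quantity on which the smallness constant $\gamma$ and the measure constant in \eqref{eq:Poschel-measure} depend. For each of them we check that it is controlled by data attached to the fixed reference box $\Pi^0=[0,2\nu_*]^b$ rather than to $\Pi_\nu$. By \cref{thm:Poschel}, the constants depend on $b,p,\bar p,d,\delta,\tau,s$, on $M,L$, on the asymptotic sequence $\bar\Omega$, and on the finitely many affine nondegeneracy constants of Theorem~B. We take $p=\bar p=1$, $d=2$, a fixed $\delta\in(0,1)$, a fixed $\tau$ and a fixed $s$, so none of these vary with $\nu$. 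By \eqref{eq:correct-M-L}, $M=\mathfrak M$ and $L=\mathfrak L$ bound the Lipschitz data on $\Pi^0$, hence on every $\Pi_\nu\subset\Pi^0$, because sup norms and Lipschitz seminorms can only decrease under restriction. By \cref{prop:P2}, $\bar\Omega$ is fixed independently of $\nu$.

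The real work is with the nondegeneracy constants and the resonance-zone measure estimate. For affine frequencies $\omega(\xi)=\lambda_J+A_J\xi$ and $\Omega_n(\xi)=\lambda_n+B_n\cdot\xi$, each resonance function $f_{k,l}$ is affine. Pöschel's measure estimate bounds the measure of
\[
 \mathcal R_{kl}=\{\xi\in\Pi:|f_{k,l}(\xi)|<\alpha\langle l\rangle_d/A_k\}
\]
by $(\operatorname{diam}\Pi)^{b-1}$ times the width of the slab divided by the size of the gradient. When the gradient is small, he uses instead that the constant term is bounded below. So we split the pairs $(k,l)$ with $|l|\le2$ into two classes. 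In the first class, $|A_J^{\mathsf T}k+\sum l_nB_n|\ge c|k|$; these are handled by the gradient, and since $\|A_J^{-1}\|$ is fixed, this holds uniformly once $|k|$ exceeds a threshold that depends only on $C_B$ and $\|A_J^{-1}\|$. In the second class, $|k|$ is bounded, and there are the finitely many "low" $l$ for which $|k\cdot\lambda_J+l\cdot\lambda|$ is not large. For these finitely many pairs, \eqref{eq:all-integer-nr} makes the constant term nonzero. Taking $\nu$ small, say $4(\|A_J\|\,|k|+2C_B)\nu<\tfrac12\min|k\cdot\lambda_J+l\cdot\lambda|$ over that finite set, gives $|f_{k,l}|\ge c_0>0$ on all of $\Pi_\nu$. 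Once $\alpha_\nu<c_0$ these resonance zones are empty, so they contribute nothing to the measure. For large $|l\cdot\lambda|$, the asymptotics \eqref{eq:rough-growth} together with $|k\cdot\omega|\le \mathfrak M|k|$ show that only $|k|\gtrsim|l\cdot\lambda|$ can resonate. This is the same reduction Pöschel uses for $d>1$, with constants given by $\mathfrak M$, the growth constants and $d_\omega$, and all of these are fixed.

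The main obstacle is ensuring that no constant of Theorem~B secretly depends on a lower bound for $\operatorname{diam}\Pi$ or on the measure $|\Pi|$. For example, the conversion $|\Pi\setminus\Pi_\alpha|\le\sum|\mathcal R_{kl}|$ must produce the factor $(\operatorname{diam}\Pi)^{b-1}$ and not $|\Pi|$ times a fixed constant. I would check this directly. The slab estimate $|\{\xi\in\Pi:|a+v\cdot\xi|<\eta\}|\le 2\eta|v|^{-1}(\operatorname{diam}\Pi)^{b-1}\cdot\mathrm{const}(b)$ holds for any box, with a constant depending only on $b$. I would also check that the KAM iteration itself works on an arbitrary compact $\Pi$ with only $M,L$ entering, which holds because Lipschitz extensions (Kirszbraun/McShane) preserve the norms. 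Taken together, this shows that for all $\nu$ below a threshold depending on $(\omega,J,\delta)$ and for $\alpha_\nu$ below $c_0$, the constants $\gamma$ and $C$ can be chosen once and for all.
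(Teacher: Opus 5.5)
Your proposal is correct in substance and rests on the same decisive idea as the paper. The only labels that could spoil a $\nu$-independent constant are finitely many affine resonance functions whose gradient may be small. For these, \eqref{eq:all-integer-nr} gives $f_{k,l}(0)\ne0$, so they stay above some $c_0>0$ on $[0,2\nu_1]^b$, and their zones are empty once $\alpha_\nu$ is small.

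The routes differ in how everything else is handled. The paper does not redo the measure estimate. It applies P\"oschel's Theorem~B once on the reference box $\Pi^0$, takes the resulting finite exceptional set $\mathcal X_0$ and constant $C_0$, and uses the domain monotonicity of Theorem~B to reuse them on every $\Pi_\nu$, with $\mathcal X_\nu\subset\mathcal X_0$. You instead re-derive the affine estimate directly. You split into labels with $|k|\ge K_*$, where $|A_J^{\mathsf T}k+\sum_n l_nB_n|\ge c|k|$ with $c,K_*$ depending only on $\norm{A_J^{-1}}$ and $C_B$, and a finite residual class. Your version makes the $\nu$-independence explicit and does not depend on how Theorem~B's exceptional set behaves under restriction. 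The price is that you take over P\"oschel's iteration bookkeeping yourself.

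That bookkeeping contains one step you have not addressed. In P\"oschel's scheme, the zones excluded at steps $j\ge1$ are defined with the perturbed frequencies $\omega_j,\Omega_j$, not with the affine $f_{k,l}$. So your emptiness argument for the residual class, and your slab estimate, are only established at step zero.

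The paper closes this by choosing the Corollary~C smallness constant once so that the initial cutoff satisfies $K_0\ge\max_{\mathcal X_0}|k|$. Exceptional labels are then excluded only at step zero, with unperturbed frequencies. You have two ways to close it:
\begin{enumerate}[label=\textup{(\alph*)}]
\item Adopt the same device, taking $K_0\ge K_*$.
\item Use P\"oschel's frequency-drift estimates: the drift is at most $c\,\eps_P\le c\gamma\alpha_\nu$ in sup norm and at most $c\gamma\mathfrak M$ in Lipschitz seminorm. The perturbed functions for your finitely many residual labels then stay above $c_0/2$ once $\nu$ is small. For $|k|\ge K_*$ the gradient bound $c|k|/2$ survives, provided $\gamma$ is small in terms of the fixed data $\mathfrak M,\mathfrak L$.
\end{enumerate}
Either way the constants remain independent of $\nu$, but this step should be stated explicitly.
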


\begin{proof}
\medskip
\noindent\textit{Step 1: choice of constants in the geometric estimate.}
Apply the construction in P\"oschel's Theorem~B on $\Pi^0$, with the
fixed bounds $\mathfrak M,\mathfrak L$ and the fixed spectral asymptotic
data.  Its conclusion separates a finite exceptional set of resonance
labels, denoted by $\mathcal X_0$, from the remaining labels.
For $d>1$, the union of resonance zones with nonexceptional labels
satisfies a bound of the form
$C_0(\operatorname{diam}\Pi)^{b-1}\alpha$.
The domain-monotonicity statement in that theorem means that the
constant $C_0$ and the exceptional set do not increase when $\Pi^0$
is restricted to a closed subset.  In particular, on $\Pi_\nu$ we may
use $C_0$ and an exceptional set
$\mathcal X_\nu\subset\mathcal X_0$.
The lower bounds for pure normal resonances needed in that construction
are also uniform under restriction, by the proof of (P1).

The finitely many exceptional labels require separate treatment, since their contribution is not included in the preceding estimate.
If $\mathcal X_0$ is empty, Steps~2 and~3 below are unnecessary.

\medskip
\noindent\textit{Step 2: lower bounds for the exceptional affine functions near zero.}
For the affine resonance functions $f_{k,l}$ from the proof of
\cref{prop:P1}, we have $f_{k,l}(0)\ne0$ for every
$(k,l)\in\mathcal X_0$.
Since $\mathcal X_0$ is finite, the constants
\[
 c_0=\frac12\min_{(k,l)\in\mathcal X_0}|f_{k,l}(0)|>0,
 \qquad
 D_0=\max_{(k,l)\in\mathcal X_0}
              \sum_{a=1}^b|\partial_{\xi_a}f_{k,l}|<\infty
\]
are well defined.  Since the functions are affine, their derivatives are independent of $\xi$.  Choose $0<\nu_1\le\nu_*$ with $2D_0\nu_1\le c_0$; if
$D_0=0$, no additional restriction is needed.  Then
\[
 |f_{k,l}(\xi)|
 \ge |f_{k,l}(0)|-D_0|\xi|_\infty
 \ge2c_0-2D_0\nu_1\ge c_0
\]
for every label in $\mathcal X_0$ and every
$\xi\in[0,2\nu_1]^b$.

\medskip
\noindent\textit{Step 3: exclusion of the exceptional zeroth-step zones.}
Choose the smaller smallness constant from P\"oschel's Corollary~C
once, using $\mathcal X_0$.  This choice ensures that the initial
Fourier cutoff $K_0$ is at least
$\max_{(k,l)\in\mathcal X_0}|k|$.  In the exclusion scheme of that
corollary, a label with $|k|\le K_0$ can occur only at step zero.
Thus the only zones to consider for labels in
$\mathcal X_\nu\subset\mathcal X_0$ use the unperturbed frequencies:
\[
 \mathcal R^0_{k,l}(\alpha_\nu)
 =\left\{\xi\in\Pi_\nu:
 |f_{k,l}(\xi)|<\alpha_\nu\frac{\langle l\rangle_d}{A_k}\right\},
\]
where, with $\mathcal N=\{n_1<n_2<\cdots\}$,
\[
 \langle l\rangle_d=\max\left(1,
               \left|\sum_{r\ge1}r^d l_{n_r}\right|\right),
 \qquad A_k=1+|k|^\tau,
 \qquad d=2.
\]
All these weights are fixed numbers for each exceptional label.  Set
\[
 C_{\mathcal X}=\max_{(k,l)\in\mathcal X_0}
                         \frac{\langle l\rangle_d}{A_k}<\infty.
\]
For sufficiently small $\nu<\nu_1$, the hypothesis
$\alpha_\nu\to0$ gives $\alpha_\nu C_{\mathcal X}<c_0$.
The lower bound from Step~2 implies that all these zones are empty.

\medskip
\noindent\textit{Step 4: uniformity of the constants.}
The union of all remaining excluded zones has measure at most
\[
 C_0(\operatorname{diam}\Pi_\nu)^{b-1}\alpha_\nu
\]
by Step~1.  The smaller smallness constant was chosen using a single
finite set and fixed frequency bounds, so it is independent of $\nu$.
All other smallness restrictions in Theorem~B can be made on the same
reference box and remain valid for sufficiently small $\alpha_\nu$.
The constants $c_0,D_0$ and $C_{\mathcal X}$ may depend on the fixed
path and $J$, but enter only the smallness threshold for $\nu$.  This is the
uniformity required in the quenched construction.
\end{proof}

\paragraph{Scaled perturbation estimates.}
We use the domain $D(s,r)$ and the scaled norm
\eqref{eq:scaled-norm-general} with $p=\bar p=1$.
Fix
\begin{equation}\label{eq:beta-sigma}
 0<\beta<\sigma<\frac13
\end{equation}
and put
\begin{equation}\label{eq:scales}
 r_\nu=\nu^{1/2+\sigma},
 \qquad
 \alpha_\nu=\nu^{1+\beta}.
\end{equation}
For all sufficiently small $\nu$, $r_\nu^2\le\nu/4$, so the square roots in \eqref{eq:action-angle} are holomorphic on the complex phase domain, uniformly for $\xi\in\Pi_\nu$.

\begin{lemma}[Normal-degree multilinear bounds]\label{lem:normal-degree-bounds}
Let $w=(q_{j_a})_{a=1}^b$, let $z=(q_n)_{n\in\mathcal N}$, and treat conjugate variables independently.  Put
\[
 a_J=\abs w+\abs{\bar w},
 \qquad
 \zeta=\norm z_1+\norm{\bar z}_1.
\]
Then
\begin{align}
 \abs{\partial_wP_4^{\ge3}}
 +\abs{\partial_{\bar w}P_4^{\ge3}}
 &\le C_{\omega,J}\zeta^3,
 \label{eq:Pge3-tang-der}\\
 \norm{\partial_zP_4^{\ge3}}_1
 +\norm{\partial_{\bar z}P_4^{\ge3}}_1
 &\le C_{\omega,J}(a_J\zeta^2+\zeta^3).
 \label{eq:Pge3-normal-der}
\end{align}
\end{lemma}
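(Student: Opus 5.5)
The plan is to work directly from the physical-space definition \eqref{eq:P4-r}. Write $u_T=\mathcal U_Jw$, $u_N=\mathcal U_{\mathcal N}z$, and similarly for $v$. Here $\mathcal U_J$ and $\mathcal U_{\mathcal N}$ denote the restrictions of the spectral transform $\mathcal U$ from the proof of \cref{prop:P4-analytic}. Since $J$ is finite and the eigenfunctions $\phi_{j_a}$ lie in $H^1_0$, we have $\norm{u_T}_{H^1}+\norm{v_T}_{H^1}\le C_{\omega,J}a_J$. Boundedness of $\mathcal U$ on the normal block gives $\norm{u_N}_{H^1}+\norm{v_N}_{H^1}\le C_\omega\zeta$. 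By \eqref{eq:P4-r}, $P_4^{\ge3}$ is a finite sum of terms $\int_0^\pi f_1f_2f_3f_4\dd x$. Each factor is one of $u_T,u_N,v_T,v_N$, and at least three factors are normal. This holds because $r\in\{3,4\}$ with $\alpha,\beta\le2$ forces at least three normal factors.

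For \eqref{eq:Pge3-tang-der}, differentiate a term with respect to $w_a$ (or $\bar w_a$). This replaces one tangential factor by $\phi_{j_a}$; terms with $r=4$ contain no tangential factor and contribute nothing. The surviving terms have $r=3$, so what remains is $\int\phi_{j_a}f_2f_3f_4$ with three normal factors. The continuous four-linear form estimate from Step~5 of \cref{prop:eigen-analytic} bounds this by $C\norm{\phi_{j_a}}_{H^1}\prod\norm{f_i}_{H^1}\le C_{\omega,J}\zeta^3$. Summing over the finitely many $a$ and terms gives the claim.

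For \eqref{eq:Pge3-normal-der}, fix a term and differentiate with respect to $z$. The coordinate gradient is the sequence $\bigl(\int g\,\phi_n\dd x\bigr)_{n\in\mathcal N}$, where $g$ is a product of the three remaining factors. In \cref{prop:P4-analytic} such a gradient was identified with $\Pi_{\mathcal N}\,\mathcal U^{-1}g$. We have $g\in H^1_0$ by the algebra property of $H^1$ together with vanishing at the endpoints. The bounded maps $\mathcal U^{-1}$ and $\Pi_{\mathcal N}$ then yield the bound $\norm{\partial_zP_4^{\ge3}}_1\le C_\omega\norm g_{H^1}\le C\prod_{i=1}^3\norm{f_i}_{H^1}$.

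The main point is to count the normal factors remaining in $g$. Removing one normal factor from a term with at least three normal factors leaves at least two. The remaining product therefore has the form (tangential)$\times$(normal)$^2$, or (normal)$^3$. It cannot carry two tangential factors: that would require the original term to have normal degree at most two. The bound is thus $C_{\omega,J}(a_J\zeta^2+\zeta^3)$. The same holds for $\partial_{\bar z}$.

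The only genuinely delicate step is justifying the gradient identification and the $h^1$ bound on the normal component. This is handled exactly as in \cref{prop:P4-analytic}. One first differentiates in individual coordinates and then extends by continuity, using the norm equivalence \eqref{eq:spectral-form-norm} with the spectral asymptotics, so that $\mathcal U^{-1}:H^1_0\to h^1$ is bounded. Constants depend on the path through this equivalence and on $J$ through $\max_a\norm{\phi_{j_a}}_{H^1}$.
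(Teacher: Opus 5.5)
Your proposal is correct and follows essentially the same route as the paper. You bound the tangential and normal physical components, differentiate the physical-space integrals from \eqref{eq:P4-r}, and control the normal gradients in $h^1$ via the trilinear $H^1$ multiplication estimate together with the bounded inverse spectral transform and normal projection. The only difference is presentational: the paper writes out $P_4^{(3)}$, $P_4^{(4)}$ and the gradient functions $G_u,G_v$ explicitly, whereas you obtain the same factor structure by counting normal factors.
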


\begin{proof}
The spectral transform and its projections give
\[
 \norm{u_T}_{H^1}+\norm{v_T}_{H^1}\le C_{\omega,J}a_J,
 \qquad
 \norm{u_N}_{H^1}+\norm{v_N}_{H^1}\le C_\omega\zeta.
\]
Expanding the binomial coefficients in \eqref{eq:P4-r} at normal
degrees three and four gives the explicit formulas
\[
 \begin{aligned}
 P_4^{(3)}&=\kappa\int_0^\pi
            (u_Tu_Nv_N^2+u_N^2v_Tv_N)\dd x,\\
 P_4^{(4)}&=\frac\kappa2\int_0^\pi u_N^2v_N^2\dd x.
 \end{aligned}
\]
Tangential differentiation gives
\[
 \partial_{w_a}P_4^{\ge3}
    =\kappa\int_0^\pi\phi_{j_a}u_Nv_N^2\dd x,
 \qquad
 \partial_{\bar w_a}P_4^{\ge3}
    =\kappa\int_0^\pi\phi_{j_a}u_N^2v_N\dd x.
\]
Each integral is bounded by $C_{\omega,J}\zeta^3$.
There are only $b$ tangential components, proving
\eqref{eq:Pge3-tang-der}.

To estimate the normal gradients in $h^1$, differentiate in physical
space first.  The functions paired with variations of $u_N$ and $v_N$
are, respectively,
\[
 \begin{aligned}
 G_u&=\kappa(u_Tv_N^2+2u_Nv_Tv_N+u_Nv_N^2),\\
 G_v&=\kappa(2u_Tu_Nv_N+u_N^2v_T+u_N^2v_N).
 \end{aligned}
\]
All products belong to $H^1_0$.  By the trilinear multiplication
estimate proved in \cref{prop:P4-analytic},
\[
 \norm{G_u}_{H^1}+\norm{G_v}_{H^1}
 \le C_{\omega,J}(a_J\zeta^2+\zeta^3).
\]
Their normal spectral coefficients are exactly
$\partial_zP_4^{\ge3}$ and $\partial_{\bar z}P_4^{\ge3}$.
Applying the bounded inverse spectral transform and normal projection
gives \eqref{eq:Pge3-normal-der} in $h^1$.
\end{proof}

\begin{lemma}[Quartic perturbation estimate]\label{lem:quartic-estimate}
On $D(s,r_\nu)\times\Pi_\nu$,
\begin{align}
 \norm{X_{P_{yy}+P_{yz^2}+P_4^{\ge3}}}_{r_\nu}
 &\le C_{\omega,J}\nu^{1+\sigma},
 \label{eq:quartic-sup}\\
 \norm{X_{P_{yy}+P_{yz^2}+P_4^{\ge3}}}^{\Lip}_{r_\nu}
 &\le C_{\omega,J}\nu^\sigma.
 \label{eq:quartic-lip}
\end{align}
\end{lemma}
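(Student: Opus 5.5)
The plan is to bound the three pieces $P_{yy}$, $P_{yz^2}$ and $P_4^{\ge3}$ separately in the scaled norm \eqref{eq:scaled-norm-general} with $p=\bar p=1$, $r=r_\nu=\nu^{1/2+\sigma}$. Everything rests on two elementary facts about the phase domain. First, on $D(s,r_\nu)\times\Pi_\nu$ we have $\zeta=\norm z_1+\norm{\bar z}_1<r_\nu$ and $|y|<r_\nu^2\le\nu/4$. Second, since $\xi_a\in[\nu,2\nu]$, the quantity $|\xi_a+y_a|$ lies between $3\nu/4$ and $3\nu$, and $|e^{\mp ix_a}|\le e^{s}$. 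The action--angle formulas \eqref{eq:action-angle} therefore give
\[
 a_J\le C_s\sqrt\nu,\qquad
 |\partial_{x_a}q_{j_a}|\le C_s\sqrt\nu,\qquad
 |\partial_{y_a}q_{j_a}|+|\partial_{\xi_a}q_{j_a}|\le C_s\nu^{-1/2}.
\]
The same bounds hold for $\bar q_{j_a}$. The constants $C_{\omega,J}$ come from $\norm{A_J}$, from \eqref{eq:Bn-bound}, and from \cref{lem:normal-degree-bounds}.

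\emph{The two explicit terms.} For $P_{yy}$, the only nonzero component is $X^x=A_Jy$, and it is bounded by $Cr_\nu^2=C\nu^{1+2\sigma}$. For $P_{yz^2}$ there are two components to check.
\begin{itemize}[label=--]
\item $X^x=\sum_n B_nz_n\bar z_n$. By \eqref{eq:Bn-bound} and $\norm z_{\ell^2}\le\norm z_1$, it is bounded by $C\zeta^2\le Cr_\nu^2$.
\item The normal components are $\bigl((B_n\cdot y)\bar z_n\bigr)_n$ and its analogue. Their $h^1$ norm is at most $C|y|\zeta\le Cr_\nu^3$, so after the weight $r_\nu^{-1}$ they contribute $Cr_\nu^2$.
\end{itemize}
Neither term depends on $\theta$ or on $\xi$. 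Since $A_J$ and $B_n$ are fixed on the chosen path, their Lipschitz seminorms in $\xi$ vanish.

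\emph{The term $P_4^{\ge3}$.} Here I would use the chain rule through $w=(q_{j_a})$ together with \cref{lem:normal-degree-bounds}.
\begin{itemize}[label=--]
\item $X^x=\partial_yP_4^{\ge3}$. By \eqref{eq:Pge3-tang-der} it is bounded by $C\zeta^3\nu^{-1/2}\le Cr_\nu^3\nu^{-1/2}=C\nu^{1+3\sigma}$.
\item $X^y=-\partial_xP_4^{\ge3}$. It is bounded by $C\zeta^3\sqrt\nu$. After the weight $r_\nu^{-2}$ this gives $Cr_\nu\sqrt\nu=C\nu^{1+\sigma}$.
\item The normal components. By \eqref{eq:Pge3-normal-der} they are bounded by $C(a_J\zeta^2+\zeta^3)$. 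After the weight $r_\nu^{-1}$ this gives $C(\sqrt\nu\,r_\nu+r_\nu^2)\le C\nu^{1+\sigma}$.
\end{itemize}
Collecting the three pieces gives \eqref{eq:quartic-sup}. The worst contributions, $\nu^{1+\sigma}$, come from $X^y$ and from the normal part of $P_4^{\ge3}$.

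\emph{The Lipschitz estimate.} This is the step requiring care, and I expect it to be the main obstacle. A Cauchy estimate in $y$ is not usable: it would cost $r_\nu^{-2}$ rather than $\nu^{-1}$. Instead I would differentiate explicitly in $\xi$. Every component of $X_{P_4^{\ge3}}$ is a finite sum of terms of the following shape:
\begin{itemize}[label=--]
\item a multilinear expression in $(w,\bar w)$ of degree at most $1$, namely $P_4^{(3)}$ is linear in tangential variables and $P_4^{(4)}$ has none;
\item normal factors;
\item possibly one chain-rule factor $\partial_xw$ or $\partial_yw$.
\end{itemize}
Each of these $\xi$-dependent factors is a power $(\xi_a+y_a)^{\pm1/2}$ times an exponential. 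Its $\xi$-derivative therefore equals the factor itself multiplied by $O\bigl((\xi_a+y_a)^{-1}\bigr)=O(\nu^{-1})$. Applying this termwise, and reusing the multilinear bounds of \cref{lem:normal-degree-bounds} with the same $\zeta$-powers, gives
\[
 \sup_{D(s,r_\nu)}\norm{\partial_\xi X_{P_4^{\ge3}}}_{r_\nu}\le C\nu^{-1}\cdot\nu^{1+\sigma}.
\]
Since $\Pi_\nu$ is convex, this derivative bound yields the Lipschitz seminorm bound $C\nu^\sigma$, and together with the vanishing Lipschitz seminorms of $P_{yy}$ and $P_{yz^2}$ this is \eqref{eq:quartic-lip}.

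The point to double-check in this step is uniformity. The terms $\partial_xP_4^{(3)}$ and $\partial_yP_4^{(3)}$ contain the product of $\partial w$ with an integral linear in the normal variables. Both factors must be differentiated, and each loses only one factor $\nu^{-1}$. The domain condition $r_\nu^2\le\nu/4$ keeps $\xi+y$ uniformly away from $0$, so no worse loss occurs.
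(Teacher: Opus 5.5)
Your proposal is correct and follows essentially the paper's route. The paper likewise bounds the $\xi$-independent fields of $P_{yy}+P_{yz^2}$ directly by $Cr_\nu^2$, transfers \cref{lem:normal-degree-bounds} to $P_4^{\ge3}$ through the action--angle chain rule, and then differentiates explicitly in $\xi$; its bookkeeping $\nu^{-1/2}\to\nu^{-3/2}$ for the amplitudes and their $y$-derivatives is exactly your observation that each $\xi$-dependent factor loses one factor $O(\nu^{-1})$.
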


\begin{proof}
Write $r=r_\nu$ and take $0<\nu<1$ sufficiently small that
$r^2\le\nu/4$.  Constants below are independent of $\nu$; the strip
width $s$ is fixed.

\medskip
\noindent\textit{Step 1: the terms polynomial in $y,z,\bar z$.}
The fields of $P_{yy}+P_{yz^2}$ have components
\[
 \begin{aligned}
 X^x&=A_Jy+\sum_{n\in\mathcal N}B_nz_n\bar z_n,
 &X^y&=0,\\
 X^z_n&=-i(B_n\cdot y)z_n,
 &X^{\bar z}_n&=i(B_n\cdot y)\bar z_n.
 \end{aligned}
\]
The sum in $X^x$ converges absolutely by Cauchy--Schwarz and the uniform
bound on $B_n$.  Since $|y|<r^2$ and
$\norm z_1+\norm{\bar z}_1<r$, we obtain
\[
 |X^x|\le Cr^2,\qquad
 \norm{X^z}_1+\norm{X^{\bar z}}_1\le Cr^3.
\]
Thus their scaled norm is at most $Cr^2$.
These expressions do not depend on $\xi$, so their parameter
Lipschitz seminorm is zero.

\medskip
\noindent\textit{Step 2: the remaining quartic terms.}
For every tangential index,
\[
 \frac34\nu\le|\xi_a+y_a|\le\frac94\nu,
 \qquad |e^{\pm ix_a}|\le e^s.
\]
The chosen square roots and their reciprocals are holomorphic, and
\[
 |q_{j_a}|+|\bar q_{j_a}|\le C\sqrt\nu,\quad
 |\partial_{y_a}q_{j_a}|+|\partial_{y_a}\bar q_{j_a}|
       \le C\nu^{-1/2},\quad
 |\partial_{x_a}q_{j_a}|+|\partial_{x_a}\bar q_{j_a}|
       \le C\sqrt\nu.
\]
In particular $a_J\le C\sqrt\nu$ and $\zeta\le r$.
Applying the chain rule to \cref{lem:normal-degree-bounds} gives
\[
 \begin{aligned}
 |X^x_{P_4^{\ge3}}|&\le Cr^3\nu^{-1/2},\\
 r^{-2}|X^y_{P_4^{\ge3}}|&\le C\sqrt\nu\,r,\\
 r^{-1}(\norm{X^z_{P_4^{\ge3}}}_1
                +\norm{X^{\bar z}_{P_4^{\ge3}}}_1)
     &\le C(\sqrt\nu\,r+r^2).
 \end{aligned}
\]
Together with Step~1 this proves
\[
 \norm{X_{P_{yy}+P_{yz^2}+P_4^{\ge3}}}_r
 \le C(r^2+\sqrt\nu\,r+r^3/\sqrt\nu)
 =C(\nu^{1+2\sigma}+\nu^{1+\sigma}+\nu^{1+3\sigma})
 \le C\nu^{1+\sigma}.
\]

\medskip
\noindent\textit{Step 3: parameter derivatives and the Lipschitz bound.}
Only $P_4^{(3)}$ depends on $\xi$.  Its tangential amplitudes have the
form $(\xi_a+y_a)^{1/2}e^{\pm ix_a}$; their parameter derivatives have
size $C\nu^{-1/2}$.  The $x$-component of the vector field already
contains one $y$-derivative of such an amplitude.  Its additional
parameter derivative therefore has size $C\nu^{-3/2}$.
Using the same cubic normal factors as in Step~2, we obtain
\[
 \begin{aligned}
 |\partial_\xi X^x|&\le Cr^3\nu^{-3/2},\\
 r^{-2}|\partial_\xi X^y|&\le Cr\nu^{-1/2},\\
 r^{-1}(\norm{\partial_\xi X^z}_1
                   +\norm{\partial_\xi X^{\bar z}}_1)
       &\le Cr\nu^{-1/2}.
 \end{aligned}
\]
The derivative norm includes all $b$ parameter directions.  Since
$\Pi_\nu$ is convex, these bounds also control the parameter Lipschitz
seminorm at each fixed phase point.
Consequently
\[
 \norm{X_{P_{yy}+P_{yz^2}+P_4^{\ge3}}}^{\Lip}_r
 \le C(r\nu^{-1/2}+r^3\nu^{-3/2})
 =C(\nu^\sigma+\nu^{3\sigma})\le C\nu^\sigma.
\]
\end{proof}

\begin{lemma}[Sixth-order remainder estimate]\label{lem:R6-estimate}
On $D(s,r_\nu)\times\Pi_\nu$,
\begin{align}
 \norm{X_{R_6^J}}_{r_\nu}
 &\le C_{\omega,J}\nu^{2-2\sigma},
 \label{eq:R6-sup-scaled}\\
 \norm{X_{R_6^J}}^{\Lip}_{r_\nu}
 &\le C_{\omega,J}\nu^{1-2\sigma}.
 \label{eq:R6-lip-scaled}
\end{align}
\end{lemma}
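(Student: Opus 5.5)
The plan is to transfer the bound \eqref{eq:R6-bound} from the Cartesian coordinates $(q,\bar q)$ to the action--angle domain $D(s,r_\nu)$, by the same chain-rule bookkeeping as in \cref{lem:quartic-estimate}.

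\emph{Step 1: size of the Cartesian variables.} On $D(s,r_\nu)\times\Pi_\nu$ we have $|q_{j_a}|+|\bar q_{j_a}|\le C\sqrt\nu$ and $\norm z_1+\norm{\bar z}_1<r_\nu\le\sqrt\nu$. Hence $w=(q,\bar q)$ satisfies $\norm w\le C\sqrt\nu$. For $\nu$ small, $w$ lies in the complex ball on which \cref{prop:partial-BNF} gives a holomorphic $X_{R_6^J}$ with
\[
 \norm{X_{R_6^J}(w)}\le C\nu^{5/2}.
\]
To handle parameter derivatives I would not differentiate $X_{R_6^J}$ directly. Instead I would use Cauchy estimates on a ball of radius comparable to $\sqrt\nu$ in the Cartesian variables. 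This gives
\[
 \norm{DX_{R_6^J}(w)}\le C\nu^{2},\qquad \norm{D^2X_{R_6^J}(w)}\le C\nu^{3/2}.
\]
Cauchy estimates apply because the remainder is analytic on a fixed ball containing all these points.

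\emph{Step 2: pulling back to action--angle components.} The Hamiltonian vector field in $(x,y,z,\bar z)$ has the following components:
\begin{itemize}
\item $X^x=\partial_yR_6$, whose size is at most $|\partial_{w}R_6|\,|\partial_y q_{j_a}|\le C\nu^{5/2}\nu^{-1/2}=C\nu^2$;
\item $X^y=-\partial_xR_6$, of size at most $C\nu^{5/2}\sqrt\nu=C\nu^3$;
\item the normal components, which are the normal parts of $X_{R_6^J}$ and have size at most $C\nu^{5/2}$.
\end{itemize}
With $r=\nu^{1/2+\sigma}$, the scaled norm is therefore bounded by
\[
 \nu^2+\nu^{3}r^{-2}+\nu^{5/2}r^{-1}=\nu^2+\nu^{2-2\sigma}+\nu^{2-\sigma}\le C\nu^{2-2\sigma},
\]
which is \eqref{eq:R6-sup-scaled}.

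\emph{Step 3: the Lipschitz bound.} The $\xi$-dependence enters only through $q_{j_a}=\sqrt{\xi_a+y_a}e^{\mp ix_a}$, and $\partial_\xi q_{j_a}=O(\nu^{-1/2})$. Differentiating each component in $\xi$ therefore costs at most a factor $C\nu^{-1}$ relative to Step 2. One factor comes from the Cauchy estimate, which loses $\nu^{-1/2}$, times the $\nu^{-1/2}$ from the amplitude. The other possible factor is $\partial_\xi\partial_y q\sim\nu^{-3/2}$ in the $x$-component. Thus the Lipschitz seminorm is bounded by $C\nu^{1-2\sigma}$, using convexity of $\Pi_\nu$ as in \cref{lem:quartic-estimate}.

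The main obstacle is Step 1's justification that the Cauchy estimates hold uniformly, with a ball of radius $\sim\sqrt\nu$ around each point staying inside the analyticity domain of $R_6^J$. This requires $\nu$ below a path- and $J$-dependent threshold, and that threshold is absorbed into $\nu_0$.
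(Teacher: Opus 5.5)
Your proposal is correct and follows essentially the paper's route. You bound $V=X_{R_6^J}$ and $DV$ by $C\nu^{5/2}$ and $C\nu^{2}$ via Cauchy estimates on $\{\norm{Q}\le C\sqrt\nu\}$. You then transfer these bounds to the $(x,y,z,\bar z)$ components by the chain rule; your $X^x=\partial_yR_6$ and $X^y=-\partial_xR_6$ coincide with the paper's formulas $X^{x_a}=\tfrac i2\bigl(V^{q_{j_a}}/q_{j_a}-V^{\bar q_{j_a}}/\bar q_{j_a}\bigr)$ and $X^{y_a}=\bar q_{j_a}V^{q_{j_a}}+q_{j_a}V^{\bar q_{j_a}}$. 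The $\nu^{-1}$ loss in the Lipschitz bound comes, as in the paper, from $\partial_\xi Q=O(\nu^{-1/2})$ and $\partial_\xi\partial_y q_{j_a}=O(\nu^{-3/2})$. Two minor points: the second-derivative bound you record is not needed, and the obstacle you flag is harmless, since for small $\nu$ all the Cauchy balls lie in a ball of radius $C\sqrt\nu$ inside the fixed analyticity domain.
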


\begin{proof}
In this proof $Q=(q,\bar q)$ denotes the complex spectral coordinates,
and $V(Q)=X_{R_6^J}(Q)$ denotes the vector field before the
action--angle substitution.  Analyticity and vanishing to order five
give, on a fixed sufficiently small ball,
\[
 \norm{V(Q)}\le C\norm Q^5,\qquad
 \norm{DV(Q)}\le C\norm Q^4.
\]
The derivative estimate follows from the Cauchy estimate on a ball
of radius proportional to $\norm Q$; at $Q=0$ it follows from the
vanishing Taylor coefficients.

On the action--angle domain,
$\norm Q\le C(\sqrt\nu+r_\nu)\le C\sqrt\nu$.
We therefore have
\[
 \norm{V(Q)}\le C\nu^{5/2},\qquad
 \norm{DV(Q)}\le C\nu^2.
\]
The vector-field components transform as follows.
For one tangential mode write $q=q_{j_a}$, $\bar q=\bar q_{j_a}$ and
$I=\xi_a+y_a=q\bar q$.  At fixed $\xi$, differentiating
$q=\sqrt I e^{-ix_a}$ and $\bar q=\sqrt I e^{ix_a}$ along a trajectory
and solving for the velocities gives
\[
 X^{x_a}=\frac i2\left(\frac{V^{q_{j_a}}}{q_{j_a}}
                       -\frac{V^{\bar q_{j_a}}}{\bar q_{j_a}}\right),
 \qquad
 X^{y_a}=\bar q_{j_a}V^{q_{j_a}}
                      +q_{j_a}V^{\bar q_{j_a}}.
\]
The normal components are unchanged.  On the fixed strip, the
tangential amplitudes have size at most $C\sqrt\nu$ and their
reciprocals at most $C\nu^{-1/2}$.  Thus
\[
 |X^x|\le C\nu^2,\qquad |X^y|\le C\nu^3,
 \qquad \norm{X^z}_1+\norm{X^{\bar z}}_1\le C\nu^{5/2}.
\]
With $r=r_\nu=\nu^{1/2+\sigma}$, the scaled norm is bounded by
\[
 C\left(\nu^2+\frac{\nu^3}{r^2}+\frac{\nu^{5/2}}r\right)
 =C(\nu^2+\nu^{2-2\sigma}+\nu^{2-\sigma})
 \le C\nu^{2-2\sigma},
\]
which proves \eqref{eq:R6-sup-scaled}.

For the parameter estimate, differentiation is at fixed
$(x,y,z,\bar z)$.  Only the finitely many tangential coordinates of
$Q$ depend on $\xi$, and
$\norm{\partial_{\xi_a}Q}\le C\nu^{-1/2}$.
The chain rule therefore gives
\[
 \norm{\partial_{\xi_a}(V(Q))}
 \le\norm{DV(Q)}\norm{\partial_{\xi_a}Q}
 \le C\nu^{3/2}.
\]
The derivatives of the tangential amplitudes have size
$C\nu^{-1/2}$, and those of their reciprocals have size
$C\nu^{-3/2}$.  Differentiating the two transformation formulas above yields
\[
 \begin{aligned}
 |\partial_\xi X^x|
   &\le C(\nu^{-3/2}\nu^{5/2}
                       +\nu^{-1/2}\nu^{3/2})\le C\nu,\\
 |\partial_\xi X^y|
   &\le C(\nu^{-1/2}\nu^{5/2}
                       +\nu^{1/2}\nu^{3/2})\le C\nu^2,\\
 \norm{\partial_\xi X^z}_1+\norm{\partial_\xi X^{\bar z}}_1
   &\le C\nu^{3/2}.
 \end{aligned}
\]
Integrating these bounds on segments in the convex parameter box gives
\[
 \norm{X_{R_6^J}}^{\Lip}_r
 \le C\left(\nu+\frac{\nu^2}{r^2}+\frac{\nu^{3/2}}r\right)
 =C(\nu+\nu^{1-2\sigma}+\nu^{1-\sigma})
 \le C\nu^{1-2\sigma}.
\]
\end{proof}

\begin{proposition}[Verification of (P3) and smallness]\label{prop:P3}
For $p=\bar p=1$, the perturbation \eqref{eq:Pnu} satisfies (P3) and
\begin{align}
 \norm{X_{P_\nu}}_{r_\nu}
 &\le C_{\omega,J}\nu^{1+\sigma},
 \label{eq:Pnu-sup}\\
 \norm{X_{P_\nu}}^{\Lip}_{r_\nu}
 &\le C_{\omega,J}\nu^\sigma.
 \label{eq:Pnu-lip}
\end{align}
Moreover,
\begin{equation}\label{eq:smallness-ratio}
 \frac1{\alpha_\nu}
 \left(
 \norm{X_{P_\nu}}_{r_\nu}
 +\frac{\alpha_\nu}{\mathfrak M}
 \norm{X_{P_\nu}}^{\Lip}_{r_\nu}
 \right)
 \longrightarrow0
 \qquad(\nu\downarrow0).
\end{equation}
\end{proposition}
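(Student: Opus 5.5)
The estimates \eqref{eq:Pnu-sup}--\eqref{eq:Pnu-lip} follow by adding \cref{lem:quartic-estimate} and \cref{lem:R6-estimate}, since $P_\nu=(P_{yy}+P_{yz^2}+P_4^{\ge3})+R_6^J$ by \eqref{eq:Pnu}. For the supremum norm this gives
\[
 \norm{X_{P_\nu}}_{r_\nu}\le C(\nu^{1+\sigma}+\nu^{2-2\sigma}),
\]
and since $\sigma<1/3$ implies $2-2\sigma>1+\sigma$, the first term dominates for $\nu<1$. For the Lipschitz norm we get
\[
 \norm{X_{P_\nu}}^{\Lip}_{r_\nu}\le C(\nu^\sigma+\nu^{1-2\sigma}),
\]
and $1-2\sigma>\sigma$ for the same reason. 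This is the only place where the restriction $\sigma<1/3$ enters.

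For (P3) with $p=\bar p=1$, I would check the phase-space regularity term by term. $P_{yy}$ and $P_{yz^2}$ are explicit polynomials. Their vector fields, computed in Step 1 of \cref{lem:quartic-estimate}, map into $\R^b\times\R^b\times h^1\times h^1$ analytically, by the uniform bound \eqref{eq:Bn-bound}. For $P_4^{\ge3}$, \cref{prop:P4-analytic} and \cref{lem:normal-degree-bounds} show that the vector field in spectral coordinates is a bounded cubic polynomial into $h^1\times h^1$. For $R_6^J$, \cref{prop:partial-BNF} gives an analytic vector field into $h^1\times h^1$ on a neighbourhood of the origin. Composing with the action--angle map \eqref{eq:action-angle} preserves analyticity on $D(s,r_\nu)$, because the square roots are holomorphic and bounded away from zero when $r_\nu^2\le\nu/4$. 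The image of the phase domain must lie inside the neighbourhood $U_{\omega,J}$ from \cref{prop:partial-BNF}. This holds for small $\nu$ because $\norm Q\le C\sqrt\nu$ there. Reality on the real subspace comes from the reality of $F_4^J$, and hence of $\Phi_J^1$. Lipschitz dependence on $\xi$ enters only through the tangential amplitudes, and is quantified by the Lipschitz bounds above.

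For the ratio \eqref{eq:smallness-ratio}, substitute $\alpha_\nu=\nu^{1+\beta}$. The first term is
\[
 \alpha_\nu^{-1}\norm{X_{P_\nu}}_{r_\nu}\le C\nu^{\sigma-\beta}\to0,
\]
since $\beta<\sigma$. The second term is $\mathfrak M^{-1}\norm{X_{P_\nu}}^{\Lip}_{r_\nu}\le C\nu^\sigma\to0$, and $\mathfrak M\ge1$ is fixed by \eqref{eq:correct-M-L}, independent of $\nu$.

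The only delicate step is making sure all constants are uniform in $\nu$ on $D(s,r_\nu)\times\Pi_\nu$. I would handle this with the reference box $\Pi^0$, which keeps $\mathfrak M$ and $\mathfrak L$ fixed, and with the fixed neighbourhood from \cref{prop:partial-BNF}; the exponent comparison itself is elementary.
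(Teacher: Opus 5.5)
Your proposal is correct and follows essentially the same route as the paper. You add the bounds of \cref{lem:quartic-estimate} and \cref{lem:R6-estimate}, use $\sigma<1/3$ to get $2-2\sigma>1+\sigma$ and $1-2\sigma>\sigma$, verify (P3) from \cref{prop:P4-analytic}, \cref{prop:partial-BNF}, \cref{lem:normal-degree-bounds} and the holomorphy of the action--angle substitution, and obtain the ratio bound $C\nu^{\sigma-\beta}+C\mathfrak M^{-1}\nu^{\sigma}\to0$ from $\beta<\sigma$. Your extra remarks (that the action--angle image lies in the Birkhoff neighbourhood for small $\nu$, and that $\Phi_J^1$ is real) are consistent with how the paper treats these points in Step~2 of the proof of \cref{thm:main}.
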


\begin{proof}
By \cref{prop:P4-analytic,prop:partial-BNF,lem:normal-degree-bounds},
the spectral-coordinate perturbations have analytic vector fields
in $h^1\times h^1$.  The action--angle substitution is holomorphic on
the chosen domains, and the preceding two lemmata give Lipschitz
dependence on $\xi$.  This verifies (P3) with $p=\bar p=1$.

By \cref{lem:quartic-estimate,lem:R6-estimate},
\[
 \begin{aligned}
 \norm{X_{P_\nu}}_{r_\nu}
 &\le C(\nu^{1+\sigma}+\nu^{2-2\sigma}),\\
 \norm{X_{P_\nu}}^{\Lip}_{r_\nu}
 &\le C(\nu^\sigma+\nu^{1-2\sigma}).
 \end{aligned}
\]
Since $\sigma<1/3$, the remainder terms have strictly larger exponents,
which proves \eqref{eq:Pnu-sup}--\eqref{eq:Pnu-lip} for $0<\nu<1$.

Finally $\alpha_\nu=\nu^{1+\beta}$, and hence
\[
 \begin{aligned}
 \frac1{\alpha_\nu}
 \left(\norm{X_{P_\nu}}_{r_\nu}
    +\frac{\alpha_\nu}{\mathfrak M}
              \norm{X_{P_\nu}}^{\Lip}_{r_\nu}\right)
 &\le C\nu^{\sigma-\beta}
       +\frac C{\mathfrak M}\nu^\sigma\longrightarrow0.
 \end{aligned}
\]
Here both exponents are positive because $0<\beta<\sigma$.
\end{proof}

\begin{lemma}[Weak symplectic conjugacy]\label{lem:weak-conjugacy}
Let $\Psi$ be a $C^1$ symplectic diffeomorphism between open subsets of
$h^1\times h^1$, and let $K=H\circ\Psi$.  Suppose that
$p\in C^1(\R;h^1\times h^1)$ satisfies
\begin{equation}\label{eq:weak-Hamilton-K}
 \Om_{\mathrm{symp}}(\dot p(t),V)
 =\dd K(p(t))[V]
 \qquad\text{for every }V\in h^1\times h^1.
\end{equation}
Then $q=\Psi\circ p$ satisfies
\begin{equation}\label{eq:weak-Hamilton-H}
 \Om_{\mathrm{symp}}(\dot q(t),W)
 =\dd H(q(t))[W]
 \qquad\text{for every }W\in h^1\times h^1.
\end{equation}
This statement only uses the continuity of the differential of $H$; it
does not require the formal vector field $X_H$ to take values in $h^1$.
\end{lemma}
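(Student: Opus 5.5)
The plan is a direct chain-rule computation that uses only the symplecticity of $\Psi$ and the differentiability of the composition $K=H\circ\Psi$. First, $q=\Psi\circ p$ belongs to $C^1(\R;h^1\times h^1)$ wherever $p(t)$ stays in the domain of $\Psi$, and
\[
 \dot q(t)=D\Psi(p(t))\dot p(t).
\]
Second, the chain rule for the continuous differential of $H$ gives, for every $V\in h^1\times h^1$,
\[
 \dd K(p(t))[V]=\dd H(q(t))\bigl[D\Psi(p(t))V\bigr].
\]
Only $\dd H$, a continuous linear functional on $h^1\times h^1$, enters here. The formal vector field $X_H$ is never needed.

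Third, symplecticity of $\Psi$ means $\Om_{\mathrm{symp}}(D\Psi(p)U,D\Psi(p)V)=\Om_{\mathrm{symp}}(U,V)$. Combining this with \eqref{eq:weak-Hamilton-K} yields
\[
 \Om_{\mathrm{symp}}\bigl(\dot q(t),D\Psi(p(t))V\bigr)
 =\Om_{\mathrm{symp}}(\dot p(t),V)
 =\dd K(p(t))[V]
 =\dd H(q(t))\bigl[D\Psi(p(t))V\bigr].
\]

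The final step is to pass from test vectors of the form $D\Psi(p(t))V$ to an arbitrary $W$. Since $\Psi$ is a $C^1$ diffeomorphism between open sets, $D\Psi(p(t))$ is a bounded bijection of $h^1\times h^1$ with bounded inverse, namely the differential of $\Psi^{-1}$ at $q(t)$. So for a given $W$ we set $V=D\Psi(p(t))^{-1}W$ and obtain \eqref{eq:weak-Hamilton-H}.

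The only point needing care is this surjectivity of $D\Psi(p(t))$. In the weak symplectic setting one cannot instead argue through nondegeneracy of $\Om_{\mathrm{symp}}$ alone, so the diffeomorphism hypothesis has to be used explicitly. Everything else is a routine verification.
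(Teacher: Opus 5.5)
Your proposal is correct and is essentially the paper's proof: the chain rule for $\dot q$ and for $\dd K$, invariance of $\Om_{\mathrm{symp}}$ under $D\Psi(p)$, and the choice $V=D\Psi(p(t))^{-1}W$. You are also right that invertibility of $D\Psi(p(t))$, supplied by the diffeomorphism hypothesis rather than by weak nondegeneracy of $\Om_{\mathrm{symp}}$, is the one step that needs care, and the paper uses the inverse differential at exactly that step.
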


\begin{proof}
The chain rule gives $\dot q=D\Psi(p)\dot p$.  For an arbitrary
$W\in h^1\times h^1$, set $V=D\Psi(p)^{-1}W$.  Symplecticity and
\eqref{eq:weak-Hamilton-K} yield
\[
 \Om_{\mathrm{symp}}(\dot q,W)
 =\Om_{\mathrm{symp}}(\dot p,V)
 =\dd K(p)[V]
 =\dd H(q)[D\Psi(p)V]
 =\dd H(q)[W].
\]
This is \eqref{eq:weak-Hamilton-H}.
\end{proof}

\begin{proof}[Proof of \cref{thm:main}]
\noindent\textit{Step 1: choice of the path and fixed constants.}
Set
\[
 \Omega_*=\Omega_{\mathrm{sp}}\cap\Omega_{\mathrm{nr}}
                         \cap\Omega_{\mathrm{tw}}.
\]
The preceding results show that $\Omega_*$ is measurable, has
probability one, and is independent of $J$.

Fix $\omega\in\Omega_*$, a finite nonempty $J$, and $0<\beta<1/3$.
Choose $\beta<\sigma<1/3$ and $0<\delta<1$; all these choices remain
fixed as $\nu$ decreases.  Fix also a strip width $s>0$ and a
Diophantine exponent $\tau\ge b+3$, which satisfies the geometric
summability requirement for $d=2$.  Construct the partial Birkhoff
map $\Phi_J^1$, the reference parameter box $\Pi^0$, and the frequency
bounds $\mathfrak M,\mathfrak L$ as above.

\medskip
\noindent\textit{Step 2: application of the abstract KAM theorem.}
Use the scales \eqref{eq:scales}.  For sufficiently small $\nu$, the
action--angle image of $D(s,r_\nu)$ lies in the domain of the Birkhoff
map, and the transformed Hamiltonian is $N_\xi+P_\nu$ up to an additive
constant.  By \cref{prop:P1,prop:P2,prop:P3}, the hypotheses of
\cref{thm:Poschel} hold with the fixed bounds \eqref{eq:correct-M-L}.
The uniformity lemma and \eqref{eq:smallness-ratio} give a threshold
$\nu_0=\nu_0(\omega,J,\beta)>0$ below which the KAM smallness condition
holds.

For every $0<\nu<\nu_0$, the KAM theorem supplies a Cantor parameter
set $\Pi_\nu^*\subset\Pi_\nu$ and, for each $\xi\in\Pi_\nu^*$, a real
analytic torus embedding $\iota_\xi$ in action--angle and normal
coordinates.  Its trajectories are
$\iota_\xi(\theta_0+\omega_*(\xi)t)$, where $\omega_*(\xi)$ is the
resulting Diophantine frequency vector.

\medskip
\noindent\textit{Step 3: measure estimate for the excluded actions.}
The frequency maps are affine and $d=2>1$, so the uniform geometric
estimate gives
\[
 |\Pi_\nu\setminus\Pi_\nu^*|
 \le C_{\omega,J}(\operatorname{diam}\Pi_\nu)^{b-1}\alpha_\nu
 \le C_{\omega,J}\nu^{b-1}\nu^{1+\beta}
 =C_{\omega,J}\nu^{b+\beta}.
\]
Since $\operatorname{diam}\Pi_\nu=\sqrt b\,\nu$ and
$|\Pi_\nu|=\nu^b$, division by $|\Pi_\nu|$ proves
\eqref{eq:relative-measure}.

\medskip
\noindent\textit{Step 4: conjugation to the original Hamiltonian.}
Let $\mathcal A_\xi$ be the action--angle map
\eqref{eq:action-angle}.  The calculation
\eqref{eq:AA-symplectic} shows that this map is symplectic in local
angle charts.  Since $\iota_\xi$ is analytic and its angle space is
finite dimensional, the curve
\[
 p(t)=\mathcal A_\xi\bigl(
                  \iota_\xi(\theta_0+\omega_*(\xi)t)\bigr)
\]
is $C^1$ with values in $h^1\times h^1$.  The KAM invariance equation,
paired with arbitrary tangent vectors and transferred through
$\mathcal A_\xi$, is precisely the weak Hamiltonian identity for
$\widetilde H=H\circ\Phi_J^1$.
Apply \cref{lem:weak-conjugacy} with $\Psi=\Phi_J^1$ and
$K=\widetilde H$.  It follows that
\[
 \theta\longmapsto
 \Phi_J^1\circ\mathcal A_\xi\circ\iota_\xi(\theta)
\]
is an invariant torus for the original spectral Hamiltonian $H$.
Its image under the bounded spectral transform belongs to
$H^1_0(0,\pi;\C)$, and the embedding is real analytic.

The smallness ratio in \eqref{eq:smallness-ratio} tends to zero, so the
KAM embedding estimates give a deformation tending to zero in the
scaled phase norm relative to the trivial torus
$(x,y,z,\bar z)=(\theta,0,0,0)$.  Its action--angle image is the linear
rotational torus with tangential amplitudes $\sqrt{\xi_a}$.
The Birkhoff map changes a spectral point of size $O(\sqrt\nu)$ by
$O(\nu^{3/2})$, by Step~1 of the normal-form proof.  Thus the resulting
torus is a small deformation of the linear torus asserted in the theorem.
All transformations preserve the conjugacy relation $\bar q=\overline q$.
P\"oschel's theorem gives an elliptic normal linearized system at each
torus.  Bounded real symplectic conjugacies preserve this linear
stability: along the compact torus their differentials and inverse
differentials are uniformly bounded and conjugate the corresponding
normal variational equations.

\medskip
\noindent\textit{Step 5: the PDE and its mild formulation.}
For the fixed path the form estimate defines a bounded map
\[
 A_\omega:H^1_0(0,\pi)\longrightarrow H^{-1}(0,\pi),
 \qquad
 \langle A_\omega u,v\rangle_{H^{-1},H^1_0}
     =\mathfrak a_{\sigma_\omega}[u,v].
\]
The one-dimensional product estimate also gives a continuous real analytic
map $u\mapsto|u|^2u$ from $H^1_0(0,\pi;\C)$ to itself, regarded as a
map between real Banach spaces.
Writing $q(t)=\Phi_J^1(p(t))$ with $\bar q(t)=\overline{q(t)}$ and
$u(t)=\sum_nq_n(t)\phi_n$, the weak Hamiltonian identity, tested in
individual spectral coordinates, gives
\[
 i\dot q_n(t)=\lambda_nq_n(t)
       +\kappa\int_0^\pi |u(t,x)|^2u(t,x)\phi_n(x)\dd x.
\]
Finite spectral sums are dense in $H^1_0$.  The boundedness of
$A_\omega:H^1_0\to H^{-1}$ and of the nonlinear term therefore
extends this coordinate identity to
\[
 iu_t=A_\omega u+\kappa|u|^2u
 \quad\text{in }H^{-1}(0,\pi).
\]
In particular the trajectories have the claimed regularity
$u\in C(\R;H^1_0)\cap C^1(\R;H^{-1})$.

For completeness, put $U(t)=e^{-itA_\omega}$.  In the shifted spectral
energy norm the factors $e^{-it\lambda_n}$ have modulus one, so $U(t)$
is a strongly continuous group preserving $H^1_0$ and extending to its
dual $H^{-1}$.  Variation of constants in each coordinate gives
\[
 u(t)=U(t)u(0)-i\kappa\int_0^tU(t-s)(|u(s)|^2u(s))\dd s.
\]
The integral is a Bochner integral in $H^1_0$, because its integrand
is continuous in that space.  Taking spectral coefficients proves
that this expression agrees with the trajectory just constructed.
Conversely, this mild identity differentiates in $H^{-1}$: the
generator acts boundedly from $H^1_0$ to $H^{-1}$, and the nonlinear
term is continuous in $H^1_0$.  It yields the displayed PDE and the
stated time regularity.  This proves the equivalence and completes
the theorem.
\end{proof}

\section{Discussion}

We discuss the path dependence of the KAM bounds and the additional
estimates required if the Brownian path is used as an external parameter.

\paragraph{Pathwise amplitude thresholds.}
For fixed $J$ and $\beta$, the conclusion has the quantifier order
\[
 \text{for almost every }\omega,\quad
 \text{there exists }\nu_0(\omega,J,\beta)>0,\quad
 \text{for every }0<\nu<\nu_0(\omega,J,\beta).
\]
The threshold depends on the spectral estimates, inverse twist, and
normal-form bounds.  The spectral-to-physical norm equivalence also
depends on the path, so actions of size $\nu$ give tori of size
$O_{\omega,J,\beta}(\sqrt\nu)$ in $H^1_0$.  Neither the threshold nor
the embedding bounds are asserted to be uniform in $\omega$ or $J$.

\paragraph{Uniform bounds and high-probability restrictions.}
There is a concrete obstruction to imposing a deterministic positive
nonresonance constant on a full-probability set.  Fix $n_0\notin J$ and
consider the deterministic path
\[
 B_*(x)=-\frac{n_0^2}{\rho}x.
\]
Its potential is the constant $-n_0^2$, so
$\lambda_{n_0}(B_*)=0$.  Classical Wiener measure has full support in
$E$; this follows from the density of its Cameron--Martin space and the
Gaussian support theorem, see~\cite{Bogachev1998}.  Continuity of the
ordered eigenvalues consequently implies
\[
 \mu\{B:|\lambda_{n_0}(B)|<\gamma\}>0
 \qquad\text{for every }\gamma>0.
\]
Thus the gap in \cref{lem:global-two-mode-gap} has no deterministic
positive lower bound on a full-probability set.  Fixed nonresonance
constants require a positive-probability exclusion.  This obstructs
uniform application of the present estimates, without implying
nonexistence of invariant tori near the excluded resonances.

A weaker uniform formulation is available on sets of arbitrarily high
probability.  For fixed $J$ and $\beta$, choose the positive threshold
$\nu_0$ and the finite constants in the estimates measurably, as is
possible from the spectral formulas and the countable suprema used in
the proof.  Restrict to increasing measurable sets on which $\nu_0$ is
bounded below and those constants are bounded above.  These sets exhaust
$\Omega_*$ up to a null set.  Hence, for every $\eta>0$, deterministic
thresholds and bounds can be chosen on a set of probability at least
$1-\eta$.  This observation gives no rate relating the admissible
amplitude to $\eta$.  Such a rate would require quantitative control of
the distributions of the path-dependent constants.

\paragraph{The Brownian path as an external parameter.}
An alternative construction would prescribe the tangential actions and
use variation of $B$ to adjust the frequencies.  The parameter exclusion
would then take place in Wiener space rather than action space.  At the
linear level the relevant resonance functions are
\[
 \mathcal R_{k,l}(B)
 =\sum_{a=1}^b k_a\lambda_{j_a}(B)
   +\sum_{n\notin J}l_n\lambda_n(B),
 \qquad (k,l)\ne0,\quad |l|\le2.
\]
At a fixed positive perturbation scale, such a construction would seek
a positive- or high-probability set on which quantitative nonresonance
conditions hold throughout the iteration.  The zero-set theorem alone
does not establish a full-probability result at that scale.

An almost-sure result with path-dependent thresholds is a different
question.  Sets whose probabilities approach one as the amplitude
vanishes do not alone yield an almost-sure statement valid at every
sufficiently small amplitude; additional control across scales is needed.

\paragraph{Quantitative sublevel estimates.}
To estimate exclusions in Wiener space, one needs bounds for small
values of resonance functions, not only the nullity of their zero sets.
For example, on a measurable set $G$ where the required spectral norms
are controlled, one could seek estimates of the form
\[
 \mu\bigl(G\cap\{B:|\mathcal R_{k,l}(B)|<t\}\bigr)
 \le C_{G,k,l}\,t^{a_{G,k,l}},\qquad 0<t<1,
\]
with positive exponents and sufficiently controlled dependence on the
resonance labels.  Separate estimates for each label are not enough:
after using the high-energy spectral bounds, the remaining exclusions
must be summable over the Fourier and normal indices and over the KAM
steps.  The estimates must also remain valid for the perturbed frequency
maps, which need not retain the analytic dependence of the original
spectral data.

The Feynman--Hellmann formula provides a starting point for such an
analysis:
\[
 D_{H_g}\mathcal R_{k,l}(B)
 =\rho\int_0^\pi g(x)
   \left(\sum_{a=1}^b k_a\phi_{j_a}(B,x)^2
        +\sum_{n\notin J}l_n\phi_n(B,x)^2\right)\dd x.
\]
A possible approach is to vary finitely many Cameron--Martin coordinates
conditionally on the others.  It would require quantitative invertibility
of the tangential frequency map, transversality bounds for mixed
resonances with controlled index dependence, and stability under the
KAM frequency corrections.  These estimates are not provided by the
qualitative zero-set argument.

\end{document}